\documentclass[10pt]{amsart}

\usepackage{amsmath, amsfonts, amssymb, amsthm, mathdots, mathrsfs, dsfont}

\usepackage{tikz}

\newtheorem{theorem}{Theorem}[section]

\newtheorem{proposition}[theorem]{Proposition}
\newtheorem{conjecture}[theorem]{Conjecture}
\newtheorem{corollary}[theorem]{Corollary}
\newtheorem{lemma}[theorem]{Lemma}
\newtheorem{remark}[theorem]{Remark}
\newtheorem{assumption}[theorem]{Assumption}

\numberwithin{equation}{section}

\newcommand\otheta{\overline{\theta}}
\newcommand\opsi{\overline{\psi}}

\begin{document}

\title[On rationality and holomorphy of Langlands-Shahidi $L$-functions]{On the rationality and holomorphy \\ of Langlands-Shahidi $L$-functions \\ over function fields}

\thanks{2010 \emph{Mathematics Subject Classification}. Primary 11F70, 22E50, 22E55.}

\author{Luis Alberto Lomel\'i}
\address{Luis Lomel\'i \\ Instituto de Matem\'aticas \\ PUCV \\ Valpara\'iso \\ Chile}
\email{luis.lomeli@pucv.cl}

\begin{abstract}
We prove three main results: all Langlands-Shahidi automorphic $L$-functions over function fields are rational; after twists by highly ramified characters our automorphic $L$-functions become polynomials; and, if $\pi$ is a globally generic cuspidal automorphic representation of a split classical group or a unitary group ${\bf G}_n$ and $\tau$ a cuspidal (unitary) automorphic representation of a general linear group, then $L(s,\pi \times \tau)$ is holomorphic for $\Re(s) > 1$ and has at most a simple pole at $s=1$. We also prove the holomorphy and non-vanishing of automorphic exterior square, symmetric square and Asai L-functions for $\Re(s) > 1$. Finally, we complete previous results on functoriality for the classical groups over function fields.
\end{abstract}

\maketitle

\section*{introduction}

The Langlands-Shahidi method provides a link between the theory of Eisenstein series and automorphic $L$-functions for globally generic representations. The method was first developed by Shahidi over number fields \cite{shahidi1990} and is now available in the case of function fields \cite{lsU}. We are guided by the analogy that exists in modern Number Theory between number fields and global function fields. Already present in Langlands' work \cite{langlandsES}, Eisenstein series over number fields have a meromorphic continuation. Over function fields, a result of Harder proves that Eisenstein series are rational \cite{harder1974}. Based on this, we prove that all Langlands-Shahidi automorphic $L$-functions over function fields are rational (Theorem~\ref{rationalL}).

The connection between Eisenstein series and the Langlands-Shahidi local coefficient is possible via the Whittaker model of a globally generic representation. Locally, at every unramified place, we have the Casselman-Shalika formula \cite{cs1980}. At ramified places, we find it useful to show in \S~\ref{Whittaker:local} that the local Whittaker functional of the corresponding parabolically induced representation is rational, in fact a Laurent polynomial (Theorem~\ref{Whittaker:Ind}); completing the proof of Theroem~\ref{rationalL}. We base ourselves on the proof of holomorphy in the case of principal series found in \cite{cs1980}; we found \cite{casselmanletter} very useful while working with questions on Whittaker functionals of $\mathfrak{p}$-adic groups.

In \S~\ref{intertwining} we collect several facts about $L$-functions and intertwining operators. We prove a local irreducibility result for induced representations and recall an assumption made by Kim concerning normalized intertwining operators.  We then, in \S~\ref{global:poly}, extend to function fields a result that Kim and Shahidi \cite{ks2002} proved for number fields, stating that Langlands-Shahidi $L$-functions become holomorphic after globally twisting by a highly ramified character. We follow Kim-Shahidi and refine this result in positive characteristic to show that our automorphic $L$-functions become Laurent polynomials (Proposition~\ref{twistLS}).

In \S~\ref{transfer} we recall the notion of the Kazhdan transfer between close local fields. We rely on the work of Ganapathy, who produces a Kazhdan transfer between Hecke algerbras using the Iwahori filtration \cite{Ga2015}. The Kazhdan philosophy allows us to transfer results for admissible representations that are known in characteristic zero to characteristic $p$, and viceversa. For example, we make note that Langlands-Shahidi local $L$-functions, $\gamma$-factors and root numbers are respected by the transfer for all split groups. Additionally, we prove in characteristic $p$, a result that Lapid, Mui\'c and Tadi\'c proved in characteristic zero concerning the generic unitary dual of classical groups \cite{LaMuTa2004}.

We then focus, starting in \S~\ref{classical:holo}, on ${\bf G}_n$ either a split classical group or a quasi-split unitary group. First, we look at the Siegel Levi subgroup $\bf M$ of ${\bf G}_n$ \cite{lomeliSiegel}. In this case, we show the holomorphy and non-vanishing of exterior square, symmetric square and Asai $L$-functions over function fields for $\Re(s) > 1$ via the Langlands-Shahidi method (Proposition~\ref{extsymAholo}). This is necessary as a preliminary induction step towards the holomorphy of $L$-functions for the classical groups and is studied ``ab initio" in \cite{lomeliSiegel} via the Langlands-Shahidi method in positive characteristic.

Furhter information on the location of the poles of these $L$-functions is known over number fields, and  it might be worhwhile to look into these questions over function fields. For example, the approach of Bump and Ginzburg over number fields \cite{bg1992}, continued by Takeda in the twisted case \cite{takeda2014}, gives precise information about the holomorphy of $L^S(s,\pi,{\rm Sym}^2)$, having possible poles only at $s = 1$. Using the Langlands-Shahidi method, we explore twisted exterior and symmetric square $L$-functions in \cite{gl2015}. There, we have a Kazhdan transfer from close local fields of characteristic zero to characteristic $p$.

Now, let $\pi$ be a globally generic cuspidal automorphic representation of a quasi-split classical group ${\bf G}_n$ and let $\tau$ a cuspidal (unitary) automorphic representation of ${\rm GL}_m$ (${\rm Res}\,{\rm GL}_m$ in the case ${\bf G}_n$ is a unitary group). Over function fields we can establish and refine a number fields result of \cite{kim2000, ks2004} in these cases. More precisely, we prove that $L(s,\pi \times \tau)$ is holomorphic for $\Re(s) > 1$ and has at most a simple pole at $s=1$ (Theorem~\ref{globholo}).

In \S~\ref{classical:fff}, we go back and show that our results on functoriality for the split classical groups of \cite{lomeli2009} are valid for any global function field $k$. Previously, we had made the assumption ${\rm char}(k) \neq 2$. In particular, Theorem~9.14 of [\emph{loc.\,cit.}] (Ramanujan Conjecture) and Theorem~4.4(iii) of \cite{ls} (Riemann Hypothesis) are now valid without this restriction.

\subsection*{Acknowledgments} I would like to thank G\"unter Harder for enlightening conversations that we held during the time the article was written. I thank Guy Henniart and Freydoon Shahidi for their encouragement to work on this project. I would like to thank W. Casselman for useful discussions concerning the rationality of the Langlands-Shahidi local coefficient. I also thank R. Ganapathy, V. Heiermann, D. Prasad and S. Varma for mathematical communications. I am indebted to the Mathematical Sciences Research Institute and the Max-Planck Institute f\"ur Mathematik for providing excellent working conditions while a Postdoctoral Fellow. I am grateful to the Instituto de Matem\'aticas PUCV, where I have found a great home institution. Work on this article was supported in part by MSRI under its NSF Grant, DMS 0932078 and Project VRIEA/PUCV 039.367/2016.

\section{Eisenstein series and $L$-functions over function fields}\label{Eisenstein:ff}

\subsection{Notation} Let $k$ be a global function field with field of constants $\mathbb{F}_q$ and ring of ad\`eles $\mathbb{A}_k$. At every place $v$ of $k$, we denote by $\mathcal{O}_v$ the corresponding ring of integers. Similarly, we let $\mathfrak{p}_v$ be the maximal prime ideal with uniformizer $\varpi_v$ and $q_v$ denotes the cardinality of $\mathcal{O}_v / \mathfrak{p}_v$.

Let $\bf G$ be a connected quasi-split reductive group defined over $k$. Fix a borel subgroup ${\bf B} = {\bf T}{\bf U}$ with maximal torus $\bf T$ and unipotent radical $\bf U$. Let $\Delta$ denote the simple roots. We consider only standard parabolic subgroups. There is a correspondence between subsets $\theta \subset \Delta$ and standard parabolic subgroup $\textbf{P}_\theta = \textbf{M}_\theta \textbf{N}_\theta$ of $\bf G$. Recall that if $\textbf{A}_\theta$ is $\left( \cap_{\alpha \in \theta} \ker(\alpha) \right)^0$, then $\textbf{M}_\theta$ is the centralizer of $\textbf{A}_\theta$ in $\bf G$ and $\textbf{N}_\theta$ is the unipotent radical of $\textbf{P}_\theta$. We often drop the subscripts  and write ${\bf P} = {\bf M}{\bf N}$. Given a place $v$ of $k$ and a reductive group $\bf H$ over $k$, we let $H_v$ denote the group of $k_v$-rational points.

Let $X^*({\bf M})_k$ be the set of $k$-rational characters of ${\bf M}$. For every place $v$ of $k$, we have the $k_v$-rational characters $X^*({\bf M})_v$. We let $X_{\rm nr}({\bf M})_k$ denote the set of unramified characters of ${\bf M}(k)$. And, similarly for $X_{\rm nr}({\bf M})_v$. Let
\begin{equation*}
   \mathfrak{a}_{{\bf M},\mathbb{C}}^* = X^*({\bf M})_k \otimes \mathbb{C}.
\end{equation*}
Let $\rho_{\bf P}$ denote half the sum of the positive roots of $\bf P$. The relation
\begin{equation*}
   q^{\left\langle s \otimes \chi, H_{\bf P}(\cdot)\right\rangle} = \left| \chi(\cdot) \right|_k^{s}
\end{equation*}
leads to a surjection
\begin{equation}\label{nrsurj}
   \mathfrak{a}_{M_v,\mathbb{C}}^* \twoheadrightarrow X_{\rm nr}({\bf M})_v.
\end{equation}
at every place $v$ of $k$. Fix a maximal compact open subgroup $\mathcal{K} = \prod_v \mathcal{K}_v$ of ${\bf G}(\mathbb{A}_k)$, as in \S~I.1.4 of \cite{mw1994}. Then
\begin{equation*}
   {\bf G}(\mathbb{A}_k) = {\bf P}(\mathbb{A}_k) \mathcal{K}.
\end{equation*}
For each $v$ of $k$, $H_{P_v}(\cdot)$ is trivial on $M_v \cap \mathcal{K}_v$. The character
\begin{equation*}
   q^{\left\langle s \otimes \chi_v, H_{P_v}(\cdot)\right\rangle} \in X_{\rm nr}({\bf M})_v,
\end{equation*}
extends to one of $G_v = {\bf G}(k_v)$.

A maximal parabolic $\bf P$ corresponds to a set of the form $\theta = \Delta - \left\{ \alpha \right\}$, i.e. $\textbf{P}=\textbf{P}_{\theta}$, for some simple root $\alpha$. In this case we fix a basis element of the space $\mathfrak{a}_{{\bf M},\mathbb{C}}^*$ by setting
\begin{equation}\label{basis}
   \tilde{\alpha} = \left\langle \rho_P, \alpha^\vee \right\rangle ^{-1} \rho_P,
\end{equation}
where $\alpha^\vee$ is the coroot corresponding to $\alpha$.

Let $\pi = \otimes' \pi_v$ be cuspidal (unitary) automorphic representation of ${\bf M}(\mathbb{A}_k)$. At every place $v$ of $k$, we have the induced representation
\begin{equation*}
   {\rm I}(s,\pi_v) = {\rm ind}_{P_v}^{G_v}(\pi_v \otimes q^{\left\langle s \tilde{\alpha} , H_{P_v}(\cdot) \right\rangle}).
\end{equation*}
Then, we globally have the restricted tensor product
\begin{equation*}
   {\rm I}(s,\pi) = \otimes' {\rm I}(s,\pi_v),
\end{equation*}
with respect to functions $f_{v,s}^\circ \in {\rm I}(s,\pi_v)$ that are fixed under the action of $\mathcal{K}_v$.

\subsection{Eisenstein series} Let $\phi : {\bf M}(k) \backslash {\bf M}(\mathbb{A}_k) \rightarrow \mathbb{C}$ be an automorphic form on the space of a cuspidal (unitary) automorphic representation $\pi$ of ${\bf M}(\mathbb{A}_k)$. Then $\phi$ extends to an automorphic function $\Phi : {\bf M}(k) {\bf U}(\mathbb{A}_k) \backslash {\bf G}(\mathbb{A}_k) \rightarrow \mathbb{C}$, as in \S~I.2.17 of \cite{mw1994}. For every $s \in \mathbb{C}$, set
\begin{equation*}
   \Phi_s = \Phi \cdot q^{\left\langle s \tilde{\alpha} + \rho_{\bf P}, H_{\bf P}(\cdot) \right\rangle}.
\end{equation*}
The function $\Phi_s$ is a member of the globally induced representation ${\rm I}(s,\pi)$. We use the notation of \S~1.4 and Remark~1.2 of \cite{lsU}, where $w_0=w_lw_{l,{\bf M}}$ (see also \S~2.1 below for more details on the local notation). We have the global intertwining operator
\begin{equation*}
   {\rm M}(s,\pi,\tilde{w}_0) : {\rm I}(s,\pi) \rightarrow {\rm I}(s',\pi'),
\end{equation*}
defined by
\begin{equation*}
   {\rm M}(s,\pi,\tilde{w}_0)f(g) = \int_{N'} f(\tilde{w}_0^{-1}ng) dn,
\end{equation*}
for $f \in {\rm I}(s,\pi)$. It decomposes into a product of local intertwining operators
\begin{equation*}
   {\rm M}(s,\pi,\tilde{w}_0) = \otimes_v \, {\rm A}(s,\pi_v,\tilde{w}_0),
\end{equation*}
which are precisely those appearing in the definition of the Langlands-Shahidi local coefficient (see \S~1.5 of \cite{lsU}).

The following crucial result is found in \cite{harder1974} for split groups under certain assumptions, and is generalized in \S~IV of \cite{mw1994} to the case at hand.

\begin{theorem}[Harder] \label{ESrationality}
The Eisenstein series
\begin{equation*}
   E(s,\Phi,g,{\bf P}) = \sum_{\gamma \in {\bf P}(k) \backslash {\bf G}(k)} \Phi_s(\gamma g)
\end{equation*}
converges absolutely for $\Re(s) \gg 0$ and has a meromorphic continuation to a rational function on $q^{-s}$. Furthermore
\begin{equation*}
   {\rm M}(s,\pi) = \otimes_v \, {\rm A}(s,\pi_v,\tilde{w}_0)
\end{equation*}
is a rational operator in the variable $q^{-s}$.
\end{theorem}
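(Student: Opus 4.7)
The plan is to follow Harder's original strategy for split groups \cite{harder1974}, as extended to the quasi-split case in \S~IV of \cite{mw1994}. The argument has three ingredients: absolute convergence for $\Re(s) \gg 0$, rationality of the global intertwining operator via its Euler product, and propagation of rationality from the constant term to the full Eisenstein series. A guiding observation is that, since ${\bf P}$ is maximal, $q^{-s}$ parametrizes a one-dimensional complex algebraic torus inside $X_{\rm nr}({\bf M})_k$; any function meromorphic on this torus is automatically a rational function of $q^{-s}$, and this is what collapses Harder's ``meromorphic continuation'' to rationality in positive characteristic.

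First I would establish absolute convergence. Using the Iwasawa decomposition ${\bf G}(\mathbb{A}_k) = {\bf P}(\mathbb{A}_k)\mathcal{K}$ and the cuspidality of $\phi$, the sum over ${\bf P}(k) \backslash {\bf G}(k)$ is majorized by a naive Eisenstein series built from the character $q^{\langle s\tilde\alpha + \rho_{\bf P}, H_{\bf P}(\cdot)\rangle}$. Harder's reduction theory over function fields, together with standard estimates on a Siegel domain, then yields absolute and locally uniform convergence of this majorant for $\Re(s)$ sufficiently large, hence of $E(s,\Phi,g,{\bf P})$.

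Next I would treat the intertwining operator ${\rm M}(s,\pi,\tilde{w}_0)$. Its defining integral converges on the same half-plane, and it factorizes into the local operators ${\rm A}(s,\pi_v,\tilde{w}_0)$. At all but finitely many places the Gindikin--Karpelevich formula expresses ${\rm A}(s,\pi_v,\tilde{w}_0)f_{v,s}^\circ$ as an explicit ratio of local $L$-factors, which is a rational function of $q_v^{-s}$, hence of $q^{-s}$ since $q_v = q^{f_v}$. At the remaining ramified places, rationality of ${\rm A}(s,\pi_v,\tilde{w}_0)$ in $q_v^{-s}$ on a fixed $\mathcal{K}_v$-finite vector follows from Harish-Chandra's theory of intertwining operators in positive characteristic: the operator is meromorphic in $s$ along an algebraic torus, hence automatically rational in $q_v^{-s}$. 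Multiplying the local factors yields that ${\rm M}(s,\pi,\tilde{w}_0)$ is a rational operator in $q^{-s}$.

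Finally, the constant term of $E(s,\Phi,g,{\bf P})$ along ${\bf P}$ equals $\Phi_s + {\rm M}(s,\pi,\tilde{w}_0)\Phi_s$, which is rational in $q^{-s}$ by the previous step, and the constant term along any other standard parabolic is a finite sum of analogous intertwining operators, handled similarly. A Maass--Selberg / truncation argument in the style of \S~V of \cite{mw1994}, rewritten in the function field setting where all spectral parameters live on complex algebraic tori, then propagates meromorphy from the constant terms to the Eisenstein series itself. The main obstacle, as over number fields, is precisely this last step: one must justify that meromorphy of every constant term forces meromorphy of $E(s,\Phi,g,{\bf P})$. Once this is in place, the improvement from ``meromorphic'' to ``rational in $q^{-s}$'' is a formal consequence of the algebraic nature of the spectral parameter.
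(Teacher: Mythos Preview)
The paper does not prove this theorem. It is stated and attributed to Harder, with the sentence immediately preceding it reading: ``The following crucial result is found in \cite{harder1974} for split groups under certain assumptions, and is generalized in \S~IV of \cite{mw1994} to the case at hand.'' There is no proof in the paper to compare your proposal against; the theorem is quoted as input from the literature.

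Your sketch is a reasonable outline of the argument in those references, though it remains a sketch: the passage from rationality of constant terms to rationality of the Eisenstein series itself is the substantive part, and you correctly flag it as the main obstacle without supplying the actual mechanism (in \cite{mw1994} this goes through the full machinery of the spectral decomposition and the functional equations, not just a truncation argument). If you are writing this up for a context where a self-contained proof is expected, that step would need to be filled in properly; for the purposes of this paper, a citation suffices.
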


\subsection{$L$-groups} Let $\Gamma$ be the Galois group over the separable closure $k_s$ of $k$. The fixed Borel corresponds to the simple roots $\Delta$. This determines a based root datum $\Psi_0 = (X^*,\Delta, X_*,\Delta^\vee)$. The dual root datum $\Psi_0^\vee = (X_*,\Delta^\vee,X^*,\Delta)$ determines the Chevalley group ${}^LG^\circ$ over $\mathbb{C}$. The $L$-group of ${\bf G}$ is the semidirect product
\begin{equation*}
   {}^LG = {}^LG^\circ \rtimes \Gamma.
\end{equation*}
The based root datum $\Psi^\vee$ fixes a Borel subgroup ${}^LB$, and we have all standard parabolic subgroups of the form ${}^LP = {}^LP^\circ \rtimes \Gamma$. The Levi subgroup of ${}^LP$ is of the form ${}^LM = {}^LM^\circ \rtimes \Gamma$, while the unipotent radical is given by ${}^LN = {}^LN^\circ$. We refer to \cite{borel} for more details on the semi-direct structure of ${}^LG$.

\subsection{Langlands-Shahidi $L$-functions}\label{notation:LS}
Fix a pair of quasi-split reductive group schemes $({\bf G},{\bf M})$ such that ${\bf M}$ is a Levi component of a parabolic subgroup ${\bf P} = {\bf M}{\bf N}$ of $\bf G$. Let $r: {}^LM \rightarrow {\rm End}({}^L\mathfrak{n})$ be the adjoint representation of ${}^LM$ on the Lie algebra ${}^L\mathfrak{n}$ of ${}^LN$. It decomposes into irreducible constituents
\begin{equation*}
   r = \oplus_{i=1}^{m_r} r_i.
\end{equation*}
We study $L$-functions that arise from such $r_i$. We refer to \cite{lsU} for more details.

Locally, we let $\mathscr{L}_{\rm loc}(p)$ denote the category of triples $(F,\pi,\psi)$ consisting of: $F$ a non-archimedean local field of characteristic $p$; $\pi$ a generic representation of ${\bf M}(F)$; and $\psi : F \rightarrow \mathbb{C}^\times$ a continuous non-trivial character. We say $(F,\pi,\psi) \in \mathscr{L}_{\rm loc}(p)$ is unramified, if $\pi$ has an Iwahori fixed vector.

Globally, let $\mathscr{L}_{\rm glob}(p)$ be the category of quadruples $(k,\pi,\psi,S)$ consisting of: $k$ a global function field with field of constants $\mathbb{F}_q$; 
$\pi = \otimes' \pi_v$ a globally generic cuspidal automorphic representation of ${\bf M}(\mathbb{A}_k)$; $\psi : k \backslash \mathbb{A}_k \rightarrow \mathbb{C}^\times$ a non-trivial character; and, $S$ a finite set of places of $k$ such that $\pi_v$ and $\psi_v$ are unramified for $v \notin S$.

Theorem~6.1 of \cite{lsU} associates to each triple $(F,\pi,\psi) \in \mathscr{L}_{\rm loc}(p)$ the local factors:
\begin{equation*}
   \gamma(s,\pi,r_i,\psi), \ L(s,\pi,r_i), \text{ and } \varepsilon(s,\pi,r_i,\psi).
\end{equation*}
Globally, we have partial $L$-functions
\begin{equation*}
   L^S(s,\pi,r_i) = \prod_{v \notin S} L(s,\pi_v,r_{i,v}),
\end{equation*}
for $(k,\pi,\psi,S) \in \mathscr{L}_{\rm glob}(p)$. And, finally, completed global $L$-functions and $\varepsilon$-factors:
\begin{equation*}
   L(s,\pi,r_i) = \prod_{v} L(s,\pi_v,r_{i,v}) \text{ and } \varepsilon(s,\pi,r_i) = \prod_v \varepsilon(s,\pi_v,r_{i,v},\psi_v).
\end{equation*}

\subsection{Globally generic representations and rationality}

In the Langlands-Shahidi method, the quadruples we consider $(k,\pi,\psi,S) \in \mathscr{L}_{\rm glob}(p)$ require that $\pi = \otimes' \pi_v $ be globally generic. In particular, we can assume that $\pi$ is globally $\psi$-generic. By definition, there is a cusp form $\varphi$ in the space of $\pi$ such that
\begin{equation*}
   W_{M,\varphi}(m) = \int_{{\bf U}_M(K) \backslash {\bf U}_M(\mathbb{A}_k)} \varphi(um) \overline{\psi}(u) \, du \neq 0.
\end{equation*}
In this case, the Fourier coefficient of the Eisenstein series $E(s,\Phi,g)$ is given by
\begin{equation*}
   E_\psi(s,\Phi,g,{\bf P}) = \int_{{\bf U}(K) \backslash {\bf U}(\mathbb{A}_k)} E(s,\Phi,ug) \overline{\psi}(u) \, du.
\end{equation*}
The Fourier coefficients of Eisenstein series are also rational functions on $q^{-s}$ \cite{harder1974}. We use this to prove the following result.

\begin{theorem}\label{rationalL}
Let $(k,\pi,\psi,S) \in \mathscr{L}_{\rm glob}(p)$. Then each $L$-function $L(s,\pi,r_i)$ converges absolutely for $\Re(s) \gg 0$ and has a meromorphic continuation to a rational function in $q^{-s}$.
\end{theorem}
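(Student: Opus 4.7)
The plan is to derive rationality from Harder's theorem (Theorem~\ref{ESrationality}) by factoring the Whittaker--Fourier coefficient of the Eisenstein series as a product of local Whittaker functionals and comparing with the Casselman--Shalika formula at unramified places.

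Since $\pi$ is globally $\psi$-generic, the Fourier coefficient $E_\psi(s,\Phi,g,{\bf P})$ factors as $\prod_v W_v(s,f_v)$, where each $W_v$ is a local Whittaker functional applied to a section $f_v \in {\rm I}(s,\pi_v)$. At $v \notin S$, the Casselman--Shalika formula identifies $W_v(s,f_v^\circ)$ with $\prod_{i=1}^{m_r} L(1+is,\pi_v,r_{i,v})^{-1}$, so that the contribution over unramified places equals $\prod_i L^S(1+is,\pi,r_i)^{-1}$. At $v \in S$, by Theorem~\ref{Whittaker:Ind}, $W_v(s,f_v)$ is a Laurent polynomial in $q_v^{-s}$, hence rational in $q^{-s}$. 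Combining with Harder's rationality of $E_\psi$ yields
\begin{equation*}
\prod_{i=1}^{m_r} L^S(1+is,\pi,r_i) \;=\; \frac{\prod_{v \in S} W_v(s,f_v)}{E_\psi(s,\Phi,g,{\bf P})} \;\in\; \mathbb{C}(q^{-s}).
\end{equation*}
Absolute convergence of each $L^S(s,\pi,r_i)$ for $\Re(s) \gg 0$ is standard from the Euler product at unramified places using unitaricity of $\pi$.

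To separate the individual $L^S(s,\pi,r_i)$ from the product, I proceed by induction on the semisimple rank of ${\bf G}$, following the Shahidi induction: when $m_r = 1$ (as in the Rankin--Selberg case ${\rm GL}_n \times {\rm GL}_m$) the statement is immediate from the identity above, while for higher $({\bf G},{\bf M})$ each $L^S(s,\pi,r_j)$ with $j > 1$ also arises for smaller Levi data (after suitable Hecke character twists), so these are already rational by induction, permitting the isolation of $L^S(s,\pi,r_1)$. Passing from partial to completed $L$-functions is then immediate, since the omitted local factors $L(s,\pi_v,r_{i,v})$ for $v \in S$ are rational in $q_v^{-s}$ by Theorem~6.1 of \cite{lsU}.

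The main obstacle will be the separation step: the Shahidi induction must be set up cleanly in positive characteristic, with the Casselman--Shalika normalization compatible with Harder's rational parametrization of intertwining operators and with the rationality of the Whittaker coefficients at ramified places established in Theorem~\ref{Whittaker:Ind}. The local result in \S~\ref{Whittaker:local} is the technical heart of the argument; once it is in place, the global rationality follows essentially by bookkeeping of the factorization above.
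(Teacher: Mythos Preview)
Your proposal is correct and follows essentially the same approach as the paper's own proof: factor the Whittaker--Fourier coefficient of the Eisenstein series, apply Casselman--Shalika at unramified places and Theorem~\ref{Whittaker:Ind} at ramified places, invoke Harder's rationality, then isolate each $L^S(s,\pi,r_i)$ via the Shahidi induction and complete using the local rationality from \cite{lsU}. One small imprecision: the induction step (as in \S~6 of \cite{lsU}) proceeds by passing to auxiliary pairs $({\bf G}_i,{\bf M}_i)$ of smaller $m_{r'}$ where $r_i$ reappears as $r_1'$, not by Hecke character twists as you suggest---but this does not affect the validity of the argument.
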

\begin{proof}
The generic assumption on $\pi$, makes it possible to further decompose the Fourier coefficients of Eisenstein. From \S~5 of \cite{lsU}, we have the connection between the global Whittaker model and the local ones:
\begin{equation*}
   E_\psi(s,\Phi,g,{\bf P}) = \prod_v \lambda_{\psi_v}(s,\pi_v)({\rm I}(s,\pi_v)(g_v)f_{s,v}),
\end{equation*}
with $f_s \in {\rm I}(s,\pi)$, $f_{s,v} = f_{s,v}^\circ$ for all $v \notin S$. 
Furthermore, we have Corollary~5.2 of [\emph{loc.\,cit.}]:
\begin{equation}\label{eq1rationalL}
   E_\psi(s,\Phi,g,{\bf P}) = \prod_{v \in S} \lambda_{\psi_v}(s,\pi_v)({\rm I}(s,\pi_v)(g_v)f_{s,v}) \prod_{i=1}^{m_r} L^S(1+is,\pi,r_i)^{-1}.
\end{equation}
We explore the details of the local Whittaker functionals $\lambda_{\psi_v}(s,\pi_v)({\rm I}(s,\pi_v)(g_v)f_{s,v})$ in the next section. In particular, Theorem~\ref{precise:Wthm} proves that the local Whittaker functionals are polynomials in $\left\{ q_v^{s},q_v^{-s} \right\}$, where $q_v = q^{\deg v}$. Now, the Fourier coefficients $E_\psi(s,\Phi,g,{\bf P})$ are rational on $q^{-s}$. Also, recall that each $L^S(s,\pi,r_i)$ are absolutely convergent for $\Re(s) \gg 0$, Theorem~13.2 of \cite{borel}. Then, we can conclude that the product
\begin{equation*}
   \prod_{i=1}^{m_r} L^S(1+is,\pi,r_i)
\end{equation*}
extends to a rational function on $q^{-s}$. The induction step found in \S~6 of \cite{lsU} allows us to isolate each $L$-function to conclude that each
\begin{equation*}
   L^S(1+is,\pi,r_i)
\end{equation*}
is rational in the variable $q^{-s}$. Hence, each $L^S(s,\pi,r_i)$ is rational. Furthermore, Theorem~6.1 of [\emph{loc.\,cit.}], gives that locally each $L$-function
\begin{equation*}
   L(s,\pi_v,r_{i,v}), \text{ for } (k_v,\pi_v,\psi_v) \in \mathscr{L}_{\rm loc}(p),
\end{equation*}
is a rational function on $q_v^{-s}$. Hence, the completed $L$-function
\begin{equation*}
   L(s,\pi,r_i) = \prod_{v \in S} L(s,\pi_v,r_{i,v}) L^S(s,\pi,r_i)
\end{equation*}
meromorphically continues to a rational function in the variable $q^{-s}$.
\end{proof}

\section{On the rationality of local Whittaker models}\label{Whittaker:local}

This section is local in nature and we work over any non-archimedean local field $F$ with residue field $\mathbb{F}_q$. We take the opportunity to make available the details of the rationality of local Whittaker functionals (Theorem~\ref{Whittaker:Ind}, and more precisely Theorem~\ref{precise:Wthm}).

\subsection{Induced representations and Whittaker models}\label{Ind:notation} We fix a pair of quasi-split reductive groups $({\bf G},{\bf M})$ and use the local notation of \S~\ref{notation:LS}. Let $(F,\pi,\psi) \in \mathscr{L}_{\rm loc}(p)$ and denote the space of $\pi$ by $V$. For $\nu \in \mathfrak{a}_{M,\mathbb{C}}^*$, let
\begin{equation*}
   {\rm I}(\nu,\pi) = {\rm ind}_P^G(\pi \otimes q^{\left\langle \nu,H_P(\cdot)\right\rangle}),
\end{equation*}
where ${\rm ind}$ denotes normalized unitary parabolic induction, and we denote the space of ${\rm I}(\nu,\pi)$ by ${\rm V}(\nu,\pi)$. We will often abuse notation and identify the representation ${\rm I}(\nu,\pi)$ with its space ${\rm V}(\nu,\pi)$. Furhtermore, when the parabolic subgroup $\bf P$ of $\bf G$ is clear from context, we simply write ${\rm Ind}(\pi)$ instead of ${\rm I}(0,\pi)$ or ${\rm ind}_P^G(\pi)$.

Equation~\eqref{nrsurj}, is locally given as follows:
\begin{equation*}
   \mathfrak{a}_{M,\mathbb{C}}^* \twoheadrightarrow X_{\rm nr}(\textbf{M})
\end{equation*}
associates the $F$-rational character $q^\nu = q^{\left\langle \nu, H_\theta(\cdot) \right\rangle}$ to the element $\nu \in \mathfrak{a}_{M,\mathbb{C}}^*$. The kernel of this map is of the form $\frac{2\pi i}{\log q}\Lambda$, for a certain lattice $\Lambda$ of $\mathfrak{a}_{M,\mathbb{C}}^*$. This surjection gives $X_{\rm nr}(\textbf{M})$ the structure of a complex algebraic variety of dimension $d=\dim_\mathbb{R} \mathfrak{a}_M$. Thus, there are notions of polynomial and rational functions on $X_{\rm nr}({\bf M})$ (see \S~4 of \cite{Wa2003}). 

For an irreducible admissible representation, the space of Whittaker functionals has dimension at most one \cite{shalika1974}. In this article we assume $(F,\pi,\psi) \in \mathscr{L}_{\rm loc}(p)$ has $\pi$ generic and therefore there is a unique Whittaker functional on $V$, up to multiplication by a constant. Let us state a preliminary version of the main result of this section; it will be stated more precisely as Theorem~\ref{precise:Wthm}.

\begin{theorem}\label{Whittaker:Ind}
Let $(F,\pi,\psi) \in \mathscr{L}_{\rm loc}(p)$ and let $\nu \in \mathfrak{a}_{M,\mathbb{C}}^*$. Then the induced representation ${\rm I}(\nu,\pi)$ is also generic. It has a Whittaker functional $\lambda_\psi(\nu,\pi)$ which is a polynomial on $\left\{ q^s, q^{-s} \right\}$. 
\end{theorem}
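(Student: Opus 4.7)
The plan is to follow the strategy of Casselman--Shalika \cite{cs1980}, where the analogous holomorphy of the Jacquet--Whittaker integral was proved for principal series representations, and to extend their argument to the parabolic induction setting considered here. First I would fix a non-zero Whittaker functional $\lambda_{M,\psi}$ on the space $V$ of the generic representation $\pi$ (unique up to scalar by \cite{shalika1974}), set $w_0 = w_l w_{l,{\bf M}}$ as in \S~\ref{Eisenstein:ff}, and let $\overline{N}$ denote the unipotent radical opposite to $N$. For $f \in {\rm I}(\nu,\pi)$ I would introduce the candidate Jacquet integral
\begin{equation*}
  \lambda_\psi(\nu,\pi)(f) := \int_{\overline{N}} \lambda_{M,\psi}\bigl(f(w_0^{-1}\overline{n})\bigr)\, \overline{\psi}(\overline{n})\, d\overline{n}.
\end{equation*}
Standard asymptotic bounds on the exponents of the generic $\pi$, together with the Iwasawa-decomposition growth of sections of ${\rm I}(\nu,\pi)$, imply that this integral converges absolutely when $\Re(\nu)$ lies in a suitable translate of the positive Weyl chamber. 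On that cone it visibly defines a non-zero Whittaker functional, which in particular yields the genericity of ${\rm I}(\nu,\pi)$.

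The heart of the argument will be the extension of this functional to all $\nu$ and the verification that it depends polynomially on $\{q^s, q^{-s}\}$. The plan is to fix a compact open subgroup $K_0 \subset G$ stabilizing $f$ on the right and, for $r \geq 0$, to set $\overline{N}_r := \overline{N} \cap \varpi^{-r} K_0 \varpi^r$, an exhausting filtration of $\overline{N}$ by open compact subgroups. The key technical statement I would aim for is a \emph{stabilization} result: there exists $r_0 \geq 0$, depending only on the support datum of $f$ and not on $\nu$, such that
\begin{equation*}
  \int_{\overline{N} \setminus \overline{N}_r} \lambda_{M,\psi}\bigl(f(w_0^{-1}\overline{n})\bigr)\, \overline{\psi}(\overline{n})\, d\overline{n} = 0 \qquad \text{for all } r \geq r_0.
\end{equation*}
This stabilization is the step I expect to be the main obstacle. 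The plan is to prove it by conjugating by elements $a$ in the split component of the centre of $\bf M$: the covariance
\begin{equation*}
  \lambda_{M,\psi}\bigl(f(w_0^{-1}\overline{n})\bigr) = \delta_P(a)^{1/2} q^{\langle \nu, H_P(a)\rangle}\, \lambda_{M,\psi}\bigl(\pi(a)^{-1}f(w_0^{-1} a^{-1}\overline{n}a)\bigr)
\end{equation*}
lets one contract $\overline{n}$ back into $\overline{N}_{r_0}$, and the resulting $\psi$-averaging against the dilation by $a$ of the simple root subgroups of $\overline{N}$ forces the integrand to vanish off a fixed compact set. This is exactly the manoeuvre of \cite{cs1980} in the principal series case; for the parabolic generalization I would rely on the structural Whittaker-functional lemmas from \cite{casselmanletter}.

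Once the stabilization is in hand, the Whittaker functional reduces to the integral over the compact set $\overline{N}_{r_0}$ for every $\nu$. Since the integrand is locally constant there, the integral becomes a finite sum $\sum_j c_j \cdot q^{\langle \nu, H_P(\overline{n}_j)\rangle}$, with $\overline{n}_j$ ranging over a finite set of coset representatives and the coefficients $c_j \in \mathbb{C}$ independent of $\nu$. Under the parametrization $\nu = s\tilde{\alpha}$, each exponential specializes to $q^{n_j s}$ for some $n_j \in \mathbb{Z}$, so the entire expression is a Laurent polynomial in $q^s$ and $q^{-s}$, as claimed. A refinement along the same lines --- tracking the support of $f$ more carefully --- should yield the sharper polynomial statement promised in Theorem~\ref{precise:Wthm}.
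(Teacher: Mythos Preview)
Your overall strategy---define the Jacquet integral, reduce to a compact domain of integration by a stabilization statement independent of $\nu$, and then read off a finite sum of monomials in $q^{\pm s}$---matches the paper's. The mechanism you propose for the stabilization, however, is not the one the paper uses and, as written, has gaps.

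The paper does not argue by conjugation with elements of the split centre of $\bf M$. Instead it uses the Bruhat filtration of $I = {\rm I}(\nu,\pi)$: with $I_{d_P}$ the subspace of functions supported on the big cell $Pw_0B$ and $J = I/I_{d_P}$, the key structural input (Lemma~\ref{twistedj}) is that the twisted Jacquet module $J_{\psi,U}$ vanishes. Indeed, for each $w \in [W_{\bf M}\backslash W]$ with $w \neq w_0$ there is a simple root $\alpha$ with $w\alpha \in \Sigma^+ \setminus \Sigma_{\bf M}^+$, and non-degeneracy of $\psi$ then kills $(I_w)_{\psi,U}$. Consequently, for each $f$ there is a compact open $U_1 \subset U$ with $\mathcal{P}_{\psi,U_1}f \in I_{d_P}$, and on $I_{d_P}$ the integral over $\overline{N}$ is automatically a finite sum because such functions have compact support on $\overline{N}$. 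The $\nu$-independence of $U_1$, for $f$ ranging over ${\rm I}(\nu,\pi)^K$ with $K$ fixed, is then obtained by a Baire-category argument on an exhausting sequence of compact open subgroups of $U$ (Proposition~\ref{wsupport}); this is the substitute for your ``uniform $r_0$'' and is not visible in your sketch.

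Two specific points where your outline does not go through as stated. First, the opening convergence claim invokes ``asymptotic bounds on the exponents of the generic $\pi$'', but genericity alone gives no such control, so absolute convergence on a cone is not available for arbitrary $(F,\pi,\psi)\in\mathscr{L}_{\rm loc}(p)$; the paper avoids this entirely by defining $\lambda_\psi(\nu,\pi)$ first on $I_{d_P}$ and then extending via $\mathcal{P}_{\psi,U_1}$, never appealing to analytic continuation. Second, your covariance identity introduces $\pi(a)^{-1}$ inside $\lambda_{M,\psi}$; for central $a$ this contributes only the central character of $\pi$, and the dilation $\overline{n}\mapsto a^{-1}\overline{n}a$ does not by itself force the $\psi$-integral to vanish off a fixed compact set. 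The relevant argument in \cite{cs1980} is in fact the twisted-Jacquet-module vanishing above, specialized to ${\bf M}={\bf T}$, rather than a direct conjugation estimate.
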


Given $(F,\pi,\psi) \in \mathscr{L}_{\rm loc}(p)$, we have Whittaker functionals for the representations $(\pi,V)$ and $\left( {\rm I}(\nu,\pi),{\rm V}(\nu,\pi) \right)$. Care must be taken when going from one generic character to another. Because of this, we now revisit local Whittaker models.

\subsection{Local Whittaker models} We write algebraic group schemes defined over $F$ in bold capital letters, e.g., $\bf G$, and the corresponding group of rational points in regular capital letters, e.g. $G = {\bf G}(F)$. The roots of $\bf G$ with respect to a fixed maximal split torus $\textbf{T}_0$ are denoted by $\Sigma$, the positive roots by $\Sigma^+$, which depend on the choice of Borel $\bf B$. We denote the simple roots by $\Delta$. We write $\Sigma_r$ and $\Sigma_r^+$ to be the corresponding sets of reduced roots.

For each $\alpha \in \Sigma$, let $\textbf{N}_\alpha$ be the subgroup of $\bf U$ whose Lie algebra is $\mathfrak{g}_\alpha + \mathfrak{g}_{2\alpha}$. To define a non-degenerate character of $U$, first write ${\bf U} = \prod_{\alpha \in \Sigma_r^+} {\bf N}_\alpha$. The group $\prod_{\alpha \in \Sigma^+ - \Delta} {\bf N}_\alpha$ is a normal subgroup of ${\bf U}$ and 
\begin{equation}
   {\bf U} \ / \prod_{\alpha \in \Sigma^+ - \Delta} {\bf N}_\alpha \cong \prod_{\alpha \in \Delta} {\bf N}_\alpha \slash {\bf N}_{2\alpha}.
\label{uwhittaker}
\end{equation}
If $\psi_\alpha:N_\alpha / N_{2\alpha} \rightarrow \mathbb{C}$ is a smooth character for each $\alpha \in \Delta$, then $\psi=\prod_{\alpha \in \Delta}{\psi_\alpha}$ determines a character of $U$ via the projection map ${\bf U} \rightarrow {\bf U} \ / \prod_{\alpha \in \Sigma^+ - \Delta} {\bf N}_\alpha$ and the isomorphism of equation~\eqref{uwhittaker}. A character $\psi : U \rightarrow \mathbb{C}$ defined this way is called \emph{non-degenerate} if each $\psi_\alpha$ is non-trivial.

Fix $\psi$, a non-degenerate character of $U$. Recall that an admissible representation $(\rho,W)$ of $G$ is said to be $\psi$-\emph{generic} or \emph{non-degenerate} if there exists a linear functional $\lambda_\psi: W \rightarrow \mathbb{C}$ such that $\lambda_\psi(\rho(u)w)=\psi(u)\lambda_\psi(w)$ for all $u \in U$; $\lambda_\psi$ is called a \emph{Whittaker functional}.

Having fixed $\psi$, one can define a non-degenerate character $\psi_{\tilde{w}_0}$ of the unipotent radical $M \cap U$ of $M$ so that $\psi$ and $\psi_{\tilde{w}_0}$ are $\tilde{w}_0$-\emph{compatible}, that is to say
\begin{equation}\label{compatible}
   \psi_{\tilde{w}_0}(u)=\psi(\tilde{w}_{0} u \tilde{w}_{0}^{-1}), \ u \in M \cap U.
\end{equation}
Here, $\tilde{w}$ denotes a representative in $N/T_0$ of a Weyl group element $w$. The question of varying Weyl group element representatives, as well as Haar measures, is taken up in \S~2 of \cite{lsU}. The results of this section are valid for any fixed set of representatives and any two fixed non-degenerate characters $\psi$ of $U$ and $\psi_{\tilde{w}_0}$ of $U_M$ which are compatible in the sense of \eqref{compatible}.

\subsection{Basic facts about twisted Jacquet modules}

The twisted Jacquet module of any smooth representation $(\sigma,V)$ of $U$, is defined to be
\begin{equation*}
   V_{\psi,U}=V/V_\psi(U),
\end{equation*}
where $V_\psi(U)$ is the span of $\{ \sigma(u)v-\psi(u)v \vert u \in U, v\in V \}$. Another characterization of $V_\psi(U)$ is as the set of those $v \in V$ such that there is a compact open subgroup $U_0 \subset U$ with the property that
\begin{equation*}
   \int_{U_0} \psi^{-1}(u)\sigma(u)vdu=0.
\end{equation*}
The usual Jacquet module $V_U$ is equal to $V_{1,U}$ in this notation. From \cite{cs1980} we have the following proposition:

\begin{proposition}\label{twistedprop}
Twisted Jacquet modules satisfy the following properties:
\begin{itemize}
   \item[(i)] The functor $V\rightarrow V_{\psi,U}$ is exact.
   \item[(ii)] If  $V^\prime$ is any space on which $U$ acts by $\psi$ then the map $V \rightarrow V_{\psi,U}$ induces an isomorphism
   \begin{equation*}
	{\rm Hom}_U(V,V^\prime) \cong {\rm Hom}_{\mathbb{C}}(V_{\psi,U},V^\prime).
   \end{equation*}
   \item[(iii)] Let $\Omega$ be the $U$-morphism ${\rm I}_U^G(\mathbb{C}_\psi) \rightarrow \mathbb{C}_\psi, f \mapsto f(1)$. Then for any smooth character $\psi$ of $U$, and $V$ a smooth representation of $G$. Composition with $\Omega$ induces an isomorphism
   \begin{equation*}
      {\rm Hom}_G(V,{\rm I}_U^G(\mathbb{C}_\psi)) \cong {\rm Hom}_{\mathbb{C}}(V_{\psi,U}, \mathbb{C}).
   \end{equation*}
   \item[(iv)] If $U_1$ and $U_2$ are two subgroups of $U$, with $U=U_1U_2$, then
   \begin{equation*}
	V_\psi(U)=V_\psi(U_1)+V_\psi(U_2).
   \end{equation*}
\end{itemize}
\end{proposition}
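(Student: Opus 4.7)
The main technical input is an integral characterization of the subspace $V_\psi(U)$: a vector $v$ lies in $V_\psi(U)$ if and only if there exists a compact open subgroup $U_0 \subset U$ with
\begin{equation*}
   \int_{U_0} \psi^{-1}(u) \sigma(u) v \, du = 0.
\end{equation*}
The easy direction uses change of variables: for a generator $w = \sigma(u_0) v - \psi(u_0) v$ and any compact open subgroup $U_0 \ni u_0$ closed under right multiplication by $u_0$, the integral vanishes after the substitution $u \mapsto u u_0$ in the first summand. The converse uses smoothness: for $v$ fixed by a sufficiently small compact open subgroup, one approximates the integral by a finite sum and expresses the result as a combination of generators. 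I will take this equivalence as the starting point.

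With this tool, parts (i) and (ii) become straightforward. For (i), right exactness is formal; left exactness reduces to the following observation: if $V_1 \hookrightarrow V_2$ and $v_1 \in V_1$ satisfies $\int_{U_0} \psi^{-1}(u) \sigma(u) v_1 \, du = 0$ in $V_2$, then since the integrand already takes values in the $U$-stable subspace $V_1$, the integral vanishes in $V_1$. Part (ii) is the universal property: any $U$-equivariant map $T : V \to V^\prime$ sends a generator $\sigma(u) v - \psi(u) v$ to $\psi(u) T(v) - \psi(u) T(v) = 0$, and hence factors uniquely through $V_{\psi,U}$; conversely any $\mathbb{C}$-linear map out of $V_{\psi,U}$ pulls back to a $U$-morphism. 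Part (iii) then combines Frobenius reciprocity ${\rm Hom}_G(V,{\rm I}_U^G(\mathbb{C}_\psi)) \cong {\rm Hom}_U(V,\mathbb{C}_\psi)$, realized explicitly by composition with $\Omega$, with the case $V^\prime = \mathbb{C}_\psi$ of part (ii).

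I expect part (iv) to be the principal obstacle. The inclusion $V_\psi(U_1) + V_\psi(U_2) \subseteq V_\psi(U)$ is immediate from the definitions. For the reverse, given $v \in V_\psi(U)$, I would use the integral characterization to locate compact open subgroups $U_{0,i} \subset U_i$ for which the iterated twisted average $e_{U_{0,1}} e_{U_{0,2}} v$ vanishes, where $e_{U_{0,i}} = \int_{U_{0,i}} \psi^{-1}(u) \sigma(u) \, du$. The plan is then to telescope
\begin{equation*}
   v = \bigl( v - e_{U_{0,2}} v \bigr) + \bigl( e_{U_{0,2}} v - e_{U_{0,1}} e_{U_{0,2}} v \bigr) + e_{U_{0,1}} e_{U_{0,2}} v,
\end{equation*}
discard the last summand, and verify separately that $w - e_{U_{0,i}} w \in V_\psi(U_i)$ for every vector $w$, again via the integral criterion. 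The delicate point, and the main non-formal step, is the coordination of the sizes of $U_{0,1}$ and $U_{0,2}$: they must be large enough for $e_{U_{0,1}} e_{U_{0,2}} v = 0$ to follow from the hypothesis (possibly after replacing the witnessing $U_0$ by a product $U_{0,1} U_{0,2}$ and absorbing the resulting Jacobian), yet compatible with the factorization $U = U_1 U_2$ in such a way that each difference $w - e_{U_{0,i}} w$ is visibly a finite combination of generators of $V_\psi(U_i)$.
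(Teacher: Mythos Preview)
The paper does not supply its own proof of this proposition: it is simply quoted from Casselman--Shalika \cite{cs1980} (see the sentence immediately preceding the statement, ``From \cite{cs1980} we have the following proposition''). Your outline is correct and is essentially the standard argument one finds in that reference and in Casselman's notes; in particular your treatment of (i)--(iii) via the integral criterion and Frobenius reciprocity, and your telescoping strategy for (iv), are exactly the expected proofs, with the product decomposition $U_0 = U_{0,1}U_{0,2}$ of a sufficiently large compact open subgroup being the only point requiring the ambient $p$-adic structure.
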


\subsection{The Whittaker model of an induced representation}

Fix a parabolic subgroup ${\bf P} = {\bf P}_\theta$ with Levi $\bf M$. Also, fix non-degenerate characters $\psi$ and $\psi_{\tilde{w}_0}$ which are compatible in the sense of equation~\eqref{compatible}. Let $(F,\pi,\psi) \in \mathscr{L}_{\rm loc}(p)$ be such that $\pi$ is a $\psi_{\tilde{w}_0}$-generic representation of $M$ and denote its Whittaker functional by $\lambda_{\psi_{\tilde{w}_0}}$. Let $\otheta=\tilde{w}_0\theta$ be the conjugate of $\theta$ in $\Delta$. We then have the standard parabolic subgroup $\overline{\bf P} = {\bf P}_{\overline{\theta}}$ of $\bf G$, and we write $\overline{\bf M} = {\bf M}_{\overline{\theta}}$ for its Levi and $\overline{\bf N} = {\bf N}_{\overline{\theta}}$ for its unipotent radical.

Let $W$ be the Weyl group of $\Sigma$, and for each $\alpha \in \Delta$ let $w_\alpha$ be its corresponding reflection. We let $W_{\bf M}$ be the subgroup of $W$ generated by $w_\alpha, \ \alpha \in \theta$. Denote by $w_l$ and $w_{l,{\bf M}}$ the longest elements of $W$ and $W_{\bf M}$, respectively. Let $[W_{\bf M} \backslash W] = \left\{w \in W \vert w^{-1}\theta > 0 \right\}$, then $w_0=w_lw_{l,\bf M}$ will be the longest element of $[W_{\bf M} \backslash W]$. Weyl group element representatives $\tilde{w}$ are chosen as in \cite{lsU}.

We construct the Whittaker functional $\lambda_\psi(\nu,\pi): {\rm V}(\nu,\pi) \rightarrow \mathbb{C}$ of ${\rm I}(\nu,\pi)$ from $\lambda_{\psi_{\tilde{w}_0}}: V \rightarrow \mathbb{C}$. We first define it on a subspace $I_{d_P}$ of $I=I(\nu,\pi)$, to which we now turn. Recall that the Bruhat decomposition 
\begin{equation}\label{bruhat}
   G = \coprod_{w \in [W_{\bf M} \backslash W]} P w B
\end{equation}
gives rise to the Bruhat order among the double cosets $P w B$ of $G$. This partial order is given by $P w_1 B \prec P w_2 B $, whenever $P w_1 B$ is contained in the closure of $P w_2 B$. Here, $P w_0^{-1} B$ is maximal and is the unique open double coset which is dense in $G$. Write $d(w)= \dim(P \backslash P w B)$ and in particular $d_P = d(w_0)= \dim(P \backslash P w_0 B)$. We need a result from Casselman's notes (section~6 of \cite{casselman}), which states that there is a filtration of $I$ by $B$-stable subspaces
\begin{equation*}
	0\subset I_{d_P} \subset \cdots \subset I_0 \subset I,
\label{filtration}
\end{equation*}
where
\begin{equation*}
	I_{n} = \{ f \in I \ \vert \ {\rm supp}(f) \subset \bigcup_{d(w) \geq n} P w B \}.
\end{equation*}
Moreover
\begin{equation*}	
   I_n / I_{n+1} = \bigoplus_{d(w)=n} I_w, 
\end{equation*}
\begin{equation*}	
   I_w = \text{c-}{\rm Ind}_{wPw^{-1} \cap B }^{B} (w(\pi \otimes q^{\left\langle
   \nu,H_P(\cdot)\right\rangle} \delta_P^{1/2})). 
\end{equation*}
The subspace of importance to us is 
\begin{equation}\label{Idtheta}
   I_{d_P} = \left\{ f \in I(\nu,\pi) \ \vert \ {\rm supp}(f) \subset P w_0 B   \right\},
\end{equation}
and will also be denoted by $I(\nu,\pi)_{d_P}$, since we may want to vary $\nu$.

For functions $f$ in $I(\nu,\pi)_{d_P}$, let
\begin{equation*}
   \lambda_\psi(\nu,\pi)f 
   = \int_{\overline{N}} \lambda_{\psi_{w_0}}(f(w_0^{-1}n))\opsi(n)dn.
\end{equation*}

\begin{proposition}\label{welldefined}
For each $\nu \in \mathfrak{a}_{M,\mathbb{C}}^*$, $\lambda_\psi(\nu,\pi)$ is a well defined function on $I(\nu,\pi)_{d_P}$.
\end{proposition}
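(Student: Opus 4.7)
The proof amounts to showing that the integrand $n \mapsto \lambda_{\psi_{\tilde{w}_0}}(f(w_0^{-1} n)) \overline{\psi}(n)$ is a smooth, compactly supported $\mathbb{C}$-valued function on $\overline{N}$; for then the integral reduces to a finite sum and is manifestly finite, and linearity of $\lambda_\psi(\nu, \pi)$ in $f$ is immediate.

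First I would invoke the standard Bruhat-theoretic fact that, with $\overline{\theta} = \tilde{w}_0 \theta$ and hence $\overline{N} = N_{\overline{\theta}}$ chosen as in the paper, the map $n \mapsto w_0^{-1} n$, composed with the quotient $G \to P \backslash G$, is a homeomorphism of $\overline{N}$ onto the open subset $P \backslash P w_0 B$ of $P \backslash G$. This is the Levi-decomposition statement underlying the $B$-stable filtration of $I$ recorded just above the definition of $I_{d_P}$, and it is the reason the deepest piece $I_{d_P}$ can be identified as a $B$-module with the compactly induced module on the open cell.

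Next, I would use compactness of $P \backslash G$ to translate the support hypothesis on $f$ into compact support on $\overline{N}$. Any $f \in I(\nu, \pi)$ is locally constant and descends to a section of the induced vector bundle over the compact space $P \backslash G$, so its support, viewed in $P \backslash G$, is automatically compact. The hypothesis $\mathrm{supp}(f) \subset P w_0 B$ places this compact support inside the open subset $P \backslash P w_0 B$, and via the parametrization above it pulls back to a compact subset $C \subset \overline{N}$ outside of which $f(w_0^{-1} n) = 0$. Smoothness of $f$ yields smoothness of $n \mapsto f(w_0^{-1} n)$, so this is a smooth, compactly supported $V$-valued function on $\overline{N}$.

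Applying the Whittaker functional $\lambda_{\psi_{\tilde{w}_0}}$—which exists by the genericity hypothesis on $(F, \pi, \psi)$ and is unique up to scalar by \cite{shalika1974}—produces a smooth, compactly supported scalar function on $\overline{N}$; multiplication by the locally constant character $\overline{\psi}$ preserves these properties. With respect to any fixed Haar measure on $\overline{N}$ the integral is then a finite sum, the integrand taking only finitely many values on its compact support. Hence $\lambda_\psi(\nu, \pi) f$ is a well-defined complex number depending linearly on $f$, and independent of any limiting process.

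The point requiring real care is the Bruhat parametrization in the first step—specifically, the identification of $\overline{N}$ with $P \backslash P w_0 B$ arising from the particular choices $w_0 = w_l w_{l, \mathbf{M}}$ and $\overline{\theta} = \tilde{w}_0 \theta$, together with the check that the Weyl representative $\tilde{w}_0$ fixed in \cite{lsU} is compatible with this identification. Once that is pinned down, the remainder is a routine compactness-plus-support argument, directly parallel to the principal series treatment in \cite{cs1980,casselman}.
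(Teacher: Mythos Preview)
Your argument is correct and follows essentially the same route as the paper. Both proofs reduce to showing that $n\mapsto f(w_0^{-1}n)$ is smooth and compactly supported on $\overline{N}$; the paper does this by citing Casselman's isomorphism $I_{d_P}\cong\text{c-Ind}_{w_0Pw_0^{-1}\cap B}^{B}\bigl(w_0(\pi\otimes q^{\langle\nu,H_P(\cdot)\rangle}\delta_P^{1/2})\bigr)$ and then reading off compact support modulo $w_0Pw_0^{-1}\cap U=\overline{M}\cap U$ on $U=\overline{N}(\overline{M}\cap U)$, while you obtain the same conclusion from compactness of $P\backslash G$ together with the Bruhat parametrization of the open cell by $\overline{N}$. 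These are two phrasings of one argument, and indeed you yourself note that your first step ``is the reason the deepest piece $I_{d_P}$ can be identified as a $B$-module with the compactly induced module on the open cell.''
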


\begin{proof}
There is an isomorphism 
\begin{equation*}
   {\rm I}(\nu,\pi)_{d_P} \cong \text{c-}{\rm Ind}_{w_0 P w_{0}^{-1} \cap B}^{B} w_0(\pi \otimes q^{\left\langle \nu,H_P(\cdot)\right\rangle} \delta_P^{1/2}),
\end{equation*}
where $w_0(\pi \otimes q^{\left\langle \nu,H_P(\cdot)\right\rangle} \delta_P^{1/2})(p) = \pi \otimes q^{\left\langle \nu,H_P(\cdot)\right\rangle} \delta_P^{1/2}(w_0^{-1} p w_0), \ \forall p \in w_0 P w_0^{-1}$. The isomorphism takes a function $ f \in I(\nu,\pi)_{d_P}$ to the function $x \mapsto f(w_0^{-1}x)$, see Proposition~6.3.2 of \cite{casselman}. 

Since c-${\rm Ind}$ is the non-normalized induced representation consisting of functions of compact support modulo $w_0 P w_0^{-1} \cap B$, the function $x \mapsto f(w_0^{-1}x)$ restricted to $U$ has compact support modulo $w_0 P w_0^{-1} \cap U = \overline{M} \cap U$ (notice that $w_0^{-1} \alpha < 0, \forall \alpha \in \Sigma^+ - \Sigma_{\theta}^+$) and since $U = \overline{N} \left( \overline{M} \cap U \right)$, the function $x \mapsto f(w_0^{-1}x)$ as a function on $\overline{N} = w_0 P w_0^{-1} \backslash U$ has compact support.
\end{proof}

Let $G^*$ be the complement of $P w_0 B$ in $G$, which is a closed subset of $G$. Define $J=J(\nu,\pi)$ to be the space of locally constant functions $f:G^* \rightarrow V$ such that $f(pg)=\pi(m_p)q^{\left\langle \nu+\rho_P, H_P(m_p) \right\rangle}f(g)$, for all $p=m_pn_p \in P$, $g \in G^*$. Restriction gives a morphism of $U$-spaces from $I$ to $J$. There is then an exact sequence
\begin{equation}
   0 \rightarrow I_{d_P} \rightarrow I \rightarrow J \rightarrow 0.
\label{Usequence}
\end{equation}
We have the following:

\begin{lemma}\label{twistedj}
   $J_{\psi,U}=0$.
\end{lemma}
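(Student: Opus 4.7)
My approach is to filter $J$ by the Bruhat stratification of $G^*$ and handle each cell via Frobenius reciprocity. Writing $G^* = \bigsqcup_{w \in [W_M \backslash W],\, w \neq w_0} PwB$ and ordering the cells so that smaller-dimensional ones sit in the closures of unions of larger ones yields a $B$-stable filtration of $J$. By the exactness of the twisted Jacquet functor (Proposition~\ref{twistedprop}(i)) applied iteratively to the short exact sequences of successive filtration quotients, it suffices to prove $(J_w)_{\psi,U} = 0$ for every graded piece $J_w$ with $w \neq w_0$.

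Exactly as in the proof of Proposition~\ref{welldefined} (Casselman's Proposition~6.3.2), each graded piece is compactly induced: $J_w \cong \mathrm{c\text{-}Ind}_{wPw^{-1} \cap B}^{B}\bigl(w\cdot(\pi\otimes q^{\left\langle \nu,H_P(\cdot)\right\rangle}\delta_P^{1/2})\bigr)$. Restricting to $U$ gives $J_w|_U = \mathrm{c\text{-}Ind}_H^U\sigma$ with $H = wPw^{-1}\cap U$ and $\sigma$ the restricted inducing datum. Combining Frobenius reciprocity for compact induction with Proposition~\ref{twistedprop}(ii) reduces the vanishing $(J_w)_{\psi,U} = 0$ to showing that $\mathrm{Hom}_H(\sigma,\mathbb{C}_\psi|_H) = 0$, i.e., that no $H$-equivariant functional $\sigma\to\mathbb{C}_\psi|_H$ exists.

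The combinatorial heart of the argument is that for each $w \in [W_M \backslash W]$ with $w \neq w_0$, there exists a simple root $\alpha \in \Delta$ satisfying $N_\alpha \subset H$ and $w^{-1}\alpha \in \Sigma(N)$. For such $\alpha$, the subgroup $w^{-1}N_\alpha w$ sits inside the unipotent radical $N$, on which $\pi$ (extended to $P$) acts trivially; consequently $\sigma|_{N_\alpha} = 1$, whereas $\psi|_{N_\alpha} = \psi_\alpha \neq 1$ by the non-degeneracy of $\psi$. Any $H$-equivariant $\lambda:\sigma \to \mathbb{C}_\psi|_H$ would then satisfy $\lambda(v) = \psi_\alpha(n)\lambda(v)$ for all $n \in N_\alpha$ and $v$, forcing $\lambda = 0$. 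One can also phrase this directly via Proposition~\ref{twistedprop}(iv), writing $U = N_\alpha \cdot U^{(\alpha)}$ for the product of the remaining root subgroups.

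The main obstacle I expect is the combinatorial claim in the previous paragraph: characterizing $w_0$ as the unique element of $[W_M\backslash W]$ that sends every simple root into $\Sigma_M\cup(-\Sigma^+)$. This should follow from the defining property $\overline{\theta} = w_0\theta\subset\Delta$ together with the length identity $\ell(w_0) = |\Sigma(N)|$, which together force any $w \in [W_M \backslash W]$ with $w \neq w_0$ to send some simple root to a positive root outside $\Sigma_M$, i.e., into $\Sigma(N)$.
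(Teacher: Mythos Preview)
Your proposal is correct and follows essentially the same route as the paper: reduce via exactness of the twisted Jacquet functor to the graded pieces $I_w$ for $w\neq w_0$, then use Frobenius reciprocity together with the combinatorial fact that some simple root $\alpha$ satisfies $w^{-1}\alpha\in\Sigma^+\setminus\Sigma_{\bf M}^+$, so that the inducing datum is trivial on $N_\alpha$ while $\psi$ is not. The only cosmetic difference is that the paper realizes the Frobenius step through the contragredient $\widetilde{I_w}\cong{\rm I}_{wUw^{-1}\cap U}^{U}(w\tilde{\sigma})$ rather than your direct $\mathrm{Hom}$-adjunction for compact induction, but the content is the same.
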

\begin{proof}
Because of the fact that $J=\oplus_{w \neq w_0} I_w$ and the exactness property of Jacquet modules, Proposition~\ref{twistedprop}, it is enough to show that $(I_w)_{\psi,U}=0$ for $w \in [W_{\bf M} \backslash W]$, $w \neq w_0$. This, in turn, is equivalent to showing that
\begin{equation*}
	{\rm Hom}_N(I_w,\mathbb{C}_\psi) \cong {\rm Hom}_{\mathbb{C}}((I_w)_{\psi,U},\mathbb{C})=0.
\end{equation*}
An element $\Phi \in {\rm Hom}_N(I_w,\mathbb{C}_\psi)$ is in the dual of $I_w$ and is an eigenvector for $U$ with eigencharacter $\opsi$, thus $\Phi \in \widetilde{I}_w$ (the contragredient representation of $I_w$).

As a $U$-space 
\begin{equation*}
	I_w=\text{c-}{\rm Ind}_{wUw^{-1}\cap U}^{U}(w(\pi \otimes q^{\left\langle \nu,H_P(\cdot)\right\rangle})), 
\end{equation*}
and \cite{casselman} 2.4.2 says that
\begin{equation*}
	\widetilde{I_w}\cong {\rm I}_{wUw^{-1}\cap U}^{U}(w(\widetilde{\sigma})), 
	\ \sigma = \pi \otimes q^{\left\langle \nu,H_P(\cdot)\right\rangle}.
\end{equation*}
where $\sigma = \pi \otimes q^{\left\langle \nu,H_P(\cdot)\right\rangle}$. Suppose $F:U \rightarrow \widetilde{V}$ is the function corresponding to $\Phi$, then on one hand
\begin{equation*}
	F(u_0u)=\widetilde{\sigma}(\tilde{w}^{-1}u_0\tilde{w})F(u), \ \forall u_0 \in wUw^{-1} \cap U, u \in U,
\end{equation*} 
and on the other it is an eigenvector for $U$ so that
\begin{equation*}
   F(u)=\opsi(u)F(1), \ \forall u\in U,
\end{equation*}
thus
\begin{equation*}
   \widetilde{\sigma}(\tilde{w}^{-1}u_0\tilde{w})F(1)=\opsi(u_0)F(1), \ \forall u_0\in wUw^{-1} \cap U. 
\end{equation*}
The representation $\widetilde{\sigma}$ is trivial on $N$, so in order to get $F=0$ all that is needed is to find a $u_0\in wN w^{-1} \cap U$ for which $\psi(u_0)\neq 1$ . But for $w\in [W_{\bf M} \backslash W], \ w\neq w_0$, there exists an $\alpha \in \Delta$ with $w\alpha \in \Sigma^+ - \Sigma_{\bf M}^+$. For such an $\alpha$, choose $u_0\in N_\alpha$ with $\psi(u_0)\neq 1$, which can be done since $\psi$ is non-degenerate.
\end{proof}

Given $p \in P$, let $m_p$ and $n_p$ be the components of $p$ when using the Levi decomposition of $P$, so that $p=m_pn_p \in M N$. We also fix a left Haar measure on $P$. The following is Proposition~1.1 of \cite{lomeli2009} and its corollary.

\begin{proposition}
For each $\nu \in \mathfrak{a}_{M,\mathbb{C}}^*$, there is a surjective map $\mathcal{P}_\nu :C_c^{\infty}(G,V) \rightarrow I(\nu,\pi)$, given by
\begin{equation*}
   \mathcal{P}_\nu \varphi(g) = \int_{P} q^{- \left\langle \nu,H_P(m_p)\right\rangle}
   \delta_P^{1/2}(p) \pi^{-1}(m_p) \varphi(pg)dp \ .
\label{Pnu}
\end{equation*}
\end{proposition}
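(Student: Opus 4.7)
The plan is to verify the proposition in three steps: well-definedness, the equivariance condition that lands the image in $I(\nu,\pi)$, and surjectivity.

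First, I would check that the integral defining $\mathcal{P}_\nu\varphi(g)$ converges. Since $\varphi \in C_c^\infty(G,V)$, the set $\{p \in P : pg \in \operatorname{supp}(\varphi)\}$ is the intersection of the closed subset $P$ with the compact set $\operatorname{supp}(\varphi) \cdot g^{-1}$, hence is compact; the integrand is smooth in $p$, so the integral converges absolutely and $\mathcal{P}_\nu\varphi$ is smooth on $G$.

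Next, to see that $\mathcal{P}_\nu\varphi \in I(\nu,\pi)$, I would verify the transformation law. Fix $p_0 = m_0 n_0 \in P$ and substitute $p \mapsto p' p_0^{-1}$ in the integral defining $\mathcal{P}_\nu\varphi(p_0 g)$. Using that $dp$ is left Haar (so $\int f(p) dp = \delta_P(p_0)\int f(p' p_0^{-1})\,dp'$), that $H_P$ is a homomorphism (so $H_P(m_{p'p_0^{-1}}) = H_P(m_{p'}) - H_P(m_0)$), and that $\delta_P$ and $\pi^{-1}$ are multiplicative, a straightforward bookkeeping yields
\begin{equation*}
   \mathcal{P}_\nu\varphi(p_0 g) = \delta_P^{1/2}(p_0)\, q^{\langle \nu, H_P(m_0)\rangle}\, \pi(m_0)\, \mathcal{P}_\nu\varphi(g) = q^{\langle \nu+\rho_P, H_P(m_0)\rangle}\, \pi(m_0)\, \mathcal{P}_\nu\varphi(g),
\end{equation*}
since $\delta_P^{1/2}(p_0) = q^{\langle \rho_P, H_P(m_0)\rangle}$. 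This is precisely the defining equivariance condition of $I(\nu,\pi)$.

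For surjectivity, my plan is to construct an explicit section. Since $P\backslash G$ is compact (it is a partial flag variety), one can fix a compact set $\Omega \subset G$ with $P\Omega = G$ and choose a nonnegative $\phi_0 \in C_c^\infty(G)$ that is strictly positive on $\Omega$; then $\psi(g) := \int_P \phi_0(pg)\, \delta_P(p)\, dp$ is strictly positive and left $P$-invariant, descends to the compact $P\backslash G$, and is bounded away from $0$. Setting $\phi := \phi_0/\psi$ yields an element of $C_c^\infty(G)$ with $\int_P \phi(pg)\, \delta_P(p)\, dp \equiv 1$. Given $f \in I(\nu,\pi)$, define $T(f)(g) := \phi(g) f(g) \in C_c^\infty(G,V)$; a direct computation using the transformation law for $f$ collapses the exponentials to $q^{\langle \rho_P, H_P(m_p)\rangle}\delta_P^{1/2}(p) = \delta_P(p)$, giving $\mathcal{P}_\nu(T(f))(g) = f(g) \int_P \phi(pg)\delta_P(p)\,dp = f(g)$, so $\mathcal{P}_\nu \circ T = \mathrm{id}$ and $\mathcal{P}_\nu$ is surjective.

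The main obstacle is the section construction in the last step: one must ensure that the normalizing function $\phi$ can be chosen with compact support while producing the constant $1$ after integration along fibers of $P \backslash G$. The compactness of $P\backslash G$ and the fact that $\delta_P(p)\,dp$ is the right Haar measure on $P$ make this routine, but it is where the geometry of the parabolic enters; the equivariance computation in Step~2 is bookkeeping once the Haar-measure convention is fixed.
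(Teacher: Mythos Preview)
Your proof is correct and complete. The paper itself does not give a proof of this proposition; it simply attributes the result to Proposition~1.1 of \cite{lomeli2009} and records its corollary. Your three-step argument (convergence, equivariance, explicit section via a cut-off function normalized along $P$-fibers) is the standard approach, and the computations in Steps~2 and~3 are carried out correctly once the left Haar measure convention on $P$ is fixed. The only point worth double-checking is that your normalizing function $\psi$ is everywhere positive: this follows because $P\Omega = G$ guarantees that for every $g$ some translate $pg$ lies in the open set where $\phi_0>0$, so the fiber integral is strictly positive, and left $P$-invariance plus compactness of $P\backslash G$ then bounds $\psi$ away from zero as you note.
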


\begin{corollary}\label{Pnucor}
For each $\nu \in \mathfrak{a}_{M,\mathbb{C}}^*$:
\begin{enumerate}
\item Given a compact open subgroup $K$ of $G$ and a function $f \in I(\nu,\pi)^K$, there is a (right) $K$-invariant function $\varphi \in C_c^\infty(G,V)$ with $\mathcal{P}_\nu \varphi = f$;
\item $\mathcal{P}_\nu$ sends $C_c^{\infty}(P w_0 B,V)$ onto $I(\nu,\pi)_{d_P}$. 
\end{enumerate}
\end{corollary}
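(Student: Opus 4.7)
For part~(1), the plan is to invoke the surjectivity of $\mathcal{P}_\nu$ established in the preceding Proposition to obtain some $\varphi_0 \in C_c^\infty(G,V)$ with $\mathcal{P}_\nu \varphi_0 = f$, and then average against $K$ by setting
\[
   \varphi(g) = \frac{1}{\mathrm{vol}(K)}\int_K \varphi_0(gk)\,dk.
\]
Since $\mathrm{supp}(\varphi) \subseteq \mathrm{supp}(\varphi_0)\cdot K$, the function $\varphi$ remains compactly supported and locally constant, and is manifestly right $K$-invariant. The operator $\mathcal{P}_\nu$ commutes with right translations, and $f$ is $K$-fixed, so a routine interchange of integrals gives $\mathcal{P}_\nu\varphi = f$.

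For part~(2) I would prove the two inclusions separately. The inclusion $\mathcal{P}_\nu(C_c^\infty(Pw_0B,V)) \subseteq I(\nu,\pi)_{d_P}$ is immediate from the left $P$-invariance of the big cell: if $\mathrm{supp}(\varphi) \subseteq Pw_0B$ and $g \notin Pw_0B$, then $pg \notin Pw_0B$ for every $p \in P$, so $\varphi(pg) = 0$ throughout the integral defining $\mathcal{P}_\nu\varphi(g)$.

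For the reverse inclusion, I would exploit the Bruhat parametrization $P \times \overline N \xrightarrow{\sim} Pw_0 B$, $(p,n) \mapsto pw_0n$, implicit in Casselman's Proposition~6.3.2 already cited in the proof of Proposition~\ref{welldefined}. A function $f \in I(\nu,\pi)_{d_P}$ is determined by the compactly supported, locally constant function $\tilde f : \overline N \to V$, $\tilde f(n) = f(w_0 n)$. I would then fix a compact open subgroup $P_0 \subseteq P$ and a compact open set $\overline N_0 \subseteq \overline N$ containing $\mathrm{supp}(\tilde f)$, and define $\varphi$ to vanish outside $P_0 w_0 \overline N_0$ and, for $p \in P_0$ and $n \in \overline N_0$, to equal
\[
   \varphi(pw_0n) = \mathrm{vol}(P_0)^{-1}\, q^{\langle \nu, H_P(m_p)\rangle}\delta_P^{-1/2}(p)\,\pi(m_p)\tilde f(n).
\]
Uniqueness of the Bruhat factorization and smoothness of $\pi$ ensure $\varphi \in C_c^\infty(Pw_0B,V)$. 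Substituting into the integral defining $\mathcal{P}_\nu$, the factors $\delta_P^{1/2}(p)q^{-\langle \nu, H_P(m_p)\rangle}\pi^{-1}(m_p)$ coming from $\mathcal{P}_\nu$ cancel those built into $\varphi$, and integration over $P_0$ contributes $\mathrm{vol}(P_0)$, yielding $\mathcal{P}_\nu\varphi(w_0 n') = \tilde f(n')$ for $n' \in \overline N_0$; both sides vanish for $n' \notin \overline N_0$ and for $g \notin Pw_0B$, so $\mathcal{P}_\nu \varphi = f$ on all of $G$.

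The main obstacle is essentially bookkeeping rather than analytical: one has to verify that the explicit preimage in~(2) is well-defined on the open cell, lies in $C_c^\infty(Pw_0B,V)$, and that the normalization factors $\delta_P^{1/2}$, $q^{\langle \nu, H_P\rangle}$, and $\mathrm{vol}(P_0)$ have been paired so as to reproduce $f$ exactly. No additional representation-theoretic input beyond the preceding Proposition and the Bruhat parametrization of the big cell is required.
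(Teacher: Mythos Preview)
The paper does not actually prove this corollary: immediately before the statement it says ``The following is Proposition~1.1 of \cite{lomeli2009} and its corollary,'' and no argument is supplied. So there is no proof in the paper to compare yours against; you have written out what the paper only cites.

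Your argument is correct. Part~(1) is the standard averaging trick, using that $\mathcal{P}_\nu$ involves only a left integral over $P$ and hence commutes with right translation by $K$. For part~(2), the forward inclusion is exactly as you say, and the reverse inclusion via an explicit section on the big cell is the natural construction. One small point of bookkeeping to align with the paper's conventions: in the proof of Proposition~\ref{welldefined} the paper parametrizes the big cell using $w_0^{-1}$, i.e.\ it works with $n\mapsto f(w_0^{-1}n)$ on $\overline N$ rather than $n\mapsto f(w_0 n)$, and the identification $I_{d_P}\cong \text{c-}\mathrm{Ind}_{w_0Pw_0^{-1}\cap B}^{B}\,w_0(\pi\otimes\cdots)$ is set up accordingly. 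Your formula for $\varphi$ goes through verbatim once you replace $w_0$ by $w_0^{-1}$ in the parametrization $(p,n)\mapsto p\,w_0^{-1} n$; with that adjustment everything matches the ambient notation and the cancellation you describe is exactly right.
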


We now turn towards the rationality of local Whittaker models. In fact, they are Laurent polynomials. We begin by proving this for ${\rm I}_{d_P}$. We use the notion of a function on $q^\nu \in X_{\rm nr}(\textbf{M})$. For this, we fix a basis $\chi_1, \ldots, \chi_d$ of the free $\mathbb{Z}$-module $X(\textbf{M})$, so that $\nu = s_1\otimes \chi_1 + \cdots + s_d\otimes \chi_d$, $s_i \in \mathbb{C}$. Then, what is meant by a polynomial (resp. rational) function on $q^\nu$ is simply a polynomial (resp. rational) function on the variables $Z_1,Z_1^{-1}, \ldots, Z_d,Z_d^{-1}$, where $Z_i = q^{s_i}$. Note that the affine algebra of $\mathbb{C}^\times$ can be thought as $\mathbb{C} \left[ Z,Z^{-1} \right]$. When $\bf M$ is maximal, the basis is dictated by $\tilde{\alpha}$ of equation~\eqref{basis}.

\begin{lemma}
   For any $\varphi \in C_c^{\infty}(P w_0 B,V)$, the function $\lambda_\psi(\nu,\pi) \mathcal{P}_\nu \varphi$ is a holomorphic function on $\nu \in \mathfrak{a}_{M,\mathbb{C}}^*$. In fact, $\lambda_\psi(\nu,\pi) \mathcal{P}_\nu \varphi$ is a polynomial in $q^\nu$.
\label{holomorphicl}
\end{lemma}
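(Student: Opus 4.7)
The plan is to unfold the definition of $\lambda_\psi(\nu,\pi)\mathcal{P}_\nu\varphi$, swap the order of integration, and then exploit both the compact support of $\varphi$ and the discreteness of the image of $H_P$ to express the answer as a finite sum of monomials in $q^\nu$.

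First, substituting the formula for $\mathcal{P}_\nu\varphi$ into the definition of the Whittaker functional, I would obtain
\[
   \lambda_\psi(\nu,\pi)\mathcal{P}_\nu\varphi = \int_{\overline{N}} \int_P q^{-\langle \nu, H_P(m_p)\rangle}\delta_P^{1/2}(p)\,\lambda_{\psi_{w_0}}\!\bigl(\pi^{-1}(m_p)\varphi(p w_0^{-1}n)\bigr)\,\overline{\psi}(n)\,dp\,dn.
\]
The map $(p,n)\mapsto pw_0^{-1}n$ is a homeomorphism of $P\times \overline{N}$ onto the open Bruhat cell $Pw_0B$; this is the product structure already implicit in Proposition~\ref{welldefined}, coming from $U = \overline{N}(\overline{M}\cap U)$ together with $\overline{M}\cap U \subset w_0Pw_0^{-1}$. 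Since $\mathrm{supp}(\varphi)$ is compact in $Pw_0B$, its preimage is compact in $P\times \overline{N}$; this both justifies Fubini and confines the integration to compact subsets $C\subset P$ and $\Omega\subset \overline{N}$.

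The key observation is then that $H_P\colon M\to \mathrm{Hom}_{\mathbb{Z}}(X^*(\mathbf{M}),\mathbb{Z})$ is locally constant with values in a discrete lattice, being trivial on the compact open subgroup $M\cap\mathcal{K}$. Hence $H_P$ takes only finitely many values, say $\lambda_1,\ldots,\lambda_r$, on the compact image of $C$ in $M$. Partitioning $C$ into the clopen subsets $C_j = \{p\in C : H_P(m_p) = \lambda_j\}$ yields
\[
   \lambda_\psi(\nu,\pi)\mathcal{P}_\nu\varphi = \sum_{j=1}^r q^{-\langle \nu, \lambda_j\rangle}\, c_j,
\]
with each
\[
   c_j = \int_{\Omega}\int_{C_j}\delta_P^{1/2}(p)\,\lambda_{\psi_{w_0}}\!\bigl(\pi^{-1}(m_p)\varphi(pw_0^{-1}n)\bigr)\,\overline{\psi}(n)\,dp\,dn
\]
a constant independent of $\nu$. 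In the notation introduced immediately before the lemma, this is manifestly a polynomial in the variables $q^{\pm s_1},\ldots, q^{\pm s_d}$, hence a polynomial in $q^\nu$; in particular it is holomorphic in $\nu\in\mathfrak{a}_{M,\mathbb{C}}^*$.

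The main obstacle is the verification that $(p,n)\mapsto pw_0^{-1}n$ is a homeomorphism of $P\times \overline{N}$ onto $Pw_0B$, equivalently that the compact support of $\varphi$ localizes the $P$-integration uniformly as $n$ varies over $\overline{N}$. The rest is routine bookkeeping, since $\lambda_{\psi_{w_0}}$ enters only as a continuous linear functional on $V$ and the analytic structure is entirely carried by the compactly supported smooth integrand.
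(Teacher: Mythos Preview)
Your proposal is correct and follows essentially the same approach as the paper: unfold the definitions, use the product decomposition $Pw_0^{-1}\overline{N}$ of the big cell together with the compact support of $\varphi$ to reduce the double integral to a finite sum, and observe that the $\nu$-dependence enters only through the finitely many values of $q^{-\langle\nu,H_P(\cdot)\rangle}$. The paper packages the integrand into a single compactly supported locally constant function $\varPhi$ on $Pw_0^{-1}\overline{N}$ rather than explicitly partitioning along level sets of $H_P$, but this is a purely cosmetic difference.
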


\begin{proof}
If $\varphi$ is such a function, then $\mathcal{P}_\nu \varphi \in I_{d_P}$ and
\begin{align*}
   \lambda_\psi(&\nu,\pi) \mathcal{P}_\nu \varphi
	= \int_{\overline{N}}\lambda_{\psi_{w_0}}(\mathcal{P}_\nu \varphi(w_0^{-1}n))\opsi(n)dn \\
	&= \lambda_{\psi_{w_0}} \left( \int_{\overline{N}} \int_{P} 
	q^{- \left\langle \nu,H_P(m_p)\right\rangle} \delta_P^{1/2}(p) \pi^{-1}(m_p)
	\varphi(pw_0^{-1}n) \opsi(n) \ dp \ dn \right).
\end{align*}
For $y=pw_0^{-1}n \in P w_0^{-1} \overline{N}$ let 
\begin{equation*}
	\varPhi(y) = \varPhi (pw_0^{-1}n) = \pi^{-1}(m_p) \delta_P^{1/2}(p) f(pw_0^{-1}n) \opsi(n)\ ,
\end{equation*}
and extend $q^{\left\langle v,H_P(\cdot) \right\rangle}$ to $P w_0^{-1}\overline{N}$ in an obvious way, so that 
\begin{equation*}
	\lambda_\psi(\nu,\pi)f = \lambda_{\psi_{w_0}} \left( \int_{P w_0^{-1} \overline{N}} q^{- \left\langle \nu, H_P(y) \right\rangle} \varPhi(y)dy \right).
\end{equation*}
Because $\varphi$ is a locally constant function of compact support, then so is $\varPhi$. Hence this last integral, as with all of the above integrals, can be written as a finite sum. By doing so, it is not hard to see that $\lambda_\psi(\nu,\pi)\mathcal{P}_\nu \varphi$ is a polynomial in $q^\nu$.
\end{proof}

\begin{remark}
Assuming Proposition~\ref{welldefined}, the following proposition is derived in \S~8.2 \cite{kimnotes2004}. However, no proof of holomorphicity of the Whittaker model is found in the literature other than the pricipal series case of \cite{cs1980}. For completion, we extend these results here to the general case.
\end{remark}

\begin{proposition}\label{wproperty}
For any  $f \in I(\nu,\pi)_{d_P}$,
\begin{equation*}
   \lambda_\psi(\nu,\pi)(I(\nu,\pi)(u)\cdot f)= \psi(u) \lambda_\psi(\nu,\pi)f \ .
\end{equation*}
\end{proposition}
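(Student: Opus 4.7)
The plan is to compute $\lambda_\psi(\nu,\pi)({\rm I}(\nu,\pi)(u)f)$ directly from its integral definition and to transform the resulting expression, by a sequence of changes of variables, into $\psi(u)$ times the defining integral for $\lambda_\psi(\nu,\pi)f$. As a preliminary observation, $I(\nu,\pi)_{d_P}$ is stable under right translation by $B$ because $Pw_0B\cdot b^{-1}=Pw_0B$ for every $b\in B$; in particular ${\rm I}(\nu,\pi)(u)f$ lies again in $I(\nu,\pi)_{d_P}$, so the left-hand side is unambiguously defined by the same formula.

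The first step is to decompose $u$ with respect to $\overline{\bf P}$. Since $\overline{\bf P}\supset{\bf B}$, one has $U=\overline{N}\cdot U_{\overline M}$, where $U_{\overline M}=U\cap\overline M$; writing $u=n_u v_u$ with $n_u\in\overline N$ and $v_u\in U_{\overline M}$, and using that $\psi$ is a character of $U$, we reduce to handling the two factors separately. The $n_u$-contribution is treated by the substitution $n\mapsto nn_u$ in the $\overline N$-integral: the Haar measure is invariant, the argument of $f$ becomes $w_0^{-1}n' v_u$, and $\opsi(n' n_u^{-1})=\opsi(n')\psi(n_u)$ pulls the factor $\psi(n_u)$ outside.

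The main computation is the $v_u$-contribution. Since $v_u\in\overline M$ normalizes $\overline N$, we rewrite $n' v_u = v_u\cdot(v_u^{-1}n'v_u)=v_u n''$ with $n''\in\overline N$. Setting $m_u=w_0^{-1}v_uw_0$, the identities $w_0\theta=\otheta$ and $w_0=w_lw_{l,{\bf M}}$ imply that $w_0^{-1}$ sends $\Sigma_{\overline M}^+$ onto $\Sigma_M^+$, so $m_u\in U_M\subset P$. Then $P$-equivariance of $f$, combined with the fact that $m_u$ is unipotent (so $\delta_P(m_u)=1$ and $H_P(m_u)=0$), gives $f(w_0^{-1}n'v_u)=\pi(m_u)f(w_0^{-1}n'')$. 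Applying $\lambda_{\psi_{w_0}}$ produces $\psi_{w_0}(m_u)$, and the compatibility relation~\eqref{compatible} identifies this with $\psi(w_0m_uw_0^{-1})=\psi(v_u)$, which is exactly the factor we need.

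It remains to convert the remaining integral into $\lambda_\psi(\nu,\pi)f$. The change of variables $n'=v_u n'' v_u^{-1}$ on $\overline N$ has trivial Jacobian because $v_u$ is unipotent; moreover $\opsi(v_u n'' v_u^{-1})=\opsi(n'')$, since $\psi$ factors through the abelianization $U/[U,U]$ while $v_u,n''\in U$. The integral then reassembles as $\lambda_\psi(\nu,\pi)f$, and combining all factors yields $\psi(n_u)\psi(v_u)\lambda_\psi(\nu,\pi)f=\psi(u)\lambda_\psi(\nu,\pi)f$. The main obstacle, such as it is, is the bookkeeping of root-combinatorial identities, namely $w_0^{-1}v_uw_0\in U_M$ and $v_u^{-1}\overline N v_u=\overline N$; these rest on the two facts that $w_0\in[W_{\bf M}\backslash W]$ sends $\theta$ to $\otheta$, and that $\overline{\bf M}$ is the Levi of the standard parabolic $\overline{\bf P}$.
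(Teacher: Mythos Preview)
The paper does not actually supply a proof of this proposition: the remark immediately preceding it refers the reader to \S 8.2 of Kim's lecture notes \cite{kimnotes2004}, and the text then moves on to the projection operators $\mathcal{P}_{\psi,U_1}$. So there is no ``paper's own proof'' to compare against; your task here was effectively to reconstruct the omitted computation.

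Your argument is correct and is the standard one. The key steps are all in place: the decomposition $U=\overline{N}\cdot(U\cap\overline{M})$, the right-translation in $\overline{N}$ for the first factor, and for the second factor the conjugation identity $w_0^{-1}(U\cap\overline{M})w_0=U_M$ (which follows from $w_0(\theta)=\otheta$) together with the compatibility relation~\eqref{compatible} to convert $\psi_{w_0}(m_u)$ back into $\psi(v_u)$. The two points that sometimes trip people up---that the Jacobian of conjugation by the unipotent element $v_u$ on $\overline{N}$ is trivial, and that $\opsi(v_u n'' v_u^{-1})=\opsi(n'')$ because $\psi$ factors through $U/[U,U]$---are both handled correctly. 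One small stylistic comment: rather than saying you ``reduce to handling the two factors separately,'' you in fact treat them in a single pass (substitute for $n_u$, then peel off $v_u$), which is cleaner and is what your subsequent computation actually does.
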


For any compact open subgroup $U_1$ of $U$, there is a projection operator $\mathcal{P}_{\psi,U_1} : {\rm I}(\nu,\pi) \rightarrow {\rm I}(\nu,\pi)_{\psi,U_1}$ , given by
\begin{equation}
   \mathcal{P}_{\psi,U_1}f(g) = ({\rm meas} U_1)^{-1} \int_{U_1} f(gu) \opsi(u)du \ .
\end{equation}

\begin{lemma}
   For any $f\in {\rm I}(\nu,\pi)_{d_P}$ and $U_1$ a compact open subgroup of $U$,
   \begin{equation*}
      \lambda_\psi(\nu,\pi)(\mathcal{P}_{\psi,U_1} f) = \lambda_\psi(\nu,\pi)f
   \end{equation*}
\end{lemma}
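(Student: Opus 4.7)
The proof is a standard averaging argument which essentially says that $\lambda_\psi(\nu,\pi)$, being a Whittaker functional, only ``sees'' the $\psi$-isotypic part of any vector, so projecting to it via $\mathcal{P}_{\psi,U_1}$ leaves the value of $\lambda_\psi(\nu,\pi)$ unchanged. The technical content is already packed into Proposition~\ref{wproperty}.

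First I would verify that $\mathcal{P}_{\psi,U_1}f$ remains inside the subspace $I(\nu,\pi)_{d_P}$, so that $\lambda_\psi(\nu,\pi)$ can be applied to it at all. By the definition~\eqref{Idtheta}, this amounts to checking that $\mathcal{P}_{\psi,U_1}f(g) = 0$ whenever $g \notin P w_0 B$. Since $U_1 \subset U \subset B$, right translation by $u \in U_1$ preserves the Bruhat cell $P w_0 B$; if $g \notin P w_0 B$ then $gu \notin P w_0 B$ for every $u \in U_1$, and so $f(gu) = 0$ throughout the region of integration. Hence $\mathcal{P}_{\psi,U_1}f \in I(\nu,\pi)_{d_P}$.

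Next, I would rewrite the projection as a vector-valued integral in $I(\nu,\pi)_{d_P}$:
\begin{equation*}
\mathcal{P}_{\psi,U_1} f = ({\rm meas}\,U_1)^{-1}\int_{U_1} I(\nu,\pi)(u)f \cdot \opsi(u)\,du.
\end{equation*}
Because $f$ is smooth, it is fixed under a compact open subgroup of $G$, so the integrand takes only finitely many values and the integral reduces to a finite sum; consequently the linear functional $\lambda_\psi(\nu,\pi)$ commutes with it. Applying $\lambda_\psi(\nu,\pi)$ and using Proposition~\ref{wproperty}, which asserts $\lambda_\psi(\nu,\pi)(I(\nu,\pi)(u)f) = \psi(u)\lambda_\psi(\nu,\pi)f$, the integrand becomes $\psi(u)\opsi(u)\lambda_\psi(\nu,\pi)f = \lambda_\psi(\nu,\pi)f$, a constant. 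Averaging this constant over $U_1$ yields exactly $\lambda_\psi(\nu,\pi)f$, proving the identity.

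The only potential obstacle is the support-preservation check, which is unconditional in our setting because the averaging subgroup $U_1$ lies in $B$. Once that point is clear, the rest of the argument is purely formal and depends only on the prior proposition.
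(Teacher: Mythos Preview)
Your proof is correct and follows essentially the same route as the paper: both reduce the identity to Proposition~\ref{wproperty} by writing $\mathcal{P}_{\psi,U_1}f$ as an average of right translates and then cancelling $\psi(u)\opsi(u)$. The paper unfolds the integral over $\overline{N}$ directly and swaps the order of integration, while you argue more abstractly at the level of the functional; in addition, you make explicit the support-preservation check ensuring $\mathcal{P}_{\psi,U_1}f \in I(\nu,\pi)_{d_P}$, which the paper leaves implicit but which is indeed needed since $\lambda_\psi(\nu,\pi)$ has only been defined on that subspace at this stage.
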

\begin{proof}
   Notice that
   \begin{align*}
      \lambda_\psi&(\nu,\pi)(\mathcal{P}_{\psi,U_1} f) \\
      &= ({\rm meas} U_1)^{-1} \int_{\overline{N}} \int_{U_1} \lambda_{\psi_{w_0}}(f(w_0^{-1}nu))
         \opsi(u)\opsi(n) \, du dn \\
      &= ({\rm meas} U_1)^{-1} \int_{U_1} \lambda_{\psi}(I(\nu,\pi)(u) \cdot f(w_0^{-1}n))
         \opsi(u) \, du \ ,
        \end{align*}
then Proposition~\ref{wproperty} yields the required identity.
\end{proof}

The next remark follows from the definition, Proposition~\ref{welldefined} and the computations for Proposition~\ref{wproperty}. It points towards the steps needed to extend the definition of $\lambda_\psi(\nu,\pi)$ from ${\rm I}(\nu,\pi)_{d_P}$ to all of ${\rm I}(\nu,\pi)$. 

\begin{remark}
Let $f \in {\rm I}(\nu,\pi)$. Suppose we have a compact open subgroup $U_1$ of $U$, such that $\mathcal{P}_{\psi,U_1}f \in I(\nu,\pi)_{d_P}$. Then for any compact open subgroup $N_1$ of $\overline{N}$ with $N_1 \supset U_1 \cap \overline{N}$
\begin{equation*}
   \int_{N_1} \lambda_{\psi_{w_0}} (f(w_0^{-1}n))\opsi(n)  dn 
   = \int_{N_1} \lambda_{\psi_{w_0}} (\mathcal{P}_{\psi,U_1} f(w_0^{-1}n)) \opsi(n)  dn\ .
\end{equation*}
\end{remark}

Let $f \in {\rm I}(\nu,\pi)$ and suppose we have a compact open subgroup $U_1$ of $U$, such that $\mathcal{P}_{\psi,U_1}f \in {\rm I}(\nu,\pi)_{d_P}$. Under these assumptions we can now define
\begin{equation}\label{wdef}
\begin{split}
   \lambda_\psi(\nu,\pi) f 
   &= \lim_{N_1} \, \int_{N_1} \lambda_{\psi_{w_0}} 
   (\mathcal{P}_{\psi,U_1} f(w_0^{-1}n))\opsi(n)  dn  \\
   &= \lambda_\psi(\nu,\pi) \mathcal{P}_{\psi,U_1} f \ ,
\end{split}
\end{equation}
where the limit is taken over all compact open subgroups $N_1$ of $\overline{N}$ such that $N_1 \supset \overline{N} \cap U_1$. Also, notice that the subgroup $U_1$ can be replaced by any other compact open subgroup of $U$ containing $U_1$.

All that remains is to obtain a compact open subgroup $U_1$ with the above properties. To study the analytic behavior of $\lambda_\psi(\nu,\pi)$, it is important to obtain such a group independently of $\nu$.

\begin{proposition}
   Let $K$ be a compact open subgroup of $G$. There exists a compact open subgroup $U_1 \subset U$ such that for every $\nu \in \mathfrak{a}_{M,\mathbb{C}}^*$ and every $f \in I(\nu,\pi)^K$ the function $\mathcal{P}_{\psi,U_1} f$ lies in $I(\nu,\pi)_{d_P}$.
\label{wsupport}
\end{proposition}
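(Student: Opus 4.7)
The plan is to upgrade Lemma~\ref{twistedj} from its pointwise statement to a uniform-in-$\nu$ vanishing, by combining the compact realization of $I(\nu,\pi)^K$ (which is $\nu$-independent as a vector space) with the observation that each Bruhat graded piece $I_w$, viewed as a $U$-module, is intrinsically independent of $\nu$. First, fix a good maximal compact $K_0 \supset K$ with $G = PK_0$. Restriction $f \mapsto f|_{K_0}$ identifies $I(\nu,\pi)$, as a vector space, with a $\nu$-independent space $\mathcal{V}$ of functions on $K_0$; on this model the $K$-action (for $K \subset K_0$) is just right translation, so $I(\nu,\pi)^K$ corresponds to a fixed finite-dimensional subspace $\mathcal{V}^K$ by admissibility, and the condition $f \in I_{d_P}$ translates into the $\nu$-independent condition $f|_{K_0 \cap (G\setminus Pw_0 B)} = 0$.

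Next, consider each $I_w = \text{c-}{\rm Ind}_{wPw^{-1}\cap B}^B(w\tau)$, where $\tau = \pi \otimes q^{\langle \nu, H_P(\cdot)\rangle} \delta_P^{1/2}$. Any $u_0 \in wPw^{-1} \cap U$ is unipotent, so $w^{-1}u_0 w \in P$ has unipotent $M$-component $m$; the unramified character $q^{\langle \nu, H_P(\cdot)\rangle}$ and the modulus $\delta_P^{1/2}$ are trivial on unipotent elements of $M$, hence the inducing character restricted to $wPw^{-1} \cap U$ is $u_0 \mapsto \pi(m)$, independent of $\nu$. This shows that $I_w|_U$ is a $\nu$-independent $U$-module. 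Moreover the projection $\mathcal{V}^K \to I_w$ (via restriction to the cell $PwB$ mod higher cells) has matrix coefficients that are Laurent polynomials in $q^\nu$ of bounded degree, because $H_P$ takes only finitely many integer values on the compact support of any $f \in \mathcal{V}^K$. Consequently the total image of $\mathcal{V}^K$ in $I_w$, as $\nu$ ranges over $\mathfrak{a}_{M,\mathbb{C}}^*$, is contained in a fixed finite-dimensional subspace $F_w^* \subset I_w|_U$.

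For each $w \in [W_{\bf M}\backslash W] \setminus \{w_0\}$, Lemma~\ref{twistedj} applied to a basis of $F_w^*$ produces a compact open subgroup $U_1^{(w)} \subset U$ such that $\mathcal{P}_{\psi, U_1^{(w)}}$ annihilates $F_w^*$. By the enlargement property observed just before equation~\eqref{wdef}, any compact open subgroup $U_1 \subset U$ containing all the (finitely many) $U_1^{(w)}$ still annihilates every $F_w^*$. Since $\mathcal{P}_{\psi, U_1}$ commutes with the $U$-equivariant projections $I_n \twoheadrightarrow I_n/I_{n+1} = \bigoplus_{d(w)=n} I_w$, for any $\nu$ and any $f \in I(\nu,\pi)^K$ the image of $\mathcal{P}_{\psi, U_1} f$ in each $I_w$ with $w \neq w_0$ vanishes, forcing $\mathcal{P}_{\psi, U_1} f \in I_{d_P}$ as required. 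The main obstacle is precisely this uniformity in $\nu$: it is resolved by combining the intrinsic $\nu$-independence of $I_w|_U$ with the bounded Laurent-polynomial dependence of the map $\mathcal{V}^K \to I_w$, which together confine the problem to a single finite-dimensional subspace of $I_w|_U$ where Lemma~\ref{twistedj} applies once and for all.
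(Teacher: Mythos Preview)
Your approach is genuinely different from the paper's, which bypasses all structural analysis of $I_w$ and instead argues by Baire category: let $U_2\subset U_3\subset\cdots$ exhaust $U$, set $\Lambda_n=\{\nu:\mathcal{P}_{\psi,U_n}\text{ works for }I(\nu,\pi)^K\}$, note $\bigcup\Lambda_n=\mathfrak{a}_{M,\mathbb{C}}^*$, apply Baire to get some $\Lambda_n$ with nonempty interior, and then use continuity of $\nu\mapsto\mathcal{P}_\nu\varphi$ (via Corollary~\ref{Pnucor}) to show the complement of $\Lambda_n$ is open, forcing $\Lambda_n$ to be everything. Your structural route, exploiting the $\nu$-independence of $I_w|_U$, is more explicit and in principle more informative.

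However, your write-up has a genuine gap. The ``projection $\mathcal{V}^K\to I_w$ via restriction to the cell $PwB$ mod higher cells'' is not well-defined for $d(w)>0$: an element $f_\nu\in I(\nu,\pi)^K$ is generically nonvanishing on all of $G$, so its restriction to $PwB$ is \emph{not} compactly supported modulo $P$ and hence does not lie in $I_w=\text{c-}\mathrm{Ind}$. The map $I\to I_w$ only exists on $I_{d(w)}$, not on all of $I$. Concretely, for $G=\mathrm{GL}_3$, $P=B$, $w=s_1$, the function $x\mapsto f_\nu(s_1^{-1}u_{\alpha_1}(x))$ is nonzero for all $x\in F$, and for $|x|>1$ its $\nu$-dependence is $q^{\langle\nu,c\cdot v(x)\rangle}$ with $v(x)$ unbounded; so the span over all $\nu$ is infinite-dimensional, contradicting your claim about $F_w^*$. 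Your justification that ``$H_P$ takes only finitely many integer values on the compact support of any $f\in\mathcal{V}^K$'' does not apply here, since the relevant support lies in the noncompact cell, not in $K_0$.

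The fix is to run your argument \emph{inductively down the filtration}. At stage $n=0$ the map $I^K\to I_0/I_1\cong V$ is $f_0\mapsto f_0(1)$, visibly $\nu$-independent with finite-dimensional image; choose $U_1^{(0)}$ killing it. Then $\mathcal{P}_{\psi,U_1^{(0)}}f_\nu\in I_1\cap I(\nu,\pi)^{K'}$ for a fixed smaller $K'$ (since $\mathcal{P}_{\psi,U_1^{(0)}}$ is convolution by a fixed compactly supported function). Now the map $I_1^{K'}\to I_1/I_2=\bigoplus_{d(w)=1}I_w$ \emph{is} defined, and here your finiteness argument works: under the compact model $I_1^{K'}\cong\{f_0\in\mathcal{V}^{K'}:f_0|_{K_0\cap P}=0\}$ the support of the image in each $I_w$ is $\{u:f_0(k(w^{-1}u))\neq 0\}$, which is $\nu$-independent and (being the image of a finite-dimensional space of $f_0$'s) lies in a fixed compact $C_w\subset (wPw^{-1}\cap U)\backslash U$. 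On a compact set of representatives for $C_w$, Iwasawa is continuous, so $H_P(p(w^{-1}u))$ takes finitely many lattice values, and your Laurent-polynomial/bounded-degree claim is now valid, yielding a fixed finite-dimensional $F_w^{(1)}\subset I_w$. Choose $U_1^{(1)}\supset U_1^{(0)}$ killing each $F_w^{(1)}$, and iterate. After $d_P$ steps you obtain the desired $U_1$. This is the argument your sketch gestures at, but the inductive structure and the reason compact support appears are essential and missing from your version.
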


\begin{proof}
Let $G^*$ be the complement of $P w_0 B$ in $G$ and let $J$ be as in Lemma{~\ref{twistedj}}. The lemma says that $J=J_\psi(U)$. Hence, for any $f_J \in J$ there is a compact open subgroup $U_1$ of $U$ with
\begin{equation*}
   \int_{U_1} f_J(gu)\opsi(u)du=0, \ \forall g \in G^*.
\end{equation*}
This gives $\mathcal{P}_{\psi,U_1} f_J = 0$. Since, $I(\nu,\pi)$ is admissible, one can enlarge $U_1$ so that $\mathcal{P}_{\psi,U_1} f_J = 0$ holds for all $f_J$ in the image of the finite dimensional $I(\nu,\pi)^K$. Because of the exact sequence~(\ref{Usequence}), for any $f \in I(\nu,\pi)$ the function $\mathcal{P}_{\psi,U_1} f$ has support in $P w_0 B$, i.e., $\mathcal{P}_{\psi,U_1} f \in I(\nu,\pi)_{d_P}$.

To choose $U_1$ independently of $\nu$, let $U_2 \subset U_3 \subset \cdots $ be an exhaustive sequence of compact open subgroups of $U$. For each $n > 1$, let $\Lambda_n$ be the subset of all $\nu \in \mathfrak{a}_{M,\mathbb{C}}^*$ such that $\mathcal{P}_{\psi,U_n} f$ has support in $P w_0 B$ for all $f \in I(\nu,\pi)^K$. We have already shown that $\mathfrak{a}_{M,\mathbb{C}}^* = \cup \Lambda_n$. By Baire's lemma, at least one $\Lambda_n$ contains an open subset of $(\mathfrak{a}_\theta^*)_{\mathbb{C}}$.

We show that the complement of such a $\Lambda_n$ is also open, hence $\Lambda_n$ must be all of $(\mathfrak{a}_\theta^*)_{\mathbb{C}}$. Suppose that for some $\nu_0 \in \mathfrak{a}_{M,\mathbb{C}}^*$ there is an $f\in I(\nu_0,\pi)^K$ with support outside of $P w_0 B$. Then, by Corollary~\ref{Pnucor}, there is a $\varphi \in C_c^\infty(G,V)$ such that $\mathcal{P}_{\nu_0} \varphi = f$ and $\varphi$ is (right) $K$-invariant. Then $\mathcal{P}_\nu \varphi \in I(\nu,\pi)^K$ for all $\nu \in \mathfrak{a}_{M,\mathbb{C}}^*$. Now, there is a $g^* \in G^*$ such that $\mathcal{P}_{\nu_0}\varphi(g^*) \neq 0$ (We are using ${\rm supp} f = \left\{ g \in G \vert f(g) \neq 0 \right\}$, which can be verified for any $f \in I(\nu,\pi)$). Now, because $\varphi$, $\nu_0$ and $g^*$ are fixed, it follows from the definition that there is an $\varepsilon > 0$ such that $\mathcal{P}_{\nu_0 + \nu_\varepsilon} \varphi(g^*) \neq 0$ whenever $\left| \nu_0 - \nu_\varepsilon \right| < \varepsilon$.

\end{proof}

Notice that $U_1$ can be replaced by any other compact subgroup $U_*$ of $U$ with $U_* \supset U_1$.  

\begin{corollary}
Fix a compact open subgroup $K$, then there exists a compact open subgroup $U_* \subset U$ such that
\begin{equation*}
   \lambda_\psi(\nu,\pi)f 
   = \int_{U_*} \lambda_{\psi_{\tilde{w}_0}}(f(\tilde{w}_0^{-1}u))\opsi(u)du.
\end{equation*}
for all $f \in {\rm I}(\nu,\pi)^K$.
\end{corollary}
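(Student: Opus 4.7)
The strategy is to combine Proposition~\ref{wsupport} with the $\psi$/$\psi_{\tilde{w}_0}$-compatibility \eqref{compatible} and the Levi decomposition $U = (U\cap \overline{M})\cdot \overline{N}$ to collapse the desired $U_*$-integral to the $\overline{N}$-integral that appears in \eqref{wdef}.

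By Proposition~\ref{wsupport} applied to $K$, fix a compact open $U_1 \subset U$ such that $\mathcal{P}_{\psi,U_1}f \in I(\nu,\pi)_{d_P}$ for every $\nu \in \mathfrak{a}_{M,\mathbb{C}}^*$ and every $f \in I(\nu,\pi)^K$. Since $U_1$ can be replaced by any larger compact open subgroup of $U$ (as noted after \eqref{wdef}), choose $U_* \supset U_1$ of product form $U_* = U_{*,\overline{M}}\cdot N_*$ with $U_{*,\overline{M}} \subset U\cap \overline{M}$ and $N_* \subset \overline{N}$ compact open subgroups, and with $U_{*,\overline{M}}$ normalizing $N_*$; such $U_*$ is obtained by taking a sufficiently small principal congruence subgroup. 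Normalize Haar measures so that $\mathrm{meas}(U_{*,\overline{M}})=1$, and enlarge $N_*$ so that it contains the $\overline{N}$-support of $\mathcal{P}_{\psi,U_*}f$ for every $f \in I(\nu,\pi)^K$, uniformly in $\nu$; the uniformity follows from Corollary~\ref{Pnucor} and a Baire-type argument as in the proof of Proposition~\ref{wsupport}.

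The main identity is the following. For $u = u_{\overline{M}} n \in U$ with $u_{\overline{M}} \in U\cap \overline{M}$ and $n \in \overline{N}$, we have $\tilde{w}_0^{-1}u_{\overline{M}}\tilde{w}_0 \in U_M \subset P$ (since $\tilde{w}_0^{-1}U_{\overline{M}}\tilde{w}_0 = U_M$), and the $P$-equivariance of $f$ together with the triviality of $\delta_P$ and $H_P$ on $U_M$ and the compatibility \eqref{compatible} give
\begin{equation*}
\lambda_{\psi_{\tilde{w}_0}}(f(\tilde{w}_0^{-1}u)) = \psi(u_{\overline{M}})\,\lambda_{\psi_{\tilde{w}_0}}(f(\tilde{w}_0^{-1}n)).
\end{equation*}
Multiplying by $\opsi(u) = \opsi(u_{\overline{M}})\opsi(n)$ collapses the $u_{\overline{M}}$-factor, since $\psi(u_{\overline{M}})\opsi(u_{\overline{M}})=1$. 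Integrating over $U_*$ with the product measure and our normalization yields
\begin{equation*}
\int_{U_*} \lambda_{\psi_{\tilde{w}_0}}(f(\tilde{w}_0^{-1}u))\opsi(u)\,du = \int_{N_*} \lambda_{\psi_{\tilde{w}_0}}(f(\tilde{w}_0^{-1}n))\opsi(n)\,dn.
\end{equation*}

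To identify the right-hand side with $\lambda_\psi(\nu,\pi)f$, start from \eqref{wdef} and Proposition~\ref{welldefined} to write $\lambda_\psi(\nu,\pi)f = \int_{\overline{N}} \lambda_{\psi_{\tilde{w}_0}}(\mathcal{P}_{\psi,U_*}f(\tilde{w}_0^{-1}n))\opsi(n)\,dn$; by the choice of $N_*$, this integral equals $\int_{N_*}\cdots\,dn$. Unfolding $\mathcal{P}_{\psi,U_*}$, swapping the order of integration, applying the same collapse identity to the composite variable $v=nu$, and using the change of variables $\tilde{n}=u_{\overline{M}}^{-1}n\,u_{\overline{M}}$ (which preserves $N_*$ by our normality assumption) reduces to the same expression $\int_{N_*}\lambda_{\psi_{\tilde{w}_0}}(f(\tilde{w}_0^{-1}n))\opsi(n)\,dn$, completing the proof. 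The main obstacle is precisely this final measure-theoretic bookkeeping, where the product decomposition of $U_*$, the normality of $N_*$ under $U_{*,\overline{M}}$, and the normalization of Haar measures together ensure the Jacobians cancel in the change of variables; all of these are achievable uniformly in $\nu$ thanks to Corollary~\ref{Pnucor} and the structure of compact open subgroups of $G$.
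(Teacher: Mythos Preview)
Your argument is correct and follows the same route as the paper: invoke Proposition~\ref{wsupport} to get $U_1$, enlarge to a $U_*$ that also captures the support of $\mathcal{P}_{\psi,U_*}f$, and use the $\psi$/$\psi_{\tilde w_0}$-compatibility together with $U=(U\cap\overline M)\cdot\overline N$ to pass between the $U_*$-integral and the $\overline N$-integral. In fact you make explicit the ``collapse'' identity $\lambda_{\psi_{\tilde w_0}}(f(\tilde w_0^{-1}u_{\overline M}n))=\psi(u_{\overline M})\,\lambda_{\psi_{\tilde w_0}}(f(\tilde w_0^{-1}n))$ that the paper's one-line proof leaves implicit.

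Two small presentational slips to fix: (i) you need $U_*\supset U_1$, so ``sufficiently small principal congruence subgroup'' is backwards---simply take $U_{*,\overline M}$ and $N_*$ large enough to contain the images of $U_1$ and then enlarge $N_*$ to be $U_{*,\overline M}$-stable; (ii) the final unfolding paragraph is unnecessary, since the Remark preceding \eqref{wdef} (with $N_1=N_*\supset U_*\cap\overline N$) directly gives $\int_{N_*}\lambda_{\psi_{\tilde w_0}}(f(\tilde w_0^{-1}n))\opsi(n)\,dn=\int_{N_*}\lambda_{\psi_{\tilde w_0}}(\mathcal P_{\psi,U_*}f(\tilde w_0^{-1}n))\opsi(n)\,dn=\lambda_\psi(\nu,\pi)f$, and no Baire argument is needed for the uniformity of $N_*$ in $\nu$---the support modulo $P$ of elements of $I(\nu,\pi)^K$ is controlled by finitely many $K$-cosets in a fixed maximal compact, independently of $\nu$.
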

\begin{proof}
Choose $U_1$ as in the theorem. ${\rm I}(\nu,\pi)^K$ is finite dimensional and its elements have compact support modulo $P$. Thus, there is a compact open subgroup $U_2$ of $U$ such that $\mathcal{P}_{\psi,U_1}f$ has support in $P w_0 U_2$ for all $f \in {\rm I}(\nu,\pi)^K$. Then let $U_*$ be a compact open subgroup of $U$ containing $U_1$ and $U_2$ to get
   \begin{equation*}
      \lambda_\psi(\nu,\pi)f = \lambda_\psi(\nu,\pi) (\mathcal{P}_{\psi,U_*} f) =
      \int_{U_*} \lambda_{\psi_{\tilde{w}_0}} (f(\tilde{w}_0^{-1}n)) \opsi(n) dn.
   \end{equation*}
\end{proof}

\begin{remark}An alternative way to construct a compact open subgroup with the properties of $U_*$ can be found in Proposition~3.2 of \cite{shahidi1978}. We note that the letter \cite{casselmanletter} further comments on several interesting properties of Whittaker functionals that the author has found very useful.
\end{remark}

\begin{theorem}\label{precise:Wthm}
If $\pi = (\pi,V)$ is a generic representation of $M$ and $\nu \in \mathfrak{a}_{M,\mathbb{C}}^*$, then the induced representation ${\rm I}(\nu,\pi)$ is also generic. Explicitly, $\lambda_\psi(\nu,\pi)$ will be a Whittaker functional of ${\rm I}(\nu,\pi)$. If $\varphi \in C_c^{\infty}(G,V)$, then $\lambda_\psi(\nu,\pi) \mathcal{P}_\nu \varphi$ is a polynomial function on $q^\nu$.
\end{theorem}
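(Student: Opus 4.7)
The plan is to assemble the three ingredients already set up in this section: the construction of $\lambda_\psi(\nu,\pi)$ on the dense piece $I(\nu,\pi)_{d_P}$ (Proposition~\ref{welldefined}), its extension to all of $I(\nu,\pi)$ via the projectors $\mathcal{P}_{\psi,U_1}$ (Proposition~\ref{wsupport} and its corollary), and the polynomial estimate on the dense piece (Lemma~\ref{holomorphicl}). Fix a compact open subgroup $K \subset G$. By Proposition~\ref{wsupport} there is a compact open $U_1 \subset U$, chosen independently of $\nu$, with the property that $\mathcal{P}_{\psi,U_*}$ sends $I(\nu,\pi)^K$ into $I(\nu,\pi)_{d_P}$ for every compact open $U_* \supset U_1$. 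One then sets $\lambda_\psi(\nu,\pi)f := \lambda_\psi(\nu,\pi)\mathcal{P}_{\psi,U_1}f$ for $f \in I(\nu,\pi)^K$ via formula~\eqref{wdef}; the remark preceding~\eqref{wdef} shows the value is unchanged under enlargement of $U_1$, and since any two admissible choices are dominated by a common third, $\lambda_\psi(\nu,\pi)$ is a well-defined linear functional on $I(\nu,\pi) = \bigcup_K I(\nu,\pi)^K$.

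To verify the Whittaker transformation law on all of $I(\nu,\pi)$, given $u_0 \in U$ and $f \in I(\nu,\pi)^K$, enlarge $U_1$ to a compact open $U_* \subset U$ containing $U_1 \cup u_0 U_1 u_0^{-1}$. A direct change of variables in the integral defining $\mathcal{P}_{\psi,U_*}$ yields
$$\mathcal{P}_{\psi,U_*}\bigl(I(\nu,\pi)(u_0)f\bigr) \;=\; \psi(u_0)\,I(\nu,\pi)(u_0)\mathcal{P}_{\psi,U_*}f,$$
with both sides lying in $I(\nu,\pi)_{d_P}$. Applying Proposition~\ref{wproperty} on the dense piece, together with the invariance of $\lambda_\psi(\nu,\pi)$ under the enlargement $U_1 \subset U_*$, gives $\lambda_\psi(\nu,\pi)(I(\nu,\pi)(u_0)f) = \psi(u_0)\lambda_\psi(\nu,\pi)f$. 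Thus $I(\nu,\pi)$ is $\psi$-generic with Whittaker functional $\lambda_\psi(\nu,\pi)$.

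For the polynomial claim, let $\varphi \in C_c^\infty(G,V)$ and let $K$ be a compact open subgroup right-stabilising $\varphi$, so that $\mathcal{P}_\nu\varphi \in I(\nu,\pi)^K$ for every $\nu$. By the corollary to Proposition~\ref{wsupport}, with $U_*$ independent of $\nu$,
$$\lambda_\psi(\nu,\pi)\mathcal{P}_\nu\varphi \;=\; \int_{U_*}\lambda_{\psi_{\tilde{w}_0}}\!\bigl(\mathcal{P}_\nu\varphi(\tilde{w}_0^{-1}u)\bigr)\overline{\psi}(u)\,du.$$
Substituting the definition of $\mathcal{P}_\nu$ converts this into a double integral in $(u,p) \in U_* \times P$; compactness of $U_*$ and of $\operatorname{supp}(\varphi)$ forces the effective integration region to be relatively compact, and apart from the factor $q^{-\langle\nu,H_P(m_p)\rangle}$ the integrand is locally constant, compactly supported and independent of $\nu$, exactly as in Lemma~\ref{holomorphicl}. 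The double integral therefore collapses to a finite $\mathbb{C}$-linear combination of monomials $q^{-\langle\nu,H_P(m_i)\rangle}$, and hence is a polynomial in $q^\nu$.

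The main obstacle I expect is the bookkeeping for the Whittaker transformation law when passing from the dense piece $I(\nu,\pi)_{d_P}$ to the full induced representation: one must verify that enlarging the projection subgroup $U_1$ to absorb $u_0 U_1 u_0^{-1}$ does not alter the value of $\lambda_\psi(\nu,\pi)$, and that both $\mathcal{P}_{\psi,U_*}$ and $\mathcal{P}_{\psi,U_*}\circ I(\nu,\pi)(u_0)$ land in $I(\nu,\pi)_{d_P}$ after enlargement. Once this is handled by the remark preceding~\eqref{wdef} and admissibility of $I(\nu,\pi)$, the polynomial claim is a direct compactness and local-constancy argument that mirrors the one used in Lemma~\ref{holomorphicl}.
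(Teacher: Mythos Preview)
Your proposal follows essentially the same approach as the paper: assemble Proposition~\ref{welldefined}, Proposition~\ref{wsupport} with its corollary, Lemma~\ref{holomorphicl}, and Proposition~\ref{wproperty}. The polynomial argument is fine and matches what the paper intends (the paper just cites Lemma~\ref{holomorphicl} after reducing to $\mathcal{P}_{\psi,U_1}\varphi_\nu \in I(\nu,\pi)_{d_P}$; your explicit double-integral computation via the corollary to Proposition~\ref{wsupport} is a correct unpacking of this).

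There is, however, an error in your verification of the Whittaker transformation law. The displayed identity
\[
\mathcal{P}_{\psi,U_*}\bigl(I(\nu,\pi)(u_0)f\bigr) \;=\; \psi(u_0)\,I(\nu,\pi)(u_0)\mathcal{P}_{\psi,U_*}f
\]
is not what a change of variables produces, and the condition $U_* \supset U_1 \cup u_0 U_1 u_0^{-1}$ is not the relevant one. What you actually need is $u_0 \in U_*$ (together with $U_*$ large enough that $\mathcal{P}_{\psi,U_*}$ lands in $I(\nu,\pi)_{d_P}$ for both $I(\nu,\pi)^K$ and $I(\nu,\pi)^{u_0Ku_0^{-1}}$). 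Then the substitution $u \mapsto uu_0^{-1}$, using that $\psi$ factors through the abelianization of $U$, gives
\[
\mathcal{P}_{\psi,U_*}\bigl(I(\nu,\pi)(u_0)f\bigr) \;=\; \psi(u_0)\,\mathcal{P}_{\psi,U_*}f,
\]
with no extra $I(\nu,\pi)(u_0)$ on the right. Applying $\lambda_\psi(\nu,\pi)$ then yields the Whittaker law directly. Note that your version, fed into Proposition~\ref{wproperty} as you propose, would output $\psi(u_0)^2$ rather than $\psi(u_0)$. The paper's own proof simply declares this step ``clear'' upon writing $\lambda_\psi(\nu,\pi)f = \lambda_\psi(\nu,\pi)(\mathcal{P}_{\psi,U_*}f)$ and invoking Proposition~\ref{wproperty}; your instinct to spell it out is right, but the bookkeeping needs this correction.
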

\begin{proof}
   $\lambda_\psi(\nu,\pi)$ is now defined for any $f \in I(\nu,\pi)$. It is clear that it is a Whittaker functional by writing $\lambda_\psi(\nu,\pi)f = \lambda_\psi(\nu,\pi) (\mathcal{P}_{\psi,U_*} f)$ and applying Proposition~\ref{wproperty} to $\mathcal{P}_{\psi,U_*} f \in I(\nu,\pi)_{d_P}$.
   
A function $\varphi \in C_c^\infty$ is $K$-invariant for some $K$. The function $\mathcal{P}_\nu \varphi$ lies in $I(\nu,\pi)^K$, for any $\nu \in \mathfrak{a}_{M,\mathbb{C}}^*$. Let $U_1$ be as in Proposition~\ref{wsupport} and write $\varphi_\nu = \mathcal{P}_\nu \varphi$, then
   \begin{equation*}
      \lambda_\psi(\nu,\pi)(\varphi_\nu)
      = \lambda_\psi(\nu,\pi) (\mathcal{P}_{\psi,U_1} \varphi_\nu). 
   \end{equation*}
Lemma~\ref{holomorphicl} gives the required properties of $\lambda_\psi(\nu,\pi)(\varphi_\nu)$.
\end{proof}

\section{$L$-functions and intertwining operators}\label{intertwining}

We begin by recalling Shahidi's tempered $L$-function conjecture. Then, we obtain an irreducibility result for principal series representations (Lemma~\ref{tempunramInd}). We conclude the section with a discussion of intertwining operators and an assumption of Kim in \S\S~\ref{Intoper} and \ref{KimA}.

\subsection{Tempered $L$-functions and irreducibility of principal series} Shahidi's tempered $L$-function conjecture was proved in characteristic zero by H. H. Kim, in most cases \cite{k2005}. A completely local proof, that is also valid in positive characteristic, is given by V. Heiermann and E. Opdam \cite{ho2013}:

\begin{theorem}[Heiermann-Opdam, Kim]\label{temperedL}
Let $(F,\pi,\psi) \in \mathscr{L}_{\rm loc}(p)$ be tempered, then each $L(s,\pi,r_i)$ is holomorphic on $\Re(s) > 0$. 
\end{theorem}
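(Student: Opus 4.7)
The plan is to adopt the Heiermann-Opdam strategy via affine Hecke algebras, which is uniform in residue characteristic; Kim's earlier argument \cite{k2005} handles characteristic zero by a different route through analysis of reducibility of standard modules. First I would reduce from an arbitrary irreducible tempered $\pi$ of $M$ to the case of a discrete series $\sigma$ of a Levi $L \subset M$, using the fact that every tempered representation is a direct summand of such an induced representation together with the inductivity of Langlands-Shahidi $L$-functions (Theorem~6.1 of \cite{lsU}): under restriction of $r_i$ from ${}^L M$ to ${}^L L$, $L(s,\pi,r_i)$ factors into $L$-functions attached to $\sigma$ and to constituents of the restricted representation, so it suffices to prove holomorphy for $\pi = \sigma$ a discrete series.

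Next I would relate $L(s,\sigma,r_i)$ to Harish-Chandra's Plancherel density $\mu(s,\sigma)$ of the induced representation ${\rm I}(s\tilde\alpha,\sigma)$. By Shahidi's product formula, $\mu(s,\sigma)$ equals, up to an $\varepsilon$-factor contribution and elementary terms, a product of the form
\[
   \prod_{i=1}^{m_r} \gamma(is,\sigma,r_i,\psi)\,\gamma(-is,\widetilde{\sigma},r_i,\overline{\psi}).
\]
Writing $\gamma(s,\sigma,r_i,\psi) = \varepsilon(s,\sigma,r_i,\psi)\,L(1-s,\widetilde{\sigma},r_i)/L(s,\sigma,r_i)$, a putative pole of $L(s_0,\sigma,r_i)$ with $\Re(s_0) > 0$ would force a zero of $\mu(s_0,\sigma)$ on the line $s = s_0$.

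The third step is to invoke Opdam's residue calculus for affine Hecke algebras: discrete series correspond to residual points of the Plancherel density on the relevant Bernstein component, and Opdam's classification of residual cosets implies that, for $\sigma$ discrete series of $M$, the density $\mu(s,\sigma)$ has no zero along $\mathbb{C}\tilde\alpha$ in the open half-plane $\Re(s) > 0$. Combined with the previous step, this rules out the pole of $L(s,\sigma,r_i)$. The main obstacle is this third step: in positive characteristic one requires a careful dictionary between the explicit Hecke-algebra Plancherel density (a rational function in $q^s$ determined by Lusztig-type parameters) and the Langlands-Shahidi density. This is precisely where the Kazhdan transfer of \S~\ref{transfer}---together with the compatibility of local $L$-factors, $\gamma$-factors and root numbers under it---is essential, so that Heiermann-Opdam's analysis of residual points can be transported from characteristic zero to characteristic $p$. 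Once this dictionary is set up, the theorem follows from Opdam's classification of residual points.
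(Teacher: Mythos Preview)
The paper does not prove this theorem at all: it is stated as a citation, attributing the characteristic-zero case to Kim \cite{k2005} and noting that Heiermann--Opdam \cite{ho2013} give ``a completely local proof, that is also valid in positive characteristic.'' There is no argument in the paper to compare against.

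Your outline of the Heiermann--Opdam strategy (reduction to discrete series via inductivity, Shahidi's product formula relating $\mu(s,\sigma)$ to the $\gamma$-factors, then Opdam's classification of residual points to exclude zeros of $\mu$ in the relevant half-plane) is broadly accurate as a sketch of \cite{ho2013}. The problematic part is your third step. You assert that in positive characteristic the dictionary between the Hecke-algebra Plancherel density and the Langlands--Shahidi density must be set up via the Kazhdan transfer of \S\ref{transfer}. This is both unnecessary and logically awkward here: the paper explicitly records that the Heiermann--Opdam argument is already valid in positive characteristic, because the Hecke-algebra framework and Opdam's residual-point analysis are insensitive to the residue characteristic once the underlying $\mu$-function is identified (Heiermann's earlier work supplies this identification directly over any non-archimedean local field). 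Invoking \S\ref{transfer} would also invert the paper's logical order, since Theorem~\ref{temperedL} is used in \S\ref{intertwining} (e.g.\ in Lemma~\ref{tempunramInd}) well before the Kazhdan transfer is introduced. If you want to present a proof rather than a citation, drop the transfer step and instead cite the characteristic-free identification of Langlands--Shahidi $\mu$-functions with the Hecke-algebra Plancherel data on which \cite{ho2013} relies.
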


We can combine the theory of local $L$-functions with a result of J.-S. Li \cite{li1992} to obtain the following:

\begin{lemma}\label{tempunramInd}
Let $(F,\pi,\psi) \in \mathscr{L}_{\rm loc}(p)$ be tempered and unramified. Then ${\rm I}(s,\pi)$ is irreducible for $\Re(s) > 1$.
\end{lemma}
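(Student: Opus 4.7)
The approach is to realize $\mathrm{I}(s,\pi)$ as a subquotient of an unramified principal series of $G$ and to apply an irreducibility criterion of J.-S.\ Li \cite{li1992} combined with Theorem~\ref{temperedL}. Since $\pi$ is generic, unramified and tempered, the Casselman--Shalika formula identifies $\pi$ with the unique generic constituent of an unramified principal series $\mathrm{ind}_{B_M}^M(\chi_0)$ of $M$. The Euler product $L(s,\pi,r_i) = \prod_j (1 - \alpha_j q^{-s})^{-1}$ at the unramified place, combined with the holomorphy on $\Re(s) > 0$ provided by Theorem~\ref{temperedL}, forces $|\alpha_j| = 1$; equivalently, the Satake parameter $\chi_0$ is unitary, so that $|\chi_0(\beta^\vee(\varpi))|=1$ for every coroot $\beta^\vee$ of $M$. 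Induction in stages realizes $\mathrm{I}(s,\pi)$ as a subquotient of $\mathrm{ind}_B^G(\chi_s)$ with $\chi_s = \chi_0 \otimes q^{\langle s\tilde{\alpha},H_T(\cdot)\rangle}$, so it suffices to show that $\mathrm{ind}_B^G(\chi_s)$ is irreducible on $\Re(s)>1$.

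Li's theorem \cite{li1992} furnishes exactly this kind of criterion: the unramified principal series $\mathrm{ind}_B^G(\chi_s)$ is irreducible whenever $\chi_s\circ \beta^\vee \neq |\cdot|^{\pm 1}$ for every coroot $\beta^\vee$ of $G$. The verification splits according to whether $\beta$ lies in $M$ or in $N$. For $\beta$ a root of $M$, the vector $\tilde{\alpha}$ defined in \eqref{basis} annihilates $\mathfrak{a}_M$, so $\chi_s$ and $\chi_0$ agree on $\beta^\vee(\varpi)$, and the temperedness bound $|\chi_0(\beta^\vee(\varpi))|=1$ excludes the values $q^{\pm 1}$. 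For $\beta$ a root of $N$, the normalization in \eqref{basis} gives $\langle \tilde{\alpha},\beta^\vee\rangle \geq 1$, whence
\begin{equation*}
   |\chi_s(\beta^\vee(\varpi))| = q^{-\Re(s)\langle\tilde{\alpha},\beta^\vee\rangle} < q^{-1}
\end{equation*}
for $\Re(s)>1$, again evading $q^{\pm 1}$; opposite roots are handled symmetrically. Irreducibility of $\mathrm{ind}_B^G(\chi_s)$ then forces irreducibility of its subquotient $\mathrm{I}(s,\pi)$.

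The main obstacle is the first step, namely justifying that a generic tempered unramified $\pi$ really arises as a constituent of a \emph{unitary} unramified principal series of $M$, and that the bound on its Satake parameters is tight enough to avoid Li's reducibility locus. This is precisely where Theorem~\ref{temperedL} intervenes: combined with the explicit Euler-product expression of $L(s,\pi,r_i)$ available at unramified places, the tempered $L$-function conjecture prevents Satake parameters of $\pi$ from leaving the unit circle, and thereby rules out a ``Levi-internal'' reducibility of $\mathrm{ind}_{B_M}^M(\chi_0)$ from propagating into $\mathrm{I}(s,\pi)$ for $\Re(s)>1$.
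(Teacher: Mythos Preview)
Your approach and the paper's are essentially the same: both embed $\pi$ into an unramified principal series with unitary inducing character and apply Li's irreducibility criterion \cite{li1992} to $\mathrm{ind}_B^G(\chi_s)$. The paper packages the verification through $L$-functions---citing the remark after Proposition~4.1 of \cite{ks2004} to identify Li's $\xi_\alpha(\chi_s)$ with factors of $\prod_i L(1+is,\tilde\pi,r_i)^{-1}$ or $\prod_i L(1-is,\pi,r_i)^{-1}$ and then checking non-vanishing via Theorem~\ref{temperedL} and the explicit unramified formula---whereas you check $\chi_s\circ\beta^\vee\neq|\cdot|^{\pm1}$ directly by separating roots of $M$ from roots of $N$. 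This is the same computation in different clothing.

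Two small points worth tightening. First, the unitarity of the Satake parameter $\chi_0$ is a direct consequence of temperedness (the paper simply asserts it via the Satake classification); your route through Theorem~\ref{temperedL} is both unnecessary and, as written, incomplete: holomorphy of $L(s,\pi,r_i)$ on $\Re(s)>0$ only yields $|\alpha_j|\leq 1$, and you would need to apply the same to $\tilde\pi$ (or invoke the local functional equation) to obtain the reverse inequality. Second, your displayed formula $|\chi_s(\beta^\vee(\varpi))| = q^{-\Re(s)\langle\tilde\alpha,\beta^\vee\rangle}$ for $\beta$ in $N$ silently drops the factor $|\chi_0(\beta^\vee(\varpi))|$; it equals $1$, but this uses unitarity of $\chi_0$ on the \emph{full} torus, not merely on coroots of $M$, so it is worth saying explicitly.
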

\begin{proof}
We have that
\begin{equation*}
   \pi \hookrightarrow {\rm Ind}(\chi),
\end{equation*}
with $\chi$ an unramified unitary character of ${\bf T}(F)$. From the Satake classification, the character $\chi$ corresponds to a complex semisimple conjugacy class in the dual torus, each Satake parameter having absolute value $1$. Let
\begin{equation*}
   \chi_s = \chi \cdot q^{\left\langle {s\tilde{\alpha},H_{\bf P}(\cdot)} \right\rangle}.
\end{equation*}
From the remark following Proposition~4.1 of \cite{ks2004}, the function $\xi_\alpha(\chi_s)$ defined in \S~2 of \cite{li1992} for each non-divisible root $\alpha$ is either a non-zero constant or a factor appearing in
\begin{equation}\label{eitherL}
   \prod_{i=1}^{m_r}L(1+is,\tilde{\pi},r_i)^{-1} \text{ or } \prod_{i=1}^{m_r} L(1 - is,\pi,r_i)^{-1}.
\end{equation}
The result of Li, specifically Theorem~2.2 of [\emph{loc.\,cit.}], states that ${\rm I}(s,\pi)$ is irreducible when each $\xi_\alpha(\chi_s) \neq 0$. The local $L$-functions involved are never zero, and from Theorem~\ref{temperedL} we have that the first product $\prod_{i=1}^{m_r}L(1+is,\tilde{\pi},r_i)^{-1}$ is non-zero for $\Re(s) > 1$. We claim that the same is also true for the second product. For this, notice that because $\pi$ is tempered we can write for each $i$:
\begin{equation*}
   L(s,\pi,r_i)^{-1} = \prod_j (1 - a_{i,j} q_v^{-s}),
\end{equation*}
where the parameters $a_{i,j}$ have absolute value $1$. Then
\begin{equation*}
   \prod_{i=1}^{m_r} L(1 - is,\pi,r_i)^{-1} = \prod_{i=1}^{m_r} \prod_j (1 - a_{i,j} q_v^{-1} q_v^{is}).
\end{equation*}
Each factor in the latter product is non-zero for $\Re(is) > 1$. In particular, the product is non-zero for $\Re(s) > 1$. From Li's theorem, we must have ${\rm I}(s,\pi)$ irreducible for $\Re(s) > 1$.
\end{proof}

\subsection{Intertwining operators}\label{Intoper}

We have the following connection between the intertwining operator and Langlands-Shahidi partial $L$-functions. Let $(k,\pi,\psi,S) \in \mathscr{L}_{\rm glob}(p)$, then
\begin{equation}\label{regintL}
   {\rm M}(s,\pi,\tilde{w}_0) = \prod_{i=1}^{m_r} \dfrac{L^S(is,\pi,r_i)}{L^S(1+is,\pi,r_i)} \bigotimes_{v \in S} {\rm A}(s,\pi_v,\tilde{w}_0).
\end{equation}
The following Lemma is possible by looking into the spectral theory of Eisenstein series available over function fields.

\begin{lemma}\label{globalMESholo}
Let $(k,\pi,\psi,S) \in \mathscr{L}_{\rm glob}(p)$. If $\tilde{w}_0(\pi) \ncong \pi$, then ${\rm M}(s,\pi,\tilde{w}_0)$ and $E(s,\Phi,g,{\bf P})$ are holomorphic for $\Re(s) \geq 0$.
\end{lemma}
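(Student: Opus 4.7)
The plan is first to show that the Eisenstein series $E(s,\Phi,g,{\bf P})$ is holomorphic for $\Re(s) \geq 0$, and then to read off holomorphy of ${\rm M}(s,\pi,\tilde{w}_0)$ from its appearance in the constant term.

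First I would recall the Langlands constant term along the maximal parabolic ${\bf P}$:
\begin{equation*}
   E^{\bf P}(s,\Phi,g) = \Phi_s(g) + {\rm M}(s,\pi,\tilde{w}_0)\Phi_s(g),
\end{equation*}
where the first summand lies in ${\rm I}(s,\pi)$ and the second in ${\rm I}(-s,\tilde{w}_0(\pi))$. Under the hypothesis $\tilde{w}_0(\pi) \ncong \pi$ these two globally induced representations are inequivalent, so the two summands cannot cancel and any polar behavior of one of them persists in the sum.

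Next I would invoke the spectral theory of cuspidal Eisenstein series in the form developed by Harder \cite{harder1974} and extended in \S~IV of \cite{mw1994}. A pole of $E(s,\Phi,g,{\bf P})$ at some $s_0$ with $\Re(s_0) > 0$ would produce a non-zero residue $R$ which is a square-integrable automorphic form on ${\bf G}(k) \backslash {\bf G}(\mathbb{A}_k)$, whose ${\bf P}$-constant term equals ${\rm Res}_{s = s_0}{\rm M}(s,\pi,\tilde{w}_0)\Phi_s$. Langlands' square-integrability criterion applied to the cuspidal exponent of $R$ along ${\bf P}$ then forces the underlying cuspidal datum to be stable (up to an unramified twist) under $\tilde{w}_0$; this is the usual ``residual spectrum requires the Weyl-equivalence class of $\pi$ to be $\tilde{w}_0$-stable'' principle. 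Since $\tilde{w}_0(\pi) \ncong \pi$, no such residue can occur, so $E$ is holomorphic on $\Re(s) > 0$. Holomorphy along the unitary axis $\Re(s) = 0$ follows from unitarity of the standard intertwining operator on that line, together with the rationality in $q^{-s}$ provided by Theorem~\ref{ESrationality}, which precludes an accumulation of poles on a line.

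Finally, I would deduce holomorphy of ${\rm M}(s,\pi,\tilde{w}_0)$: since its image lies in ${\rm I}(-s,\tilde{w}_0(\pi))$ while $\Phi_s$ lies in the non-isomorphic ${\rm I}(s,\pi)$, any polar singularity of ${\rm M}$ at a point with $\Re(s) \geq 0$ cannot be cancelled in the sum $E^{\bf P}$. Such a pole would therefore propagate to a pole of $E^{\bf P}$, and hence of $E$, contradicting the holomorphy just established. The main obstacle I anticipate is the spectral theory step: over number fields the square-integrability of residues and the Langlands exponent analysis are classical, but here one must carefully invoke the function-field adaptation of \S~IV of \cite{mw1994} built on Harder's rationality, to ensure that residues of cuspidal Eisenstein series are genuinely square-integrable and subject to the exponent condition that forces $\tilde{w}_0(\pi) \cong \pi$.
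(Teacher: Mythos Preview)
Your overall strategy is a legitimate alternative to the paper's, but the two arguments are organized quite differently and you misidentify the key principle at one step.

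The paper does not argue via residues of $E$ at all. Instead it writes the $L^2$-inner product of pseudo-Eisenstein series $\theta_\Phi$ using II.2.1~Th\'eor\`eme of \cite{mw1994}, obtaining an integral over $\Re(s)=s_0$ of terms $\langle {\rm M}(s,\pi,\tilde{w})\Phi,\Phi\rangle$ for $\tilde{w}\in\{1,\tilde{w}_0\}$. The hypothesis $\tilde{w}_0(\pi)\ncong\pi$ is then fed directly into V.3.8~Lemme of \cite{mw1994}, which permits shifting the contour of integration to $\Re(s)=0$; this is what yields holomorphy of ${\rm M}(s,\pi,\tilde{w}_0)$ on $\Re(s)>0$. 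Holomorphy on the unitary axis is IV.1.11 of \cite{mw1994}, and holomorphy of $E$ then follows because the poles of $E$ are contained in those of the constant term. So the paper goes ${\rm M}\Rightarrow E$, whereas you go $E\Rightarrow {\rm M}$.

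Your route can be made to work, but the sentence ``Langlands' square-integrability criterion applied to the cuspidal exponent of $R$ along ${\bf P}$ then forces the underlying cuspidal datum to be stable under $\tilde{w}_0$'' is not correct as written. The square-integrability criterion (e.g.\ I.4.11 of \cite{mw1994}) only says that the real parts of the cuspidal exponents lie in the negative cone; for a residue at $s_0$ with $\Re(s_0)>0$ the exponent $-s_0\tilde{\alpha}$ already satisfies this, so no contradiction arises from that criterion alone. What actually forces $\tilde{w}_0(\pi)\cong\pi$ is the \emph{cuspidal support} decomposition of $L^2$: the residue $R$ lies in the summand indexed by the associate class of $({\bf M},\pi)$, while its ${\bf P}$-constant term exhibits $({\bf M},\tilde{w}_0(\pi))$ (up to unramified twist) as part of its cuspidal support, hence these must lie in the same Weyl orbit. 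You should cite the relevant spectral decomposition statements in \cite{mw1994} rather than the exponent criterion, and also address the ``up to unramified twist'' caveat, since the hypothesis is the stronger $\tilde{w}_0(\pi)\ncong\pi$ without twist. The paper's approach sidesteps all of this by invoking V.3.8~Lemme directly.
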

\begin{proof}
Let $\Phi$ be the automorphic functions of \S~1.2 that we obtain from $\pi$. We have the pseudo-Eisenstein series
\begin{equation*}
   \theta_{\Phi} = \int_{\Re(s) = s_0} E(s,\Phi,g,{\bf P}) \, ds.
\end{equation*}
This is first defined for $s_0 > \left\langle \rho_{\bf p}, \alpha^\vee \right\rangle$ by Proposition~II.1.6\,(iii) and (iv) of \cite{mw1994}. Then II.2.1~Th\'eor\`eme of [\emph{loc.\,cit.}] gives for self-associate $\bf P$ that
\begin{equation*}
   \left\langle \theta_{\Phi}, \theta_{\Phi} \right\rangle = \int_{\Re(s) = s_0} \sum_{\tilde{w} \in \left\{ 1, \tilde{w}_0 \right\}} \left\langle {\rm M}(s,\pi,\tilde{w})\Phi_{-\tilde{w}(\bar{s}\tilde{\alpha})}, \Phi_s \right\rangle \, ds,
\end{equation*}
and is zero if $\bf P$ is not self-associate. Now, for $\bf P$ self-associate, the condition $\tilde{w}_0(\pi) \ncong \pi$ allows us to shift the imaginary axis of integration by V.3.8~Lemme of \cite{mw1994} to $\Re(s) =0$. However, by IV.1.11~Proposition we have that $M(s,\pi,\tilde{w}_0)$ is holomorphic for $\Re(s) = 0$. Thus, we have that $M(s,\pi,\tilde{w}_0)$ is holomorphic for $\Re(s) > 0$. Finally, the poles of Eisenstein series are contained by the constant terms.
\end{proof}

\subsection{An assumption of Kim}\label{KimA} Locally, let $(F,\pi,\psi) \in \mathscr{L}_{\rm loc}(p)$. We have the normalized intertwining operator
\begin{equation*}
   {\rm N}(s,\pi,\tilde{w}_0) : {\rm I}(s,\pi) \rightarrow {\rm I}(s',\pi')
\end{equation*}
defined by
\begin{equation*}
   {\rm N}(s,\pi,\tilde{w}_0) = \prod_{i=1}^{m_r} \varepsilon(is,\tilde{\pi},r_i,\psi) \dfrac{L(1+is,\tilde{\pi},r_i)}{L(is,\tilde{\pi},r_i)} {\rm A}(s,\pi,\tilde{w}_0).
\end{equation*}
Globally, for $(k,\pi,\psi,S) \in \mathscr{L}_{\rm glob}(p)$, we have
\begin{equation*}
   {\rm N}(s,\pi,\tilde{w}_0) = \otimes_v \, {\rm N}(s,\pi_v,\tilde{w}_0).
\end{equation*}

The following assumption was made by H.H. Kim in his study of local Langlands-Shahidi $L$-functions and normalized intertwining operators over number fields \cite{k1999}. Having now the Langlands-Shahidi method available in positive characteristic, we are lead at this point to Kim's assumption.

\begin{assumption}\label{kimA}
Let $(k,\pi,\psi,S) \in \mathscr{L}_{\rm glob}(p)$. At every place $v$ of $k$, the normalized intertwining operator ${\rm N}(s,\pi_v,\tilde{w}_0)$ is holomorphic and non-zero for $\Re(s) \geq 1/2$.
\end{assumption}

It is already known to hold in many cases, including all the quasi-split classical groups. See \cite{kk2011} for a more detailed account. We make this assumption for the remainder of the article.

\begin{lemma}\label{prodLholo}
Let $(k,\pi,\psi,S) \in \mathscr{L}_{\rm glob}(p)$. If $\tilde{w}_0(\pi) \ncong \pi$, then
\begin{equation*}
   \prod_{i=1}^{m_r} \dfrac{L(is,\pi,r_i)}{L(1+is,\pi,r_i)}
\end{equation*}
is holomorphic for $\Re(s) \geq 1/2$.
\end{lemma}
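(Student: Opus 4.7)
The strategy is to express $\prod_{i=1}^{m_r} L(is,\pi,r_i)/L(1+is,\pi,r_i)$ as the scalar relating the global intertwining operator ${\rm M}(s,\pi,\tilde{w}_0)$ to the tensor product of the normalized local intertwining operators $\bigotimes_{v \in S} {\rm N}(s,\pi_v,\tilde{w}_0)$, and then to apply Lemma~\ref{globalMESholo} and Assumption~\ref{kimA} to the two sides.

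First I would start from the identity \eqref{regintL},
\[
   {\rm M}(s,\pi,\tilde{w}_0) = \prod_{i=1}^{m_r} \dfrac{L^S(is,\pi,r_i)}{L^S(1+is,\pi,r_i)} \bigotimes_{v \in S} {\rm A}(s,\pi_v,\tilde{w}_0),
\]
and, at each $v \in S$, solve the normalization formula of \S~\ref{KimA} to write ${\rm A}(s,\pi_v,\tilde{w}_0)$ as ${\rm N}(s,\pi_v,\tilde{w}_0)$ multiplied by a local $L$-factor ratio and an inverse $\varepsilon$-factor. Combining the partial $L$-function ratio $L^S/L^S$ with the local ratios coming from the places of $S$ reconstitutes the complete $L$-function ratio, yielding an identity of the shape
\[
   {\rm M}(s,\pi,\tilde{w}_0) = \varepsilon_S(s)\, \prod_{i=1}^{m_r} \dfrac{L(is,\pi,r_i)}{L(1+is,\pi,r_i)} \bigotimes_{v \in S} {\rm N}(s,\pi_v,\tilde{w}_0),
\]
where $\varepsilon_S(s)$ is a finite product of local $\varepsilon$-factors, hence entire and nowhere zero on $\mathbb{C}$.

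Next I would invoke the two main inputs. By Lemma~\ref{globalMESholo}, the assumption $\tilde{w}_0(\pi) \ncong \pi$ gives that ${\rm M}(s,\pi,\tilde{w}_0)$ is holomorphic on $\Re(s) \geq 0$, so in particular on $\Re(s) \geq 1/2$. By Assumption~\ref{kimA}, each ${\rm N}(s,\pi_v,\tilde{w}_0)$ is holomorphic and invertible on $\Re(s) \geq 1/2$, so the same is true of $\bigotimes_{v \in S} {\rm N}(s,\pi_v,\tilde{w}_0)$. Isolating the scalar by evaluating both sides on a test vector on which the normalized operator is invertible (equivalently, moving ${\rm N}$ to the other side as an operator inverse), the scalar product $\prod_{i=1}^{m_r} L(is,\pi,r_i)/L(1+is,\pi,r_i)$ is a ratio of holomorphic objects with invertible denominator on $\Re(s) \geq 1/2$, hence holomorphic there.

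The only bookkeeping subtlety is that the normalization formula as stated involves $\tilde\pi$ rather than $\pi$; since the hypothesis $\tilde{w}_0(\pi) \ncong \pi$, Lemma~\ref{globalMESholo}, and Assumption~\ref{kimA} are all symmetric under $\pi \leftrightarrow \tilde\pi$, one may either run the argument verbatim for $\tilde\pi$ and transfer back, or absorb the discrepancy into the entire factor $\varepsilon_S(s)$ using the local functional equation. I do not foresee a genuine obstacle: the heavy lifting is already done by the spectral-theoretic holomorphy statement in Lemma~\ref{globalMESholo} together with Kim's normalization assumption, and what remains is the algebraic manipulation just described.
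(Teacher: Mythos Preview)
Your proposal is correct and follows essentially the same route as the paper: express ${\rm M}(s,\pi,\tilde{w}_0)$ as a scalar (the completed $L$-ratio times an $\varepsilon$-factor) multiplied by a tensor product of normalized operators, then invoke Lemma~\ref{globalMESholo} for the left-hand side and Assumption~\ref{kimA} for the normalized operators to isolate holomorphy of the scalar. The paper normalizes at \emph{all} places simultaneously (obtaining the complete ratio $\prod_i L(is,\tilde{\pi},r_i)/L(1+is,\tilde{\pi},r_i)$ and the global $\varepsilon$ directly from ${\rm M}=\bigotimes_v {\rm A}_v$), whereas you start from \eqref{regintL} and normalize only at $v\in S$; this is why you run into the $\pi$ versus $\tilde{\pi}$ mismatch, which you correctly flag and which is resolved exactly as you say, by symmetry of the hypothesis under $\pi\leftrightarrow\tilde{\pi}$.
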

\begin{proof}
For every place $v \notin S$, we have that
\begin{equation*}
   {\rm A}(s,\pi_v,\tilde{w}_0)f_{s,v}^0(e_v) = \prod_{i=1}^{m_r} \dfrac{L(is,\pi_v,r_i)}{L(1+is,\pi_v,r_i)} f_{s,v}^0(e_v),
\end{equation*}
for $(k_v,\pi_v,\psi_v) \in \mathscr{L}_{\rm loc}(p)$. Globally, the condition $\tilde{w}_0(\pi) \ncong \pi$ allows us to apply Lemma~\ref{globalMESholo} to $(k,\pi,\psi,S) \in \mathscr{L}_{\rm glob}(p)$. Thus ${\rm M}(s,\pi,\tilde{w}_0)$ is holomorphic for $\Re(s) \geq 0$, and in this region we have
\begin{equation*}
   {\bf M}(s,\pi,\tilde{w}_0) f_s = \prod_{i=1}^{m_r} \varepsilon(is,\tilde{\pi},r_i)^{-1} \dfrac{L(is,\tilde{\pi},r_i)}{L(1+is,\tilde{\pi},r_i)} \otimes_v {\rm N}(s,\pi_v,\tilde{w}_0) f_{s,v}^0,
\end{equation*}
where $f_s = \otimes_v \, f_{s,v}^0 \in {\rm I}(s,\pi)$. Now, with an application of Assumption~\ref{kimA}, we can prove the Lemma.
\end{proof}

\section{Global twists by characters}\label{global:poly}

We now extend a number fields result of Kim-Shahidi to the case of positive characteristic. In particular, Proposition~2.1 of \cite{ks2002} shows that, up to suitable global twists, Langlands-Shahidi $L$-functions become holomorphic. In the case of function fields, Harder's rationality result allows us to prove the stronger property of $L$-functions becoming polynomials after suitable twists.

Let $\xi \in X^*({\bf M})$ be the rational character defined by
\begin{equation*}
   \xi(m) = \det \left( {\rm Ad}(m) \vert \mathfrak{n} \right),
\end{equation*}
where $\mathfrak{n}$ is the Lie algebra of $\bf N$. At every place $v$ of $k$, we obtain a rational character $\xi_v \in X^*({\bf M})_v$. Given a gr\"ossencharakter $\chi = \otimes \chi_v : k^\times \backslash \mathbb{A}_k \rightarrow \mathbb{C}^\times$, we obtain the character
\begin{equation*}
   \chi \cdot \xi = \otimes \, \chi_v \cdot \xi_v
\end{equation*}
of ${\bf M}(\mathbb{A}_k)$. Let $(k,\pi,\psi,S) \in \mathscr{L}_{\rm glob}(p)$. For $n \in \mathbb{Z}_{\geq 0}$, form the automorphic representation
\begin{equation*}
   \pi_{n,\chi} = \pi \otimes (\chi \cdot \xi^n).
\end{equation*}

\begin{proposition}\label{twistLS}
Let $(k,\pi,\psi,S) \in \mathscr{L}_{\rm glob}(p)$ and fix a place $v_0 \in S$. Then there exist non-negative integers $n$ and $f_{v_0}$ such that for every gr\"ossencharakter $\chi = \otimes \chi_v$ with ${\rm cond}(\chi_{v_0}) \geq f_{v_0}$, we have that
\begin{equation*}
   L \left(s,\pi_{n,\chi},r_i \right),
\end{equation*}
$1 \leq i \leq m_r$, is a polynomial function on $\left\{ q^s, q^{-s} \right\}$.
\end{proposition}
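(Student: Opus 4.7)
The plan is to adapt the Kim--Shahidi argument \cite{ks2002} to positive characteristic and then combine the resulting global entireness with Harder's rationality (Theorem~\ref{rationalL}) to upgrade ``holomorphic everywhere'' to ``polynomial''. The key observation is that a rational function in $X = q^{-s}$ with no poles for $X \in \mathbb{C}^{\times}$ is necessarily a Laurent polynomial in $X$, and the substitution $s \mapsto q^{-s}$ precisely realizes $s \in \mathbb{C}$ as $X \in \mathbb{C}^{\times}$. So once $L(s,\pi_{n,\chi},r_i)$ is shown to be entire in $s$, Theorem~\ref{rationalL} will automatically deliver the conclusion.

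First, I would fix $f_{v_0}$ so large that, for every $\chi$ with ${\rm cond}(\chi_{v_0}) \geq f_{v_0}$, the local twist $\pi_{v_0} \otimes (\chi_{v_0} \cdot \xi_{v_0}^n)$ cannot be isomorphic to its $\tilde{w}_0$-conjugate: conductor-exponent invariants at $v_0$ separate the two representations once $\chi_{v_0}$ is ramified beyond the (fixed) data of $\pi_{v_0}$, $\xi_{v_0}$ and $\tilde{w}_0 \cdot \xi_{v_0}$. Non-isomorphism at the single place $v_0$ forces $\tilde{w}_0(\pi_{n,\chi}) \ncong \pi_{n,\chi}$ globally, so Lemma~\ref{prodLholo} yields that
\begin{equation*}
   \prod_{i=1}^{m_r} \frac{L(is,\pi_{n,\chi},r_i)}{L(1+is,\pi_{n,\chi},r_i)}
\end{equation*}
is holomorphic for $\Re(s) \geq 1/2$.

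Next, the character $\xi^n$ has positive weight: its effect on the $i$th piece $r_i$ is an argument-shift $s \mapsto s + n b_i$ with $b_i > 0$, determined by how the center of ${\bf M}$ acts on the corresponding constituent of $\mathfrak{n}$. Taking $n$ large enough, every shifted argument $1+is+nb_i$ lies strictly inside the region of absolute convergence of the Euler product for $L(\,\cdot\,,\pi\otimes\chi,r_i)$ (Theorem~13.2 of \cite{borel}); thus each denominator $L(1+is,\pi_{n,\chi},r_i)$ is holomorphic and non-vanishing on $\Re(s) \geq 1/2$. Hence $\prod_{i} L(is,\pi_{n,\chi},r_i)$ is holomorphic on $\Re(s) \geq 1/2$, and the rank-reduction induction of \S~6 of \cite{lsU}, applied exactly as in the proof of Theorem~\ref{rationalL}, isolates each individual factor, giving holomorphy of each $L(s,\pi_{n,\chi},r_i)$ on $\Re(s) > 0$.

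Finally, I would invoke the global functional equation $L(s,\pi_{n,\chi},r_i) = \varepsilon(s,\pi_{n,\chi},r_i) L(1-s,\tilde{\pi}_{n,\chi},r_i)$. Since ${\rm cond}(\tilde{\chi}_{v_0}) = {\rm cond}(\chi_{v_0})$, the previous two steps apply verbatim to $\tilde{\pi}_{n,\chi}$, yielding holomorphy of $L(s,\tilde{\pi}_{n,\chi},r_i)$ on $\Re(s) > 0$; combined with the entireness and non-vanishing of $\varepsilon$-factors, this gives holomorphy of $L(s,\pi_{n,\chi},r_i)$ on $\Re(s) < 1$, and hence on all of $\mathbb{C}$. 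Theorem~\ref{rationalL} then forces the Laurent polynomial conclusion. The main obstacle I anticipate is the uniform control of $n$ and $f_{v_0}$ through the rank-reduction induction: at each inductive step one must verify that the twisted datum still defines a valid Langlands--Shahidi setup on the smaller group and that the non-self-associativity condition $\tilde{w}_0(\pi) \ncong \pi$ propagates along the induction, exactly in analogy with the book-keeping that appears in the number-field argument of \cite{ks2002}.
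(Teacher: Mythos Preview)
Your overall architecture (force $\tilde{w}_0(\pi_{n,\chi})\ncong\pi_{n,\chi}$ by a conductor condition at $v_0$, deduce holomorphy on $\Re(s)\geq 1/2$, reflect via the functional equation, then invoke Theorem~\ref{rationalL}) matches the paper. But the middle step, where you handle the denominators $L(1+is,\pi_{n,\chi},r_i)$, contains a genuine error. You claim that the twist by $\xi^n$ produces an argument-shift $s\mapsto s+nb_i$ with $b_i>0$, so that choosing $n$ large pushes the denominators into the region of absolute convergence. This is not correct: $\xi^n$ is a rational character ${\bf M}\to\mathbb{G}_m$, and the actual twist is by $\chi\circ\xi^n$, whose absolute value is governed by $\lvert\chi\rvert$, not by $n$. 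For a unitary (e.g.\ finite-order) gr\"ossencharakter $\chi$ --- which the proposition explicitly allows --- the twist is unitary, the region of absolute convergence of $L(s,\pi_{n,\chi},r_i)$ is identical to that of $L(s,\pi,r_i)$, and no choice of $n$ moves the denominators anywhere. So your mechanism for clearing the denominators fails, and with it the isolation of the individual $L(is,\pi_{n,\chi},r_i)$.

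In the paper, $n$ plays a completely different role: it is chosen so that the characters $\xi_i^{n_i}$ on the auxiliary pairs $({\bf G}_i,{\bf M}_i)$ of the rank-reduction induction all restrict to a common $\xi^{n}$, guaranteeing that the non-self-conjugacy condition $\tilde{w}_0(\pi_{n,\chi})\ncong\pi_{n,\chi}$ (which you do anticipate needing to propagate) survives at every inductive step. The denominators are handled not by convergence but by Eisenstein series: once $\tilde{w}_0(\pi_{n,\chi})\ncong\pi_{n,\chi}$, Lemma~\ref{globalMESholo} gives holomorphy of $E(s,\Phi,g,{\bf P})$ on $\Re(s)\geq 0$, and feeding this into the Fourier-coefficient identity~\eqref{eq1rationalL} shows that $\prod_i L^S(1+is,\pi_{n,\chi},r_i)$ is holomorphic and non-zero on $\Re(s)\geq 0$. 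The induction of \S~6 of \cite{lsU} then isolates each $L^S(s,\pi_{n,\chi},r_i)$ as holomorphic and non-zero on $\Re(s)\geq 1$, after which Lemma~\ref{prodLholo} extends holomorphy to $\Re(s)\geq 1/2$ and your functional-equation and rationality steps finish the argument exactly as you describe.
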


\begin{proof}
In order to apply Lemmas~\ref{globalMESholo} and \ref{prodLholo}, we first need the existence of an integer $n$ such that $\tilde{w}_0(\pi_{n,\chi}) \ncong \pi_{n,\chi}$, for suitable $\chi$. For this, we follow Shahidi \cite{shahidi2000} to impose the right condition on the conductor of $\chi_{v_0}$. Write ${\bf P} = {\bf P}_\theta$, with $\theta = \Delta - \left\{ \alpha \right\}$, so that ${\bf M} = {\bf M}_\theta$ and ${\bf N} = {\bf N}_\theta$. Let ${\bf A}_\theta$ be the split torus of ${\bf M}_\theta$ and let $A_\theta = {\bf A}_\theta(k_{v_0})$, the group of $k_{v_0}$-rational points. Let
\begin{equation*}
   A_\theta^1 = \tilde{w}_0(A_\theta) A_\theta^{-1} = \left\{ a \in A_\theta \vert \tilde{w}_0(a) = a^{-1} \right\}.
\end{equation*}
Then Lemma~2 of [\emph{loc.\,cit.}] gives
\begin{equation*}
   \xi \vert_{A_\theta^1} \neq 1.
\end{equation*}
In fact, we have that $\xi: {\bf A}_\theta^1 \rightarrow {\bf G}_m$ is onto. Given an integer $n$, we choose $\ell_{v_0} \in \mathbb{Z}_{\geq 0}$ such that
\begin{equation*}
   1 + \mathfrak{p}_{v_0}^{\ell_{v_0}} \subset \xi^n(A_\theta^1)
\end{equation*}
and $\ell_{v_0}$ is minimal with this property. Let $\omega_{v_0}$ be the central character of $\pi_{v_0}$. Take
\begin{equation}\label{eq1twistLS}
  f_{v_0} \geq \max \left\{ \ell_{v_0}, {\rm cond}(\omega_{v_0}) \right\},
\end{equation}
which depends on $n$. Then
\begin{equation}\label{eq2twistLS}
   \tilde{w}_0(\omega_{v_0} \otimes (\chi_{v_0} \cdot \xi^n)) \ncong \omega_{v_0} \otimes (\chi_{v_0} \cdot \xi^n),
\end{equation}
for ${\rm cond}(\chi_{v_0}) \geq f_{v_0}$.

Let $({\bf G}_i,{\bf M}_i)$ be as in Proposition~6.4 of \cite{lsU}, where ${\bf G}_i \hookrightarrow {\bf G}$ and we have the corresponding parabolic ${\bf P}_i = {\bf M}_i {\bf N}_i$ of ${\bf G}_i$. Let $\xi_i \in X^*({\bf M}_i)$ be the rational character
\begin{equation*}
   \xi_i(m) = \det \left( {\rm Ad}(m) \vert \mathfrak{n}_i \right),
\end{equation*}
where $\mathfrak{n}_i$ is the Lie algebra of ${\bf N}_i$. There are then integers $n_1, \ldots, n_{m_r}$, such that upon restriction to ${\bf M}_i$ we have $\xi_i^{n_i} = \xi^{n_1}$ and $\chi \cdot \xi_i^{n_i} = \chi \cdot \xi^{n_1}$. With this choice of $n = n_1$, we choose $f_{v_0}$ as in \eqref{twistLS} at the place $v_0 \in S$. Then, for any gr\"ossencharakter $\chi : k^\times \backslash \mathbb{A}_k^\times \rightarrow \mathbb{C}^\times$ with ${\rm cond}(\chi_{v_0}) \geq f_{v_0}$. Equation \eqref{eq2twistLS} at $v_0$ ensures that
\begin{equation*}
   \tilde{w}_0(\pi_{n,\chi}) \ncong \pi_{n,\chi}.
\end{equation*}

Now, Lemma~\ref{globalMESholo} together with \eqref{eq1rationalL} give that
\begin{equation*}
   \prod_{i=1}^{m_r} L^S(1+is,\pi_{n,\chi},r_i)
\end{equation*}
is holomorphic and non-zero for $\Re(s) \geq 0$. The induction step found in \S~6.2 of \cite{lsU} allows us to isolate each $L$-function and conclude that each
\begin{equation*}
   L^S(s,\pi_{n,\chi},r_i)
\end{equation*}
is holomorphic and non-zero on $\Re(s) \geq 1$. Furthermore, with the aid of Lemma~\ref{prodLholo} we conclude that each $L^S(s,\pi_{n,\chi},r_i)$ is holomorphic on $\Re(s) \geq 1/2$.

The functional equation of Theorem~11.1 of \cite{lsU}, allows us to conclude holomorphy for $\Re(s) \leq 1/2$. The automorphic $L$-functions $L(s,\pi_{n,\chi},r_i)$ being now entire, in addition to being rational by Theorem~\ref{rationalL}, must be polynomials on $\left\{ q^{-s}, q^s \right\}$.
\end{proof}

The following corollary is obtained from the proof of the Proposition.

\begin{corollary}\label{Lpoly}
Let $(k,\pi,\psi,S) \in \mathscr{L}_{\rm glob}(p)$ be such that $\tilde{w}_0(\pi) \ncong \pi$. Then $L(s,\pi,r_i)$, $1 \leq i \leq m_r$, is a polynomial on $\left\{ q^{-s}, q^s \right\}$.
\end{corollary}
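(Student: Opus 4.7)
The plan is to follow the proof of Proposition~\ref{twistLS} verbatim, simply deleting the preliminary twist construction whose sole purpose was to enforce $\tilde{w}_0(\pi_{n,\chi}) \ncong \pi_{n,\chi}$; here that condition is given by hypothesis, so Lemmas~\ref{globalMESholo} and~\ref{prodLholo} apply directly to $\pi$ itself.

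First I would combine Lemma~\ref{globalMESholo} with the Whittaker factorization~\eqref{eq1rationalL} of the Fourier coefficient $E_\psi(s,\Phi,g,{\bf P})$, using Theorem~\ref{precise:Wthm} to observe that the local factors $\lambda_{\psi_v}(s,\pi_v)({\rm I}(s,\pi_v)(g_v)f_{s,v})$ for $v \in S$ are polynomial in $q_v^{\pm s}$. This yields, exactly as in the proof of Proposition~\ref{twistLS}, that
\begin{equation*}
   \prod_{i=1}^{m_r} L^S(1+is,\pi,r_i)
\end{equation*}
is holomorphic and non-vanishing on $\Re(s) \geq 0$.

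Second, the induction procedure of \S~6.2 of \cite{lsU} isolates each irreducible constituent $r_i$, so each partial $L$-function $L^S(s,\pi,r_i)$ is holomorphic and non-vanishing on $\Re(s) \geq 1$. Applying Lemma~\ref{prodLholo} (which rests on Assumption~\ref{kimA}, and whose non-isomorphism hypothesis is exactly our standing assumption $\tilde{w}_0(\pi) \ncong \pi$) then widens the region of holomorphy of each $L^S(s,\pi,r_i)$ to $\Re(s) \geq 1/2$. The global functional equation of Theorem~11.1 of \cite{lsU} propagates holomorphy across the critical line to $\Re(s) \leq 1/2$, so each $L(s,\pi,r_i)$ is entire.

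Finally, by Theorem~\ref{rationalL} the function $L(s,\pi,r_i)$ is rational in $q^{-s}$; an entire rational function in $q^{-s}$ is necessarily a polynomial in $\{q^{-s},q^s\}$, which is the desired conclusion. The most delicate step is the appeal to Lemma~\ref{prodLholo}, since it is the one place where Kim's Assumption~\ref{kimA} enters; once that is granted, the remaining steps are routine and parallel the second half of the proof of Proposition~\ref{twistLS}.
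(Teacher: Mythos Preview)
Your proposal is correct and is exactly what the paper intends: it states explicitly that the corollary ``is obtained from the proof of the Proposition,'' and you have correctly identified that the only role of the twist construction in Proposition~\ref{twistLS} is to force $\tilde{w}_0(\pi_{n,\chi}) \ncong \pi_{n,\chi}$, after which the remaining steps (Lemmas~\ref{globalMESholo} and~\ref{prodLholo}, the induction from \S~6.2 of \cite{lsU}, the functional equation, and the rationality from Theorem~\ref{rationalL}) go through unchanged for $\pi$ itself.
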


\section{Transfer and the generic unitary dual of classical groups}\label{transfer}

We recall the results of Ganapathy on the Kazhdan transfer between close local fields and the Langlands-Shahidi local coefficient \cite{Ga2015}. We use this to transfer a result of Lapid, Mui\'c and Tadi\'c on unitary generic representations of the classical groups \cite{LaMuTa2004}. In particular, for the split classical groups, Ganapathy and Varma show that local L-functions and $\gamma$-factors are compatible with the Kazhdan transfer \cite{GaVaJIMJ}; we observe that this holds in more generality (Theorem~\ref{transfer:L3}). An alternative approach to the results of this section would be to use the techniques of \cite{AuBaPlSopreprint, BaHeLeSe2010}.

\subsection{Kazhdan transfer} Let $F$ and $F'$ be non-archimedean local fields that are $m$-close in the sense of Kazhdan \cite{Ka1986}, i.e, there is an isomorphism of rings $\mathcal{O}_F / \mathfrak{p}_F^m \cong \mathcal{O}_{F'} / \mathfrak{p}_{F'}^m$. Let ${\bf G}$ be a split connected reductive group scheme defined over $\mathbb{Z}$. Let $\mathcal{I}$ denote the standard Iwahori subgroup scheme of ${\bf G}$ and let
\begin{equation*}
   \mathcal{I}_m = {\rm Ker}\left\{ {\mathcal{I}}(\mathcal{O}_F) \rightarrow {\mathcal{I}}(\mathcal{O}_F/\mathfrak{p}_F^m) \right\}.
\end{equation*}
We use the results of Ganapathy, who establishes a refined Kazhdan tranfer for the Hecke algebra $\mathscr{H}(G,\mathcal{I}_m)$ in \cite{Ga2015}.

Let $\mathfrak{R}(G)$ be the category of smooth complex representations of $G$. Let $\mathfrak{R}^m(G)$ be the subcategory of representations $(\pi,V)$ of $G$ such that $\pi(G)(V^{\mathcal{I}_m}) = V$. If $\mathscr{H}(G,\mathcal{I}_m)$-mod is the category of $\mathscr{H}(G,\mathcal{I}_m)$-modules, Proposition~3.16 of \cite{Ga2015} shows that $\mathfrak{R}^m(G)$ is closed under sub-quotients and gives a functor
\begin{equation*}
   \mathfrak{R}^m(G) \rightarrow \mathscr{H}(G,\mathcal{I}_m)\text{-mod},
\end{equation*}
which is an equivalence of categories.

Ganapathy constructs a Kazhdan isomorphism
\begin{equation*}
   \mathscr{H}(G,\mathcal{I}_m) \longrightarrow \mathscr{H}(G',\mathcal{I}_m')
\end{equation*}
which also gives a bijection
\begin{equation*}
   \left\{ \begin{array}{c} \text{Isomorphism classes of} \\
  				     \text{irreducible admissible } \pi \\
  				     \text{of } G \text{ with } \pi^{\mathcal{I}_m}\neq 0
					\end{array} \right\}
   \xrightarrow{\zeta_m}
   \left\{ \begin{array}{c} \text{Isomorphism classes of} \\
  				     \text{irreducible admissible } \pi' \\
  				     \text{of } G' \text{ with } \pi^{\mathcal{I}_m'}\neq 0
					\end{array} \right\}.
\end{equation*}
We use the notation of \cite{GaVaJIMJ}, to denote $\pi \sim_m \pi'$ whenever we are in the above situation. A property that is preserved under the transfer is the Moy-Prasad depth \cite{MoPr1994}, ${\rm depth}(\pi) = {\rm depth}(\pi')$. We refer to \cite{Ga2015} for more details, and also \S~13 of \cite{GaVaJIMJ} for a summary of results.

\begin{remark}
Kazhdan \cite{Ka1986}, produces an isomorphism of Hecke algebras using principal congruence subgroups ${\bf\mathcal{K}}_m$ of $G$, for $m$ large enough, instead of the Iwahori filtration. Since $\mathcal{I}_m \subset\mathcal{K}_m$, we have that $\mathscr{H}(G,\mathcal{K}_m) \subset \mathscr{H}(G,\mathcal{I}_m)$. Corollary~3.15 of \cite{Ga2015}, proves that $\zeta_m \vert_{\mathscr{H}(G,\mathcal{K}_m)}$ is the Kazhdan isomorphism between $\mathscr{H}(G,\mathcal{K}_m)$ and $\mathscr{H}(G,\mathcal{K}_m')$ for appropriate $m$.
\end{remark}

\subsection{Parabolic induction and transfer} Fix a pair $({\bf G},{\bf M})$ of split reductive group schemes such that ${\bf M}$ is a Levi component of a parabolic subgroup $\bf P$ of $\bf G$. We have the transfer of the previous section for $\bf M$, so that given $m$-close local fields $F$ and $F'$, the map $\zeta_m$ gives a transfer $\sigma \sim_m \sigma'$ between admissible representations of $M$ and $M'$. The results of \S~4 and \S~5 of \cite{Ga2015} give a map $\kappa_m$ for the corresponding parabolically induced representations. The Kazhdan transfer is compatible with parabolic induction. For this, suppose that we have $\pi^{\mathcal{I}_m} \neq 0$, then

\begin{center}
\begin{tikzpicture}
   \draw (0,2) node {${\rm Ind}(\pi)$ of $G$};
   \draw[->] (0,.5) -- (0,1.5);
   \draw (0,1) node [left] {${\rm I}_P^G$};
   \draw (0,0) node {$\pi$ of $M$};
   \draw[<->] (1.5,2) -- (3.25,2);
   \draw (2.35,2) node[above] {$\kappa_,$};
   \draw (5,0) node {$\pi'$ of $M'$};
   \draw (5,2) node {${\rm Ind}(\pi')$ of $G'$};
   \draw[<->] (1.5,0) -- (3.25,0);
   \draw (5,1) node [right] {${\rm I}_{P'}^{G'}$};
   \draw (2.35,0) node[above] {$\zeta_{m+3}$};
   \draw[<-] (5,1.5) -- (5,.5);
\end{tikzpicture}
\end{center}

Suppose we are given two characters $\psi : F \rightarrow \mathbb{C}^\times$ and $\psi' : F' \rightarrow \mathbb{C}^\times$. We write $\psi \sim_m \psi'$ if ${\rm cond}(\psi) = {\rm cond}(\psi') = t$ and
\begin{equation*}
   \psi \vert_{\mathfrak{p}_F^{t-m}/\mathfrak{p}_F^t} \cong 
   \psi' \vert_{\mathfrak{p}_{F'}^{t-m}/\mathfrak{p}_{F'}^t}.
\end{equation*}

An important result is the following: Let $\pi$ be a generic representation of $G_n$ and $\tau$ a smooth irreducible representation of ${\rm GL}_r(F)$. Let $m \geq 1$ be such that ${\rm depth}(\pi)$, ${\rm depth}(\tau) < m$. There exists an integer $l$, depending on $n,r,m$, such that for each $F'$ that is $l$-close to $F$, we have
\begin{equation}
   \gamma(s,\pi \times \tau,\psi) = \gamma(s,\pi' \times \tau',\psi'),
\end{equation}
if $\pi \sim_l \pi'$, $\tau \sim_l \tau'$ and $\psi \sim_l \psi'$. This is Proposition~13.5.3 of \cite{GaVaJIMJ} for the split classical groups. For a general split reductive group scheme, we observe that we have:

\begin{theorem}\label{transfer:L3}
Let $(F',\pi',\psi') \in \mathscr{L}_{\rm loc}(p)$ be a generic representation of $M' = {\bf M}(F')$. Let $m \geq 1$ be such that ${\rm depth}(\pi')<m$. There exists an integer $l$, depending on $m$, such that for every non-archimedean local field $F$ that is $l$-close to $F'$ we have
\begin{align*}
   L(s,\pi,r_i) &= L(s,\pi',r_i) \\
   \varepsilon(s,\pi,r_i,\psi) &= \varepsilon(s,\pi',r_i,\psi'),
\end{align*}
if $\pi \sim_l \pi'$ and $\psi \sim_l \psi'$, $1 \leq i \leq m_r$.
\end{theorem}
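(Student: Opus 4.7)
The plan is to deduce the theorem from the corresponding transfer statement for $\gamma$-factors, together with the Langlands--Shahidi axiomatic characterization of $L$- and $\varepsilon$-factors in terms of $\gamma$-factors. I would proceed in the order (i) $\gamma$-factors, (ii) tempered $L$-factors, (iii) general $L$-factors via the Langlands classification, (iv) $\varepsilon$-factors. For (i), the Kazhdan transfer preserves the Langlands--Shahidi local coefficient $C_\psi(s,\pi,\tilde{w}_0)$ by Ganapathy's work \cite{Ga2015} (as also summarized in \S~13 of \cite{GaVaJIMJ}). Combining this with the multiplicativity of $\gamma$-factors---one of the characterizing axioms of Theorem~6.1 of \cite{lsU}---reduces the equality $\gamma(s,\pi,r_i,\psi) = \gamma(s,\pi',r_i,\psi')$ to the supercuspidal generic case, where the $\gamma$-factor coincides with the local coefficient up to an explicit Plancherel-measure term depending only on the residue cardinality $q$ and the depth, both preserved under the transfer.

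For (ii), by the tempered $L$-function conjecture (Theorem~\ref{temperedL}), in the tempered generic case $L(s,\pi,r_i)^{-1}$ is a polynomial in $q^{-s}$ characterized as the monic factor of minimal degree that clears the poles of $\gamma(s,\pi,r_i,\psi)$ in $\Re(s) > 0$; since $F$ and $F'$ share the same residue field and hence the same $q$, the matching of $\gamma$-factors forces $L(s,\pi,r_i) = L(s,\pi',r_i)$. For (iii), the Langlands classification realizes a generic $\pi$ as a Langlands quotient of a standard module parabolically induced from tempered data $(\sigma,\nu)$ on a smaller Levi, and the multiplicativity of $L$-factors (also part of Theorem~6.1 of \cite{lsU}) expresses $L(s,\pi,r_i)$ as a product of twisted tempered $L$-factors associated to the constituents of $\sigma$. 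Since the Kazhdan transfer is compatible with parabolic induction (via the diagram in \S~\ref{transfer}) and with Moy--Prasad depth, the Langlands data of $\pi'$ arise as the transfer of those of $\pi$ by uniqueness of the Langlands decomposition, and (ii) then closes the argument.

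Stage (iv) is immediate from the definitional identity
\begin{equation*}
   \varepsilon(s,\pi,r_i,\psi) = \gamma(s,\pi,r_i,\psi) \, \frac{L(s,\pi,r_i)}{L(1-s,\tilde{\pi},r_i)},
\end{equation*}
together with $\tilde\pi \sim_l \tilde\pi'$ (contragredient being compatible with the transfer). The main obstacle I anticipate is in stage (iii): one must track carefully that if $\pi \sim_l \pi'$ and $\pi$ has Langlands decomposition $J({\bf P}_0,\sigma,\nu)$, then $\pi' = J({\bf P}_0,\sigma',\nu)$ with $\sigma \sim_{l'} \sigma'$ for some $l'$ controlled by $l$, and that $l$ may be chosen to depend only on the depth bound $m$. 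This is plausible from depth preservation and the compatibility of the transfer with parabolic induction, but the precise bookkeeping of the Langlands data and the choice of $l$ in terms of $m$ are the delicate points.
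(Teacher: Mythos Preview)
Your overall architecture---deduce the matching of $\gamma$-factors from Ganapathy's transfer of the local coefficient, then pass to $L$ and $\varepsilon$ via the tempered case and the Langlands classification---is exactly the paper's strategy, and your stages (ii)--(iv) unpack in more detail what the paper compresses into the sentence ``the construction of $L$-functions and $\varepsilon$-factors \ldots\ builds the general case from tempered representations aided by Langlands classification.''

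The one place where your route diverges is stage~(i). You propose to use the multiplicativity axiom to reduce the $\gamma$-factor equality to the supercuspidal generic case, and then invoke an ``explicit Plancherel-measure term'' there. The paper does something more direct: it proves $\gamma(s,\pi,r_i,\psi)=\gamma(s,\pi',r_i,\psi')$ for \emph{every} generic $\pi$ (not just supercuspidal ones) by the Shahidi induction on $m_r$, the number of irreducible constituents of the adjoint action. Concretely, Lemma~4.4 of \cite{lsU} produces, for each $i$, an auxiliary pair $({\bf G}_{i_k},{\bf M}_{i_k})$ with $r_i=r_1^{(k)}$ and $r^{(k)}$ irreducible, so that $\gamma(s,\pi,r_i,\psi)$ is literally equal to a single local coefficient $C_\psi(s,\pi_{(k)},\tilde w_0)$; Ganapathy's Theorem~5.5 then transfers this coefficient directly. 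This bypasses any reduction to supercuspidals and any appeal to Plancherel-type normalizations---the ``Plancherel-measure term'' you mention is not needed and is not how the supercuspidal $\gamma$-factor is actually defined in the LS method (it is still defined by the same $m_r$-induction). Your detour is not incorrect in spirit, but it adds bookkeeping (matching supercuspidal supports under $\zeta_l$) that the paper's argument simply avoids.
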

\begin{proof}
First assume that
\begin{equation}\label{transfer:gamma}
   \gamma(s,\pi,r_i,\psi) = \gamma(s,\pi',r_i,\psi')
\end{equation}
is true when $\pi \sim_l \pi'$ are tempered representations. Then, the construction of $L$-functions and $\varepsilon$-factors of \cite{shahidi1990} and \cite{lsU}, builds the general case from tempered representations aided by Langlands classification. The theorem then follows, since the Kazdan transfer preserves temperedness, is compatible with parabolic induction and twists by unramified characters (see \S~4 of \cite{Ga2015}).

We actually prove \eqref{transfer:gamma} for every triple $(F',\pi',\psi') \in \mathscr{L}_{\rm loc}(p)$. This is done by an induction argument on the number of irreducible constituents of the adjoint action of ${}^LM$ on ${}^L\mathfrak{n}$, namely $r = \oplus_{i=1}^{m_r} r_i$. If $r = r_1$ is irreducible, we have that
\begin{equation*}
   \gamma(s,\pi',r_1,\psi') = C_{\psi'}(s,\pi',\tilde{w}_0').
\end{equation*}
If $m_r > 1$, we can use Lemma~4.4 of \cite{lsU}. Then for each $i$, $1 \leq i \leq m_r$, there exists a pair $({\bf G_i},{\bf M_i})$, such that the adjoint action $r'$ of ${}^LM_i$ on ${}^L\mathfrak{n}_i$ has $r' = \oplus_{j=1}^{m_{r'}} r_j'$, with $m_{r'} < m_r$ and $r_i = r_1'$. We continue in this way until we find a pair $({\bf G}_{i_k},{\bf M}_{i_k})$, such that $r_i = r_{1}^{(k)}$ and the adjoint action after $k$ steps $r^{(k)} = r_1^{(k)}$ is irreducible. From the way the construction is made, there is a triple $(F',\pi_{(k)}',\psi') \in \mathscr{L}({\bf G}_{i_k},{\bf M}_{i_k},p)$, for which we have that
\begin{equation*}
   \gamma(s,\pi',r_i,\psi') = C_{\psi'}(s,\pi_{(k)}',\tilde{w}_0').
\end{equation*}
Finally, thanks to Theorem~5.5 of \cite{Ga2015}, we have that there is a sufficiently large $l$, so that for every non-archimedean local field $F$, if we let $\pi \sim_l \pi'$ and $\psi \sim_l \psi'$, then
\begin{equation*}
   C_\psi(s,\pi_{(k)},\tilde{w}_0) = C_{\psi'}(s,\pi_{(k)}',\tilde{w}_0').
\end{equation*}
From this, equation~\eqref{transfer:gamma} follows.
\end{proof}

\subsection{Generic unitary dual}\label{generic:unitary} Assume $F$ has characteristic zero. Let $\pi$ be a (unitary) generic representation of $G_n = {\bf G}_n(F)$. From \cite{LaMuTa2004} we see that $\pi$ is a constituent of an induced representation of the form
\begin{equation}\label{LaMuTa:bound}
   {\rm Ind}(\delta_{1} \nu^{\beta_1} \otimes \cdots \otimes \delta_{d} \nu^{\beta} \otimes
   \rho_{1} \nu^{\alpha_1} \otimes \cdots \otimes \rho_{e}' \nu^{\alpha_e} \otimes \pi_{0}).
\end{equation}
The representations $\delta_{i}$ and $\rho_{j}$ are unitary discrete series; the Langlands parameters are of the form $1 > \beta_1 > \cdots \beta_d > 1/2 > \alpha_1 > \cdots > \alpha_e > 0$; and, $\pi_{0}$ is tempered generic of a classical group $G_{n_0}$ of the same kind as $G_n$. In fact, except for the case of ${\bf G}_n = {\rm SO}_{2n}$, the set $\left\{ \beta_1, \ldots, \beta_d \right\}$ is empty and the Langlands parameters are bounded by $1/2$.

\begin{theorem}\label{uclassical:p:bound}
Let $F'$ be a non-Archimedean local field of characteristic $p$ and let $\pi'$ be a (unitary) generic representation of $G'_n = {\bf G}_n(F')$. Then $\pi' \sim_l \pi$ is a constituent of an induced representation of the form
\begin{equation}
   {\rm Ind}(\delta'_{1} \nu^{\beta_1} \otimes \cdots \otimes \delta'_{d} \nu^{\beta} \otimes
   \rho'_{1} \nu^{\alpha_1} \otimes \cdots \otimes \rho'_{e} \nu^{\alpha_e} \otimes \pi_{0}'),
\end{equation}
with the notation of \eqref{LaMuTa:bound}.
\end{theorem}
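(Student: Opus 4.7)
The plan is to lift $\pi'$ to a unitary generic representation $\pi$ in characteristic zero via Kazhdan's isomorphism between close local fields, apply the classification of \cite{LaMuTa2004} to decompose $\pi$ in the LMT form, and then transfer the discrete series and tempered building blocks back to $F'$. This works because the Kazhdan transfer preserves Moy-Prasad depth, intertwines parabolic induction and subquotients (\S~4--5 of \cite{Ga2015}), and, by Theorem~\ref{transfer:L3}, respects Langlands-Shahidi $L$- and $\gamma$-factors; together these are exactly the structural features LMT rests on.

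\textbf{Execution.} Fix $m>{\rm depth}(\pi')$ so that $(\pi')^{\mathcal{I}_m'}\neq 0$, and pick $l\geq m$ large enough so that Theorem~\ref{transfer:L3}, together with the parabolic-induction compatibility of $\zeta_\cdot$, applies uniformly to $\pi'$ and to any essentially discrete series or tempered constituent appearing in its Jacquet modules. By the standard existence of $l$-close characteristic zero local fields, choose $F$ with $F\sim_l F'$, let $\pi=\zeta_l(\pi')$ on $G_n={\bf G}_n(F)$. Genericity is detected by non-vanishing of the Whittaker functional, which is controlled by the local coefficient; since this transfers (Theorem~5.5 of \cite{Ga2015} and Theorem~\ref{transfer:L3}), $\pi$ is generic. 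Granting moreover that $\pi$ is unitary (see the obstacle below), \cite{LaMuTa2004} realizes it as a constituent of
\begin{equation*}
   {\rm Ind}(\delta_{1} \nu^{\beta_1} \otimes \cdots \otimes \delta_{d} \nu^{\beta_d} \otimes
   \rho_{1} \nu^{\alpha_1} \otimes \cdots \otimes \rho_{e} \nu^{\alpha_e} \otimes \pi_{0})
\end{equation*}
with exponents as in \eqref{LaMuTa:bound}, with $\delta_i,\rho_j$ unitary discrete series on general linear factors and $\pi_0$ tempered generic on a smaller ${\bf G}_{n_0}$. Each of these components has depth bounded by ${\rm depth}(\pi)={\rm depth}(\pi')$, so after possibly enlarging $l$, transfer them back via $\zeta_l$ to obtain representations $\delta_i',\rho_j',\pi_0'$ over $F'$. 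Preservation of essential square-integrability, temperedness, and unramified twisting under the transfer gives these the required properties, and compatibility with parabolic induction then identifies $\pi'$ as the corresponding constituent of the induced representation featured in the statement.

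\textbf{Main obstacle.} The delicate point is showing that the unitarity of $\pi'$ passes through the transfer to $\pi$. The cleanest route is to verify that the Kazhdan isomorphism $\mathscr{H}(G',\mathcal{I}_l')\cong\mathscr{H}(G,\mathcal{I}_l)$ intertwines the natural $*$-structures coming from complex conjugation and the Chevalley involution, so that the Hermitian contragredient commutes with $\zeta_l$ and the self-duality $\overline{\widetilde{\pi'}}\cong\pi'$ transports to $\overline{\widetilde{\pi}}\cong\pi$. If this compatibility proves awkward to formalize, an alternative is to bypass unitarity of $\pi$ entirely: the LMT bounds on the Langlands parameters can be characterized in terms of the holomorphy of Langlands-Shahidi normalized intertwining operators at the Langlands data (the tempered $L$-function conjecture, Theorem~\ref{temperedL}, playing a central role), and these holomorphy statements transfer directly by Theorem~\ref{transfer:L3}. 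Thus the exponent bounds forced on $\pi'$ by its unitarity can be read off from the analogous bounds for $\pi$ without needing unitarity of $\pi$ as a separate hypothesis.
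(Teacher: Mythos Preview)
Your overall strategy matches the paper's exactly: transfer $\pi'$ to characteristic zero via the Kazhdan--Ganapathy isomorphism, apply the Lapid--Mui\'c--Tadi\'c classification there, and transfer the building blocks back using compatibility of $\zeta_\cdot$ with parabolic induction, discrete series, temperedness, and unramified twists. The paper also cites \S~4.1 of \cite{Ga2015} directly for preservation of genericity rather than routing through the local coefficient as you do.

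The one substantive difference is the unitarity step, which you correctly flag as the obstacle but do not resolve. The paper's argument is more elementary than either of your proposed routes. It simply \emph{transports the invariant Hermitian form by hand}: given the positive definite form $h'$ on $V'$, restrict to each $V'^{\mathcal{I}_r'}$ to get $h_r'$, push this across the Hecke algebra isomorphism $\zeta_r$ to a positive definite form $h_r$ on $V^{\mathcal{I}_r}$, check that the forms $h_r$ are compatible as $r$ grows (by compatibility of $\zeta_r$ with $\zeta_t$ for $t>r$), and assemble them into a positive definite $G$-invariant form on $V=\bigcup_r V^{\mathcal{I}_r}$. No abstract $*$-structure compatibility is invoked, and no detour through $L$-function holomorphy is needed. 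Your route~(a) is essentially a repackaging of this, but stated at a level of abstraction that hides how direct the argument is. Your route~(b) is an interesting alternative in principle, but making it precise would require a characterization of the LMT exponent bounds purely in terms of transferable local factors, and that is not something immediately available in the literature; the paper does not pursue it.
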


\begin{proof} Let us show that unitarity is preserved under the Kazhdan transfer. Let $F$ be or characteristic $0$ and such that it is $l$-close to $F'$. Take $l$ sufficiently large, and let $\pi$ be a representation of $G$, such that $\pi' \sim_l \pi$ and ${\rm depth}(\pi') = {\rm depth}(\pi) < m$. This is possible by the results of \S~3 of [\emph{loc.\,cit.}], where $l$ depends on $m$.

Since $(\pi,V')'$ is unitary, there is a positive definite Hermitian form $h'$ on $V'$. For every positive integer $r > l$, we obtain a positive definite Hermitian form $h'_r$ on $V'^{\mathcal{I}_r}$, defined by $h'_r(v',w') = h'(v',w')$ for $v',w' \in V'^{\mathcal{I}_r}$. We can transport the structure, using the Kazhdan isomorphism $\zeta_r$ to $V^{\mathcal{I}_r}$. We thus obtain a positive definite hermitian form $h_r$ on $V^{\mathcal{I}_r}$. Let $t>r>l$, then at every stage we have compatibility
\begin{center}
\begin{tikzpicture}
   \draw (0,0) node {$V^{\mathcal{I}_r}$};
   \draw (0,.5) node {$\cup$};
   \draw (0,1) node {$V^{\mathcal{I}_t}$};
   \draw[<->] (.5,1) -- (2.5,1);
   \draw (1.5,1) node[above] {$\zeta_t$};
   \draw (3,1) node {$V'^{\mathcal{I}_t'}$};
   \draw (3,0) node {$V'^{\mathcal{I}_r'}$};
   \draw[<->] (.5,0) -- (2.5,0);
   \draw (1.5,0) node[below] {$\zeta_r$};
   \draw (3,.5) node {$\cup$};
\end{tikzpicture}
\end{center}
for the Hermitian forms. We can then define a hermitian form $h$ on $V$. Let $v,w \in V \setminus \left\{ 0 \right\}$, then there exists an $r > l$ such that $v,w \in V^{\mathcal{I}_r}$. Then $h(v,w) = h_r(v,w) >0$. Hence the form is positive definite and $\pi$ is unitary.

Genericity is respected by the Kazhdan transfer, this is the content of \S~4.1 of \cite{Ga2015}. The representation $\pi$ of $G$ is then unitary generic. Since we are now in characteristic $0$, we have the Lapid-Mui\'c-Tadic result, hence $\pi$ is a constituent of an induced representation of the form given by equation~\eqref{LaMuTa:bound}. With the notation for this setting for $\pi$, we can take $\pi_0' \sim_l \pi_0$, $\delta_i' \sim_l \delta_i$, $\rho_j' \sim_l \rho_j$, for $1< i < d$, $1 < j < e$, all with depth less than $m$ and $l$ large enough.

The Kazhdan transfer behaves well under parabolic induction, see \S~4 of \cite{Ga2015}. It preserves discrete series representations, tempered representations, and we can twist by unramified characters. Thus the Lapid-Mui\'c-Tadi\'c presentation for $\pi$ transfers to $\pi'$, and we obtain the desired form.
\end{proof}

\section{On the holomorphy of $L$-functions for the classical groups}\label{classical:holo}

Let ${\bf G}_n$ be either ${\rm SO}_{2n+1}$, ${\rm Sp}_{2n}$, ${\rm SO}_{2n}$ or a quasi-split unitary group ${\rm U}_N$ (with $N = 2n+1$ or $2n$). We leave the study of ${\rm GSpin}_N$ groups, in addition to non-split quasi-split ${\rm SO}_{2n}^*$ and ${\rm GSpin}_{2n}^*$, for another occasion. Our groups are defined over a global function field $k$. In the case of a quasi-split unitary group, there is a degree-$2$ extension defining the hermitian form. To include this case in a uniform way, we make the following convention: let $K = k$ if ${\bf G}_n$ is split, and $K/k$ denotes a separable quadratic extension if ${\bf G}_n$ is a quasi-split unitary group. The group ${\rm Res}\,{\rm GL}_m$ denotes the usual general linear group ${\rm GL}_m$ in the split case and is obtained via restriction of scalars in the non-split quasi-split case.

\subsection{Exterior square, symmetric square and Asai $L$-functions}

We let $\bf M$ be the Siegel Levi subgroup of ${\bf G}_n$ as in \S~2.2 of \cite{lomeliSiegel}. In the case of a quasi-split unitary group, we locally have that a place $v$ of $k$ may be split. In this case we set $K_v = k_v \times k_v$. In particular, we let $\mathscr{L}_{\rm loc}(p,{\bf G},{\bf M})$ be the class of triples $(K_v/k_v,\pi_v,\psi_v)$ consisting of: a non-archimedean local field $k_v$; $K_v = k_v$ if ${\bf G}_n$ is split; $K_v$ a degree-2 finite \'etale algebra over $k_v$ if ${\bf G}_n$ is a quasi-split unitary group; an irreducible generic representation $\pi$ of ${\bf M}(k_v)$; and, a non-trivial continuous character $\psi_v$ of $k_v$. In our setting, the triples $(K_v/k_v,\pi_v,\psi_v)$ always arise from the corresponding global objects. More precisely, we use the notation $\mathscr{L}_{\rm glob}(p,{\bf G},{\bf M})$ to denote the corresponding global class of quadruples $(K/k,\pi,\psi,S)$.

In these cases, $L$-functions and related local factors arise from representations of ${\rm GL}_n$. Let $\rho_n$ denote the standard representation of ${\rm GL}_n(\mathbb{C})$. The adjoint action is either irreducible, and we write $r = r_1$ in the following cases
\begin{equation}\label{Siegelirred}
   r = \left\{ \begin{array}{ll}
   		{\rm Sym}^2 \rho_n & \text{ if } {\bf G} = {\rm SO}_{2n+1} \\
		\wedge^2 \rho_n & \text{ if } {\bf G} = {\rm SO}_{2n} \\
		r_\mathcal{A} & \text{ if } {\bf G} = {\rm U}_{2n}
	\end{array}, \right.
\end{equation}
or reducible, when we have $r = r_1 \oplus r_2$ as follows
\begin{equation}\label{Siegelred}
   r = \left\{ \begin{array}{ll}
		\rho_n \oplus \wedge^2 \rho_n & \text{ if } {\bf G} = {\rm Sp}_{2n} \\
		(\rho_n \otimes \rho_1) \oplus (r_\mathcal{A} \otimes \eta_{K/k}) & \text{ if } {\bf G} = {\rm U}_{2n+1}
	\end{array} . \right.
\end{equation}
We refer to \cite{lomeliSiegel} for any unexplained notation concerning the Siegel Levi case.

All of these cases share the advantage that the Ramanujan conjecture holds for ${\rm GL}_n$ over function fields thanks to the work of L. Lafforgue \cite{llafforgue2002}. We can actually prove a general theorem for any pair $({\bf G},{\bf M})$ under the assumption that $\pi$ satisfies the Ramanujan conjecture, which we now recall. 

\begin{conjecture}
Let $\pi$ be a globally generic cuspidal automorphic representation of a quasi-split connected reductive group $\bf H$. Then each $\pi_v$ is tempered.
\end{conjecture}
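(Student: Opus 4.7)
The plan rests on the observation that the conjecture as stated, for an arbitrary quasi-split $\bf H$, is a famous open problem; any realistic proof sketch must restrict attention to a setting in which a functorial transfer to a general linear group is available. For the classical groups ${\bf G}_n$ that form the real subject of this section, the natural two-step strategy is: first transfer $\pi$ to an automorphic representation $\Pi$ of an appropriate ${\rm Res}\,{\rm GL}_N(\mathbb{A}_k)$, and then descend temperedness from $\Pi$ to $\pi$ using Lafforgue's resolution of the Ramanujan conjecture for ${\rm GL}_N$ over function fields \cite{llafforgue2002}.

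The transfer $\pi \mapsto \Pi$ would be produced via the Cogdell--Piatetski-Shapiro converse theorem. Its hypotheses ask for control of the Rankin--Selberg $L$-functions $L(s,\pi \times \tau)$ as $\tau$ ranges over cuspidal representations of ${\rm GL}_m(\mathbb{A}_K)$ for small $m$: rationality in $q^{-s}$, standard functional equations, boundedness in vertical strips, and the crucial stability property that after twisting $\tau$ by a sufficiently ramified gr\"ossencharakter the resulting $L$-function becomes a polynomial in $\left\{q^{-s},q^{s}\right\}$. All of these inputs are precisely what the preceding sections have supplied, in particular Theorem~\ref{rationalL} and Proposition~\ref{twistLS}. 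Once the converse theorem produces $\Pi$, decompose it into an isobaric sum $\Pi_1 \boxplus \cdots \boxplus \Pi_t$ of cuspidal representations of general linear groups; Lafforgue gives that each $\Pi_{i,v}$ is tempered, hence so is $\Pi_v$ at every place $v$. Since temperedness is a property of the local Langlands parameter, the local-global compatibility built into the transfer forces $\pi_v$ to be tempered as well.

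The hard part is the functorial transfer itself, and within it the control of local components at ramified places. Obtaining enough analytic information on the twisted $L$-functions in positive characteristic was the historical bottleneck; the results of the preceding sections are designed precisely to remove it, and in the characteristic-$2$ case the Kazhdan transfer machinery of \S~\ref{transfer} is needed to handle the places previously excluded in \cite{lomeli2009}. For a truly general quasi-split reductive group $\bf H$, however, no candidate transfer to a general linear group is available, and the conjecture is out of reach by these methods; this is why the statement is recorded here as an assumption rather than as a theorem.
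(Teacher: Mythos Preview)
You have read the situation correctly: the statement is recorded in the paper as a \emph{Conjecture}, not as a theorem, and there is no proof of it in the paper for a general quasi-split $\mathbf{H}$. Your final paragraph makes exactly this point, so there is nothing to compare against at the level of the stated result itself.

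Your sketch of the two-step strategy for classical groups --- transfer $\pi$ to an automorphic $\Pi$ on ${\rm GL}_N$ via the converse theorem (fed by the rationality, functional equation, and twist-polynomiality results of \S\S\,1--4), write $\Pi$ as an isobaric sum of unitary cuspidals, invoke Lafforgue to get temperedness of each $\Pi_{i,v}$, and descend --- is precisely the route the paper follows later, in \S\,\ref{classical:fff}, to establish Theorem~\ref{RR}(i) for the split classical groups. One refinement worth noting: in the paper the proof that the isobaric constituents $\Pi_i$ are themselves unitary cuspidal (rather than twists by $|\det|^{t_i}$) is not automatic from the converse-theorem output, but is deduced from Theorem~\ref{globholo} on the holomorphy of $L(s,\pi\times\tilde\Pi_1)$ for $\Re(s)>1$; this step logically precedes the appeal to Lafforgue. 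Also, the ``local--global compatibility'' you invoke to descend temperedness to $\pi_v$ is made precise in the paper via equality of local $\gamma$-, $L$-, and $\varepsilon$-factors (the last assertion of Theorem~\ref{fff}), which in turn relies on stability of $\gamma$-factors under highly ramified twists rather than on an abstract matching of Langlands parameters.
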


\begin{remark}\label{Ramrem}
In Section~\ref{classicalLholo} below, we only make use of Corollary~\ref{extsymAholo}, where the assumption of the next proposition holds. We do not assume the Ramanujan conjecture for the classical groups to prove Theorem~\ref{globholo}.
\end{remark}

\begin{proposition}\label{Ramholo}
Let $(K/k,\pi,\psi,S) \in \mathscr{L}_{\rm glob}(p,{\bf G},{\bf M})$, with $\bf M$ a maximal Levi subgroup of a quasi-split connected reductive group $\bf G$. Assume that $\pi$ satisfies the Ramanujan conjecture. Then, for each $i$, $1 \leq i \leq m_r$, the automorphic $L$-function $L(s,\pi,r_i)$ is holomorphic for $\Re(s) > 1$.
\end{proposition}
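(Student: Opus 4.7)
The plan is to decompose $L(s,\pi,r_i)$ into its unramified Euler product and its ramified local factors at $S$, and to handle each piece separately using the Ramanujan hypothesis together with the tempered $L$-function theorem already recorded as Theorem~\ref{temperedL}.

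First I would treat the partial $L$-function $L^S(s,\pi,r_i)=\prod_{v\notin S}L(s,\pi_v,r_{i,v})$. For every $v \notin S$ the local constituent $\pi_v$ is both unramified and, by hypothesis, tempered. The Satake parameter $t_{\pi_v}$ is therefore a bounded semisimple conjugacy class in ${}^LM$, and consequently the eigenvalues $\{\beta_{v,j}\}$ of $r_i(t_{\pi_v})$ all satisfy $|\beta_{v,j}|=1$. Writing
\begin{equation*}
   L(s,\pi_v,r_{i,v}) = \prod_j (1-\beta_{v,j}\,q_v^{-s})^{-1}
\end{equation*}
and expanding the logarithm, one obtains the uniform bound
\begin{equation*}
   \sum_{v\notin S}\bigl|\log L(s,\pi_v,r_{i,v})\bigr| \;\leq\; \dim(r_i)\sum_{v}\sum_{m\geq 1}\frac{q_v^{-m\Re(s)}}{m}.
\end{equation*}
The right-hand side is (up to finitely many missing Euler factors) $\dim(r_i)\cdot \log \zeta_k(\Re(s))$, which converges for $\Re(s)>1$. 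Hence $L^S(s,\pi,r_i)$ converges absolutely on $\Re(s)>1$ and defines a holomorphic, in fact nonvanishing, function there.

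Next I would dispose of the finitely many local factors at $S$. For each $v \in S$, the Ramanujan hypothesis gives that $\pi_v$ is tempered, so Theorem~\ref{temperedL} applies and $L(s,\pi_v,r_{i,v})$ is holomorphic on $\Re(s)>0$, a fortiori on $\Re(s)>1$. Assembling everything,
\begin{equation*}
   L(s,\pi,r_i) \;=\; \Biggl(\prod_{v\in S}L(s,\pi_v,r_{i,v})\Biggr)\cdot L^S(s,\pi,r_i)
\end{equation*}
is a product of finitely many factors each holomorphic on $\Re(s)>1$, which proves the proposition.

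The argument is really a bookkeeping exercise once the right inputs are in place; the essential point, and what one might have feared would be the obstacle, is the holomorphy of the ramified $L$-factors for tempered $\pi_v$. In positive characteristic this is exactly what the Heiermann--Opdam completely local proof delivers via Theorem~\ref{temperedL}. No recourse to the intertwining-operator machinery of Section~\ref{intertwining}, nor to the highly-ramified-twist reduction of Proposition~\ref{twistLS}, is required here, precisely because the Ramanujan assumption has already moved us into the tempered regime at every place.
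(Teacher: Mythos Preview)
Your argument is correct, and it is genuinely different from---and considerably more elementary than---the paper's own proof.

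The paper does not appeal to the convergence of the Euler product directly. Instead, it argues as follows: by Lemma~\ref{tempunramInd}, the local induced representations ${\rm I}(s,\pi_v)$ are irreducible for $\Re(s)>1$ at every unramified place, hence cannot contribute to the residual spectrum, hence the global intertwining operator ${\rm M}(s,\pi,\tilde{w}_0)$ is holomorphic there. Equation~\eqref{regintL} then yields holomorphy of $\prod_i L^S(is,\pi,r_i)/L^S(1+is,\pi,r_i)$, and the Fourier-coefficient identity \eqref{eq1rationalL} combined with the holomorphy of the Eisenstein series controls the denominator $\prod_i L^S(1+is,\pi,r_i)^{-1}$. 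Finally, the induction step of \S~6 of \cite{lsU} is invoked to separate the individual $L^S(s,\pi,r_i)$. The local factors at $v\in S$ are treated via Theorem~\ref{temperedL}, exactly as you do.

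Your route bypasses all of the Eisenstein-series machinery by exploiting the Ramanujan hypothesis directly at the unramified places: the Satake parameters are unitary (indeed the paper itself records this in the proof of Lemma~\ref{tempunramInd}), so the unramified local $L$-factors are termwise dominated by the local zeta factors, and $L^S(s,\pi,r_i)$ converges absolutely for $\Re(s)>1$. This is both shorter and yields the bonus of nonvanishing of the partial $L$-function in that region. The paper's approach, by contrast, is the one that generalises: it is the template used later (e.g.\ in Theorem~\ref{globholo}) where Ramanujan is \emph{not} available for ${\bf G}_n$ and one must genuinely lean on the spectral theory. Under the hypothesis of this proposition, however, your direct estimate is entirely sufficient.

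One minor remark: for the Euler-product bound you only need $|\beta_{v,j}|\le 1$, which already follows from Theorem~\ref{temperedL} applied at the unramified places (a pole of $(1-\beta_{v,j}q_v^{-s})^{-1}$ in $\Re(s)>0$ would contradict holomorphy). The sharper assertion $|\beta_{v,j}|=1$ is also true and is what the paper uses, but it is not needed for your estimate.
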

\begin{proof}
Let $(K/k,\pi,\psi,S) \in \mathscr{L}_{\rm glob}(p,{\bf G},{\bf M})$. From Theorem~\ref{temperedL}, we know that at places $v \in S$ each of the $L$-functions $L(s,\pi_v,r_{i,v})$ is holomorphic for $\Re(s) > 0$.
We begin with the known observation that local components of residual automorphic representations are unitary representations. Furthermore, if the local representation ${\rm I}(s,\pi_v)$ is irreducible, then it cannot be unitary. From Lemma~\ref{tempunramInd}, we conclude that the global intertwining operator ${\rm M}(s,\pi,\tilde{w}_0)$ must be holomorphic for $\Re(s) > 1$. Then, from equation~\eqref{regintL} we conclude that the product
\begin{equation*}
   \prod_{i = 1}^{m_r} \dfrac{L^S(is,\pi,r_i)}{L^S(1+is,\pi,r_i)}
\end{equation*}
is holomorphic on $\Re(s) > 1$. Since the poles of Eisenstein series are contained in the constant terms, with an application of equation~\eqref{eq1rationalL} we can further conclude that
\begin{equation*}
   \prod_{i=1}^{m_r} L^S(1+is,\pi,r_i)^{-1}
\end{equation*}
is holomorphic for $\Re(s) > 1$.
Now, the induction step found in \S~6 of \cite{lsU} allows us to isolate each $L$-function and prove that each
\begin{equation*}
   L^S(s,\pi,r_i)
\end{equation*}
is holomorphic for $\Re(s) > 1$.
\end{proof}


\begin{corollary}\label{extsymAholo}
Let $(K/k,\tau,\psi,S) \in \mathscr{L}_{\rm glob}(p,{\bf G},{\bf M})$, with $\bf M$ the Siegel Levi subgroup of a quasi-split classical group. Then, for each $i$, $1 \leq i \leq m_r$, the automorphic $L$-function $L(s,\tau,r_i)$ is holomorphic and non vanishing for $\Re(s) > 1$.
\end{corollary}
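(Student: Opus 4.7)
The plan is to reduce to Proposition~\ref{Ramholo} by invoking L. Lafforgue's theorem. The first observation is that when $\bf M$ is the Siegel Levi subgroup of a quasi-split classical group ${\bf G}_n$, then $\bf M$ is isomorphic to $\mathrm{GL}_n$ in the split cases $\mathrm{SO}_{2n+1}$, $\mathrm{Sp}_{2n}$, $\mathrm{SO}_{2n}$, and to $\mathrm{Res}_{K/k}\mathrm{GL}_n$ in the unitary cases $\mathrm{U}_N$. Hence the cuspidal automorphic representation $\tau$ of ${\bf M}(\mathbb{A}_k)$ corresponds to a cuspidal automorphic representation of $\mathrm{GL}_n(\mathbb{A}_k)$ or $\mathrm{GL}_n(\mathbb{A}_K)$.

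By \cite{llafforgue2002}, the Ramanujan conjecture holds for cuspidal automorphic representations of $\mathrm{GL}_n$ over any global function field. Consequently each local component $\tau_v$ is tempered, so the hypothesis of Proposition~\ref{Ramholo} is satisfied, yielding the holomorphy of $L(s,\tau,r_i)$ on $\Re(s)>1$ for every $i$ with $1 \leq i \leq m_r$. Note that in the Siegel Levi setting the representation $r$ decomposes as in \eqref{Siegelirred} and \eqref{Siegelred}, so this covers the exterior square, symmetric square and Asai $L$-functions uniformly.

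For the non-vanishing, I would use the Euler product directly. Temperedness of $\tau_v$ implies, via the construction of Langlands-Shahidi local $L$-factors in \cite{lsU} together with Theorem~\ref{temperedL}, that each local factor takes the shape
\begin{equation*}
   L(s,\tau_v,r_{i,v}) = \prod_j (1 - \alpha_{j,v} q_v^{-s})^{-1},
   \qquad |\alpha_{j,v}| \leq 1,
\end{equation*}
and is therefore non-vanishing on $\Re(s)>0$. Moreover, with Ramanujan at our disposal, the Euler product $\prod_v L(s,\tau_v,r_{i,v})$ is absolutely convergent on $\Re(s)>1$ (Theorem~13.2 of \cite{borel}), so $\log L(s,\tau,r_i)$ is given there by an absolutely convergent Dirichlet-type series. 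This forces $L(s,\tau,r_i) \neq 0$ throughout $\Re(s)>1$.

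The main obstacle is really the very first step, namely verifying that Ramanujan is available: once this is secured via Lafforgue, both holomorphy (from Proposition~\ref{Ramholo}) and non-vanishing (from the Euler product and the tempered shape of each local factor) follow without further ingredients. A secondary technical point worth being careful about is that at the ramified places $v \in S$ the local factors $L(s,\tau_v,r_{i,v})$ must also be of tempered shape; this is guaranteed by the ``ab initio'' analysis of the Siegel Levi case in \cite{lomeliSiegel}, which we are entitled to invoke here.
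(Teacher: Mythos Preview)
Your argument is correct. For holomorphy you do exactly what the paper does: the Siegel Levi is ${\rm Res}\,{\rm GL}_n$, Lafforgue gives Ramanujan, and Proposition~\ref{Ramholo} applies.

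For non-vanishing you take a different route from the paper. The paper argues by case analysis on the $r_i$: the standard and twisted standard factors are handled by \cite{js1976,js1981}, and for the remaining cases it uses the factorizations
\begin{equation*}
   L(s,\tau \times \tau) = L(s,\tau,{\rm Sym}^2)\,L(s,\tau,\wedge^2), \qquad
   L(s,\tau \times \tau^\theta) = L(s,\tau,r_\mathcal{A})\,L(s,\tau,r_\mathcal{A}\otimes\eta_{K/k}),
\end{equation*}
together with the Jacquet--Shalika non-vanishing of Rankin--Selberg $L$-functions on $\Re(s)>1$; since each factor on the right is already known to be holomorphic there, none can vanish. Your argument instead exploits Ramanujan directly: the tempered $L$-function conjecture (Theorem~\ref{temperedL}) forces every local factor to have the shape $\prod_j(1-\alpha_{j,v}q_v^{-s})^{-1}$ with $|\alpha_{j,v}|\le 1$, so the Euler product converges absolutely on $\Re(s)>1$ and $\log L(s,\tau,r_i)$ is a convergent series there. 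This is more elementary and uniform (no case split, no appeal to the Rankin--Selberg identities or to \cite{js1981}), though it does require spelling out that Ramanujan pushes the region of absolute convergence down to $\Re(s)>1$; the reference to Theorem~13.2 of \cite{borel} alone only gives $\Re(s)\gg 0$, so you should make the Ramanujan input explicit at that step rather than folding it into the citation. The paper's approach, on the other hand, does not need the absolute-convergence bookkeeping and reduces everything to a single well-known black box.
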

\begin{proof}
The assumptions of Proposition~\ref{Ramholo} are satisfied as noted in Remark~\ref{Ramrem}, which shows the holomorphy result. To prove non vanishing, we look at all possibilities for $r_i$ given by equations~\eqref{Siegelirred} and \eqref{Siegelred}. The $L$-functions corresponding to $\rho_n$ and $\rho_n \otimes \rho_1$ are understood and known to satisfy the properties of the corollary \cite{js1976, js1981}. The $L$-functions $L(s,\tau,r)$ for $r = {\rm Sym}^2$, $\wedge^2$, $r_\mathcal{A}$ or $r_\mathcal{A} \otimes \eta_{K/k}$ satisfy the following relationships
\begin{align*}
   L(s,\tau \times \tau) &= L(s,\tau,{\rm Sym}^2) L(s,\tau,\wedge^2) \\
   L(s,\tau \times \tau^\theta) &= L(s,\tau,r_{\mathcal{A}}) L(s,\tau,r_{\mathcal{A}} \otimes \eta_{K/k}).
\end{align*}
Each $L$-function $L(s,\tau,r)$ appearing on the right hand sides of the previous two equations is holomorphic for $\Re(s) > 1$. The Rankin-Selberg product $L$-functions on the left hand sides are non vanishing for $\Re(s) > 1$ \cite{js1981}. Hence, each $L(s,\tau,r)$ in turn must be non vanishing for $\Re(s) > 1$.
\end{proof}

\subsection{Rankin-Selberg $L$-functions for the classical groups}\label{classicalLholo}
We begin by adapting the notation to the classical groups. Let $\mathscr{L}_{\rm glob}(p,{\bf G}_n,{\rm Res}\,{\rm GL}_m)$ be the class consisting of quintuples $(K/k,\pi,\tau,\psi,S)$: $k$ a global function field of characteristic $p$; $K = k$ if ${\bf G}_n$ is split and $K/k$ a separable quadratic extension if ${\bf G}_n$ is a quasi-split unitary group; $\pi = \otimes' \pi_v$ a globally generic cuspidal automorphic representation of ${\bf G}_n(\mathbb{A}_k)$; $\tau = \otimes' \tau_v$ a cuspidal (unitary) automorphic representation of ${\rm Res}\,{\rm GL}_m(\mathbb{A}_k) = {\rm GL}_m(\mathbb{A}_K)$; $\psi : k \backslash \mathbb{A}_k \rightarrow \mathbb{C}^\times$ a global additive character; and $S$ a finite set of places of $k$ such that $\pi_v$ and $\psi_v$ are unramified for $v \notin S$. In the case of a separable quadratic extension, we can think of the additive character $\psi_K: K \backslash \mathbb{A}_K \rightarrow \mathbb{C}^\times$ obtained via the trace.

\begin{theorem}\label{globholo}
Let $(K/k,\pi,\tau,\psi,S) \in \mathscr{L}_{\rm glob}(p,{\bf G}_n,{\rm Res}\,{\rm GL}_m)$. Then $L(s,\pi \times \tau)$ is holomorphic for $\Re(s) > 1$ and has at most a simple pole at $s = 1$.
\end{theorem}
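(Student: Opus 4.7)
The plan is to instantiate the Langlands--Shahidi machinery for the pair $(\mathbf{G},\mathbf{M})=(\mathbf{G}_{n+m},\mathrm{Res}\,\mathrm{GL}_m\times\mathbf{G}_n)$, taking $\mathbf{P}$ to be the maximal parabolic whose unique removed simple root joins the $\mathrm{GL}$-block to the $\mathbf{G}_n$-block. With $\sigma=\tau\otimes\pi$ viewed as a globally generic cuspidal automorphic representation of $\mathbf{M}(\mathbb{A}_k)$, the quadruple $(K/k,\sigma,\psi,S)$ lies in $\mathscr{L}_{\rm glob}(p)$, and the adjoint decomposition reads $r=r_1\oplus r_2$ with $L(s,\sigma,r_1)=L(s,\pi\times\tau)$ and $L(s,\sigma,r_2)=L(s,\tau,r_2')$ one of the Siegel Levi $L$-functions ($\mathrm{Sym}^2$, $\wedge^2$, Asai, or Asai$\,\otimes\,\eta_{K/k}$) in the variable $\tau$ alone. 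Corollary~\ref{extsymAholo} asserts that $L(s,\tau,r_2')$ is holomorphic and non-vanishing for $\Re(s)>1$, which I will use as the ``known'' half of the induction step from \S~6 of \cite{lsU} to isolate $L(s,\pi\times\tau)$. A short first case: if $\tilde{w}_0(\sigma)\ncong\sigma$, then Corollary~\ref{Lpoly} applied to $\sigma$ directly shows $L(s,\pi\times\tau)$ is a polynomial in $\{q^{-s},q^s\}$, which is already stronger than the claim.

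The substantive case is $\tilde{w}_0(\sigma)\cong\sigma$ (the conjugate-self-dual situation, where genuine poles can occur). I would exploit the Fourier-coefficient identity \eqref{eq1rationalL}, which in our setup reads
\begin{equation*}
   E_\psi(s,\Phi,g,{\bf P})=\prod_{v\in S}\lambda_{\psi_v}(s,\sigma_v)({\rm I}(s,\sigma_v)(g_v)f_{s,v})\cdot L^S(1+s,\pi\times\tau)^{-1}\,L^S(1+2s,\tau,r_2')^{-1}.
\end{equation*}
By Theorem~\ref{precise:Wthm} the Whittaker factors at $v\in S$ are Laurent polynomials in $q_v^{\pm s}$; by Corollary~\ref{extsymAholo} the factor $L^S(1+2s,\tau,r_2')$ is holomorphic and non-vanishing on $\Re(s)>0$ (since there $\Re(1+2s)>1$). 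Consequently, the pole structure of $L^S(1+s,\pi\times\tau)$ on $\Re(s)>0$ is detected by the zero structure of $E_\psi$ there. Here I invoke the spectral theory of Eisenstein series over function fields (\cite{mw1994}~IV.1.11 and V.3, used already in Lemma~\ref{globalMESholo}) combined with Kim's Assumption~\ref{kimA}: because $\mathbf{P}$ is maximal, $E(s,\Phi,g,\mathbf{P})$ is a rank-one Eisenstein family, the normalized intertwining operator is holomorphic and non-vanishing on $\Re(s)\geq 1/2$, so $E$ has only finitely many poles on $\Re(s)>0$, all real and all simple, arising from residues. Transferring this via the identity above yields that $L^S(1+s,\pi\times\tau)$ is holomorphic on $\Re(s)>0$ except for at most simple poles on the real axis, and the functional equation plus the non-vanishing of $L^S(1+2s,\tau,r_2')$ localizes the only possible pole to $s=0$, i.e.\ to $s=1$ for $L^S(s,\pi\times\tau)$.

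Passing from $L^S$ to the complete $L$-function requires controlling the bad local factors $L(s,\pi_v\times\tau_v)$ for $v\in S$. By Lafforgue each $\tau_v$ is tempered; by Theorem~\ref{uclassical:p:bound} the unitary generic $\pi_v$ is a constituent of an induced representation whose Langlands exponents are strictly bounded by $1/2$. Langlands classification together with Shahidi's tempered $L$-function theorem (Theorem~\ref{temperedL}) then gives holomorphy of $L(s,\pi_v\times\tau_v)$ on $\Re(s)>1/2$, so the completed $L(s,\pi\times\tau)$ inherits the $\Re(s)>1$ holomorphy and at-most-simple pole at $s=1$ from its partial counterpart. The hardest step is the self-dual case: one must control both the order and the location of the poles of the rank-one Eisenstein family using the MW spectral decomposition and Assumption~\ref{kimA} in tandem with Corollary~\ref{extsymAholo}, so that the auxiliary factor $L^S(1+2s,\tau,r_2')$ genuinely ``drops out'' in the strip $\Re(s)>0$ and leaves $L^S(1+s,\pi\times\tau)$ isolated with the correct pole behavior.
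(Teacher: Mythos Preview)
Your case split using Corollary~\ref{Lpoly} when $\tilde w_0(\sigma)\ncong\sigma$ is fine, and the local analysis at $v\in S$ is essentially the paper's (though for ${\bf G}_n={\rm SO}_{2n}$ the exponents in Theorem~\ref{uclassical:p:bound} are only bounded by $1$, not $1/2$, so you should claim local holomorphy only for $\Re(s)>1$). The substantive gap is in the self-dual case. You pass from the Fourier-coefficient identity \eqref{eq1rationalL}, which expresses $E_\psi$ as a product involving $L^S(1+s,\pi\times\tau)^{-1}$, to the assertion that poles of $L^S(1+s,\pi\times\tau)$ are controlled by the spectral theory of $E$. But \eqref{eq1rationalL} converts poles of $L^S(1+s,\pi\times\tau)$ into \emph{zeros} of $E_\psi$, and nothing in the spectral decomposition of \cite{mw1994} (nor Assumption~\ref{kimA}) restricts where $E_\psi$ may vanish. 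The information you actually extract from spectral theory is about \emph{poles} of $E$ and of $M(s,\sigma,\tilde w_0)$: these are real and simple on $\Re(s)>0$, but a priori they can sit anywhere in the interval $(0,\langle\rho_{\bf P},\alpha^\vee\rangle]$, so your appeal to ``the functional equation'' to pin them to $s=0$ is not an argument. You are left without any mechanism forcing $M(s,\sigma,\tilde w_0)$---hence, via \eqref{regintL} and Assumption~\ref{kimA}, the quotient $L^S(s,\pi\times\tau)/L^S(1+s,\pi\times\tau)$---to be holomorphic on $\Re(s)>1$.

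The paper fills this gap with a unitarity argument you do not invoke: if $M(s,\sigma,\tilde w_0)$ had a pole at $s_0$ with $\Re(s_0)>1$, a subquotient of ${\rm I}(s_0,\sigma)$ would lie in the residual spectrum, forcing ${\rm I}(s_0,\sigma_v)$ to be unitary for almost all $v$. At an unramified $v\notin S$ this is a principal series of the larger classical group ${\bf G}_{m+n}(k_v)$ with an exponent of real part $\Re(s_0)>1$, contradicting the Lapid--Mui\'c--Tadi\'c bound (Theorem~\ref{uclassical:p:bound}, now applied to ${\bf G}_{m+n}$). This gives holomorphy of $M$ on $\Re(s)>1$; then \eqref{regintL}, Corollary~\ref{extsymAholo} for the $r_2$-factor, and \eqref{eq1rationalL} for non-vanishing of $L^S(1+s,\pi\times\tau)$ combine to yield holomorphy of $L^S(s,\pi\times\tau)$ on $\Re(s)>1$. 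The at-most-simple pole at $s=1$ then comes from simplicity of the poles of $M$ (\cite{mw1994}) together with the non-vanishing of $L^S(2s,\tau,r_2)$ and $L^S(1+2s,\tilde\tau,r_2)$ at $s=1$ from Corollary~\ref{extsymAholo}.
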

\begin{proof}
Thanks to the work of L. Lafforgue \cite{llafforgue2002}, each local component $\tau_v$ of the cuspidal unitary $\tau$ arises from an induced representation of the form
\begin{equation*}
   {\rm Ind}(\tau_{1,v} \otimes \cdots \otimes \tau_{f,v}),
\end{equation*}
with each $\tau_{l,v}$ tempered. Also, the classification of generic unitary representations of ${\bf G}_n(k_v)$ gives that every $\pi_v$ is a constituent of
\begin{equation}\label{eq1globholo}
   {\rm Ind}(\delta_{1,v} \nu^{\beta_1} \otimes \cdots \otimes \delta_{d,v} \nu^{\beta_d} \otimes
   \delta_{1,v}' \nu^{\alpha_1} \otimes \cdots \otimes \delta_{e,v}' \nu^{\alpha_e} \otimes \pi_{0,v}).
\end{equation}
Here, the notation is that of \eqref{LaMuTa:bound} and Theorem~\ref{uclassical:p:bound} for the split classical groups. For the quasi-split unitary groups see Remark~\ref{rmk:ubound}.

Then, the multiplicativity property of Langlands-Shahidi $L$-functions gives
\begin{align*}
   L(s,\pi_v \times \tau_v) = L(s,\pi_{v,0} \times \tau_v) & \prod_{i=1}^d \prod_{j=1}^e \prod_{l=1}^f L(s + \beta_i, \delta_{i,v} \times \tau_{l,v}) L(s + \alpha_j, \delta_{j,v}' \times \tau_{l,v}) \\
   \times &\prod_{i=1}^d \prod_{j=1}^e \prod_{l=1}^f L(s - \beta_i, \delta_{i,v} \times \tau_{l,v}) L(s - \alpha_j, \delta_{j,v}' \times \tau_{l,v}).
\end{align*}
From Theorem~\ref{temperedL}, each of the $L$-functions appearing in the right hand side is holomorphic for $\Re(s)$ large enough. This carries through to the left hand side and we can conclude that $L(s,\pi_v \times \tau_v)$ is holomorphic for $\Re(s) > \beta_1$. In particular, for $\Re(s) > 1$.

Now, we globally let $\sigma = \tau \otimes \tilde{\pi}$, so that $(K/k,\sigma,\psi,S) \in \mathscr{L}_{\rm glob}(p,{\bf G},{\bf M})$. Where ${\bf G}$ is a classical group of rank $l = m+n$ of the same type as ${\bf G}_n$ and ${\bf M} = {\rm Res}\,{\rm GL}_m \times {\bf G}_n$ is a maximal Levi subgroup. 
As observed in the number fields case by H. H. Kim in \cite{kim2000}, and from \cite{mw1994, morris1982} over function fields, if the global intertwining operator ${\rm M}(s,\Phi,g,{\bf P})$ has a pole at $s_0$, then a subquotient of ${\rm I}(s_0,\sigma)$ would belong to the residual spectrum and we would have that almost every ${\rm I}(s_0,\sigma_v)$ is unitary.

However, to obtain a contradiction, we claim for $\Re(s) > 1$ that the representation ${\rm I}(s,\sigma_v)$ cannot be unitary for at least one $v \notin S$ (we can actually show this for all $v \notin S$). For this, we begin by fixing a place $v \notin S$, which we assume remains inert if we are in the case of a non-split quasi-split classical group (see Remark~\ref{rmk:ubound}). In these cases we now apply equation~\eqref{eq1globholo} for the group ${\bf G}_l$. If ${\rm I}(s_0,\sigma_v)$ were unitary then, up to rearrangement if necessary, it would be of the form
\begin{equation}
   {\rm Ind}(\chi_{1,v}\nu^{s_0} \otimes \cdots \otimes \chi_{m,v}\nu^{s_0} \otimes \mu_{1,v} \nu^{\beta_1} \otimes \cdots \otimes \mu_{d,v} \nu^{\beta_d} \otimes
   \mu_{1,v}' \nu^{\alpha_1} \otimes \cdots \otimes \mu_{e,v}' \nu^{\alpha_e} \otimes \mu_{0,v}),
\end{equation}
where we now have unramified unitary characters $\mu_{0,v}$, $\mu_{i,v}$ and $\mu_{j,v}'$; the character $\mu_{0,v}$ is taken to be trivial unless we are in the odd unitary group case. The Langlands parameters are of the form $1 > \beta_1 > \cdots \beta_d > 1/2 > \alpha_1 > \cdots > \alpha_e > 0$. The classification tells us that this cannot be the case if $\Re(s_0) > 1$. Hence, the global intertwining operator ${\rm M}(s,\Phi,g,{\bf P})$ must be holomorphic for $\Re(s) > 1$.

The product
\begin{equation*}
   \prod_{i = 1}^{m_r} \dfrac{L^S(is,\sigma,r_i)}{L^S(1+is,\sigma,r_i)}
\end{equation*}
is then holomorphic on $\Re(s) > 1$. For the classical groups we precisely have that $m_r = 2$ and
\begin{equation}\label{eitherL}
   \prod_{i=1}^{m_r}L^S(s,\sigma,r_i) = L^S(s,\sigma,r_{1}) L^S(2s,\tau,r_{2}),
\end{equation}
where $r_{1} = \rho_n \otimes \tilde{\rho}_m$ and $L^S(s,\sigma,r_2) = L^S(s,\tau,r_{2})$ has $r_{2}$ either an exterior square, symmetric square or Asai $L$-function. The induction step is given by the Siegel Levi case of Corollary~\ref{extsymAholo}. We can then cancel the second $L$-functions and conclude that the quotient
\begin{equation}\label{eq2globholo}
   \dfrac{L^S(s,\pi \times \tau)}{L^S(1 + s,\pi \times \tau)}
\end{equation}
is holomorphic for $\Re(s) > 1$.

Finally, the poles of Eisenstein series are contained in the constant terms, hence equation~\eqref{eq1rationalL} gives that $L^S(1 + s,\pi \times \tau)^{-1}$ is holomorphic for $\Re(s) >1$. In this way, we can cancel the now non-zero denominator in equation~\eqref{eq2globholo} to conclude that $L^S(s,\pi \times \tau)$ is holomorphic on $\Re(s) >1$. We have also noted that $L(s,\pi_v \times \tau_v)$ is holomorphic for $\Re(s) > 1$ at every place $v \in S$. We then obtain the required holomorphy property for the completed $L$-function $L(s,\pi \times \tau)$.

Now, we have that the poles of the global intertwining operator $M(s,\sigma,\tilde{w}_0)$ are all simple \cite{mw1994}. Then, by equation~\eqref{regintL}, the quotient
\begin{equation*}
   \dfrac{L^S(s,\pi \times \tau)L^S(2s,\tau,r_2)}{L^S(1+s,\tilde{\pi} \times \tilde{\tau})L^S(1+2s,\tilde{\tau},r_2)}
\end{equation*}
has at most a simple pole at $s = 1$. From Corollary~\ref{extsymAholo}, $L^S(2,\tau,r_2)$ and $L^S(3,\tilde{\tau},r_2)$ are different from zero. Thus, $L(s,\pi \times \tau)$ has at most a simple pole at $s = 1$.
\end{proof}

\begin{remark}\label{rmk:ubound} The case of unitary groups can also be established by choosing a split place $v_0$, and using the known result for ${\rm GL}_n$. This is because the argument only requires one place $v_0$ where ${\rm I}(s,\sigma_{v_0})$ cannot be unitary for $\Re(s) > 1$. Each representation $\pi_{w_i}$, $i=1$ or $2$, $w_i \vert v_0$ is known to have Langlands parameters $0 \leq t_{i,d_i} \leq \cdots \leq t_{i,1}< 1/2$.
\end{remark}

\section{On functoriality for the classical groups}\label{classical:fff}

Throughout this section, we restrict ourselves to the split case. Theorems~\ref{RR} and \ref{fff} were established in \cite{lomeli2009, ls} under the assumption ${\rm char}(k) \neq 2$; we now remove this restriction. The corresponding results for the quasi-split unitary groups are proved in \cite{lsU}.

\begin{theorem}\label{RR}
Let ${\bf G}_n$ be a split classical group defined over a function field $k$.
\begin{itemize}
   \item[(i)] \emph{(Ramanujan Conjecture).} If $\pi = \otimes' \pi_v$ is a globally generic cuspidal automorphic representation of ${\bf G}_n(\mathbb{A}_k)$, then each local component $\pi_v$ is tempered.
\end{itemize}
Let $(k,\pi,\tau,\psi) \in \mathscr{L}_{\rm glob}(p,{\bf G}_n,{\bf G}_m)$, with ${\bf G}_m$ either ${\rm GL}_m$ or a split classical group of rank $m$. Then
\begin{itemize}
   \item[(ii)] \emph{(Rationality).} $L(s,\pi \times \tau)$ converges absolutely on a right half plane and has a meromorphic continuation to a rational function on $q^{-s}$.
   \item[(iii)] \emph{(Functional Equation).} $L(s,\pi \times \tau) = \varepsilon(s,\pi \times \tau) L(1-s,\tilde{\pi} \times \tilde{\tau})$.
   \item[(iv)] \emph{(Riemann Hypothesis).} The zeros of $L(s,\pi \times \tau)$ are contained in the line $\Re(s) = 1/2$.
\end{itemize}
\end{theorem}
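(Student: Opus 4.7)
The plan is to follow the Converse Theorem strategy of \cite{lomeli2009, ls}, observing that the restriction $\mathrm{char}(k)\neq 2$ in those works arose solely from the unavailability of certain analytic inputs for split classical groups in residual characteristic two. Those inputs are now furnished uniformly in the characteristic by Theorem~\ref{rationalL}, Proposition~\ref{twistLS}, Theorem~\ref{globholo}, and the Kazhdan transfer techniques of \S\ref{transfer}.

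First I would verify the ``nice'' hypotheses of the Converse Theorem of Cogdell and Piatetski-Shapiro for the family $\{L(s,\pi\times\tau)\}$, where $\tau$ ranges over cuspidal unitary automorphic representations of $\mathrm{GL}_m(\mathbb{A}_k)$ with $m<N$. Rationality in $q^{-s}$ is Theorem~\ref{rationalL}; the functional equation — which is precisely statement (iii) — is the Langlands-Shahidi functional equation, Theorem~11.1 of \cite{lsU}; holomorphy for $\Re(s)>1$ with at most a simple pole at $s=1$ is Theorem~\ref{globholo}; and entireness after twisting $\pi$ by a sufficiently ramified gr\"ossencharakter is Proposition~\ref{twistLS}, which in fact promotes the twisted $L$-function to a polynomial in $q^{\pm s}$. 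Boundedness in vertical strips is automatic for rational functions in $q^{-s}$ once the poles have been localized.

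Feeding these inputs into the Converse Theorem produces a weak functorial transfer $\Pi$ of $\pi$ to an automorphic representation of $\mathrm{GL}_N(\mathbb{A}_k)$, where $N$ depends on the type of ${\bf G}_n$. Decomposing $\Pi=\Pi_1\boxplus\cdots\boxplus\Pi_r$ as an isobaric sum of unitary cuspidal representations of smaller general linear groups and invoking Lafforgue's theorem \cite{llafforgue2002}, every local component of every $\Pi_i$ is tempered. At unramified places the Satake parameters of $\pi_v$ match those of $\Pi_v$; at ramified places Theorem~\ref{uclassical:p:bound} realizes $\pi_v$ as a constituent of an induced representation from tempered data whose Langlands parameters are controlled by those on the general linear side, which now all vanish. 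Hence $\pi_v$ is tempered for every $v$, giving (i). Part (ii) is Theorem~\ref{rationalL}, and part (iii) is the Langlands-Shahidi functional equation already used above. For (iv), the equality $L(s,\pi\times\tau)=L(s,\Pi\times\tau)$ at almost every place, combined with the Riemann Hypothesis for general linear $L$-functions over function fields (also a consequence of Lafforgue's global Langlands correspondence), confines the zeros of $L(s,\pi\times\tau)$ to the line $\Re(s)=1/2$; one then checks that the finitely many local factors at places in $S$ — which are $L$-factors of generic representations — introduce no additional zeros off the critical line.

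The main obstacle in earlier treatments was precisely the analytic input supplied by Theorem~\ref{globholo}: in characteristic two, certain inductive steps in the holomorphy argument for Rankin-Selberg $L$-functions for classical groups broke down. With Proposition~\ref{Ramholo} and the Kazhdan transfer of \S\ref{transfer} removing that obstacle, the Converse Theorem machinery runs uniformly in $p$. The remaining work — matching ramified local data, checking the compatibility of local components of the Converse-Theorem output with $\pi$ at places in $S$, and verifying that the local factors at bad places do not contribute off-critical-line zeros — proceeds exactly as in \cite{lomeli2009}, with no further dependence on the residual characteristic.
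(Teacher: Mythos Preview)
Your proposal is essentially correct and follows the same route as the paper: the paper's proof of Theorem~\ref{RR} is simply a pointer to \cite{lomeli2009, ls}, with the observation that the missing ingredient (removing the restriction ${\rm char}(k)\neq 2$) is now supplied by establishing Theorem~\ref{fff} in a characteristic-free way via the new analytic inputs (Theorems~\ref{rationalL} and \ref{globholo}, Proposition~\ref{twistLS}, Corollary~\ref{Lpoly}). Your sketch of the Converse Theorem machinery and its inputs matches this exactly.

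One small point of logical ordering: your deduction of temperedness at ramified places invokes Theorem~\ref{uclassical:p:bound} and then asserts that the Langlands parameters ``are controlled by those on the general linear side, which now all vanish.'' But Theorem~\ref{uclassical:p:bound} by itself only bounds the parameters by $1$ (or $1/2$); the vanishing comes from matching $\pi_v$ with $\Pi_v$ under the \emph{strong} local lift, which is the content of Theorem~\ref{fff} and requires the stability-of-$\gamma$-factors argument you mention only later as ``remaining work.'' The paper (and \cite{lomeli2009}) proceed in the order: first upgrade the weak lift to the strong lift of Theorem~\ref{fff} (using Theorem~\ref{globholo} and Corollary~\ref{Lpoly} to pin down the isobaric decomposition, then stability at bad places), and only then read off Ramanujan from Lafforgue applied to $\Pi$. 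Reordering your argument this way closes the apparent gap.
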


The proof of the Ramanujan conjecture is essentially that of \cite{lomeli2009}. And the proofs of Properties~(ii)--(iv) are those appearing in \cite{ls}. However, they are completed with the results of this article. Basically, we now prove Theorem~\ref{fff} below in a characteristic free way for any global function field.

Let us summarize the results of \cite{lomeli2009} on the globally generic functorial lift for the classical groups.  Let ${\bf G}_n$ be a split classical group of rank $n$ defined over a global function field $k$. The functorial lift of \cite{lomeli2009} takes globally generic cuspidal automorphic representations $\pi$ of ${\bf G}_n(\mathbb{A}_k)$ to automorphic representations of ${\bf H}_N(\mathbb{A}_k)$, where ${\bf H}_N$ is chosen by the following table.

\begin{center}
\begin{tabular}{|c|c|c|} \hline
   ${\bf G}_n$			& ${}^LG_n \hookrightarrow {}^LH_N$							& ${\bf H}_N$ \\ \hline
   ${\rm SO}_{2n+1}$	& ${\rm Sp}_{2n}(\mathbb{C}) \times \mathcal{W}_k \hookrightarrow {\rm GL}_{2n}(\mathbb{C}) \times \mathcal{W}_k$		& ${\rm GL}_{2n}$ \\
   ${\rm  Sp}_{2n}$		& ${\rm SO}_{2n+1}(\mathbb{C}) \times \mathcal{W}_k  \hookrightarrow {\rm GL}_{2n+1}(\mathbb{C}) \times \mathcal{W}_k$	& ${\rm GL}_{2n+1}$ \\
   ${\rm SO}_{2n}$		& ${\rm SO}_{2n}(\mathbb{C}) \times \mathcal{W}_k \hookrightarrow {\rm GL}_{2n}(\mathbb{C}) \times \mathcal{W}_k$		&${\rm GL}_{2n}$ \\
   \hline
\end{tabular}

\smallskip
Table~1
\end{center}

\begin{theorem}\label{fff}
Let ${\bf G}_n$ be a split classical group. Let $\pi$ be a globally generic cuspidal representation of ${\bf G}_n(\mathbb{A}_k)$; $n \geq 2$ if ${\bf G}_n = {\rm SO}_{2n}$. Then, $\pi$ has a functorial lift to an automorphic representation $\Pi$ of ${\bf H}_N(\mathbb{A}_k)$. It has trivial central character and can be expressed as an isobaric sum
\begin{equation*}
   \Pi = \Pi_1 \boxplus \cdots \boxplus \Pi_d,
\end{equation*}
where each $\Pi_i$ is a unitary self-dual cuspidal automorphic representation of ${\rm GL}_{N_i}(\mathbb{A}_k)$ such that $\Pi_i \ncong \Pi_j$ for $i \neq j$. Furthermore, $\Pi_v$ is the local Langlands functorial lift of $\pi_v$ at every place $v$ of $k$. That is, for every $(k_v,\pi_v,\tau_v,\psi_v) \in \mathscr{L}_{\rm loc}(p,{\bf G}_n,{\rm GL}_m)$ there is equality of local factors
\begin{align*}
   \gamma(s,\Pi_v \times \tau_v,\psi_v) &= \gamma(s,\pi_v \times \tau_v,\psi_v) \\
   L(s,\Pi_v \times \tau_v)& = L(s,\pi_v \times \tau_v) \\
   \varepsilon(s,\Pi_v \times \tau_v,\psi_v) &= \varepsilon(s,\pi_v \times \tau_v,\psi_v).
\end{align*}
\end{theorem}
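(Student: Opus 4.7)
The plan is to run the Converse Theorem argument of \cite{lomeli2009}, which applies the Cogdell-Piatetski-Shapiro Converse Theorem for ${\rm GL}_N$, and to substitute in the characteristic-free analytic results now available in this paper in place of each step that previously forced the hypothesis ${\rm char}(k) \neq 2$. First I would construct a candidate lift $\Pi = \otimes' \Pi_v$ place by place: at unramified $v$, $\Pi_v$ is determined by pushing the Satake parameter of $\pi_v$ along the embedding ${}^LG_n \hookrightarrow {}^LH_N$ of Table~1; at ramified $v$, one uses the local lift produced by the Langlands-Shahidi method, matching local $\gamma$-factors via the Kazhdan transfer of \S~\ref{transfer} (specifically Theorem~\ref{transfer:L3}) to reduce the identity of local factors to the characteristic-zero case, where local Langlands functoriality for classical groups is known.

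Next I would verify that for every cuspidal automorphic representation $\tau$ of ${\rm GL}_m(\mathbb{A}_k)$ in the range required by the Converse Theorem, the twisted $L$-function $L(s,\Pi \times \tau)$ is \emph{nice}, i.e.\ entire, bounded in vertical strips of finite width (here, a rational function in $q^{-s}$ with no poles suffices), and satisfies the standard functional equation. By the equality of local factors at every place of the candidate lift, this reduces to the same properties for the Langlands-Shahidi $L$-function $L(s,\pi \times \tau)$. Rationality and the functional equation come from Theorem~\ref{rationalL} and Theorem~11.1 of \cite{lsU} respectively. Entirety is the delicate point: following Kim-Shahidi, I twist $\tau$ by a highly ramified gr\"ossencharakter and apply Proposition~\ref{twistLS} to make the $L$-function a Laurent polynomial in $q^{-s}$, combined with Theorem~\ref{globholo} to locate the possible poles only at $s=0,1$ and remove them via the swapping argument using twists at a further auxiliary place.

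The Converse Theorem then yields an automorphic representation $\Pi'$ of ${\bf H}_N(\mathbb{A}_k)$ whose local components agree with $\Pi_v$ outside a finite set of places. The isobaric decomposition $\Pi = \Pi_1 \boxplus \cdots \boxplus \Pi_d$ follows from the Moeglin-Waldspurger classification of the discrete spectrum of ${\rm GL}_N$, which is valid over function fields thanks to Lafforgue; the generic unitary dual description from Theorem~\ref{uclassical:p:bound} together with the temperedness/unitarity constraints at unramified places forces each $\Pi_i$ to be cuspidal and unitary. Self-duality is built into the image of the $L$-embedding, and the distinctness $\Pi_i \ncong \Pi_j$ for $i \neq j$ is obtained by counting the order of the pole of $L^S(s, \pi \times \tilde{\Pi}_i)$ at $s=1$ using the at-most-simple-pole statement of Theorem~\ref{globholo} applied to the factorization of the partial $L$-function through the isobaric decomposition.

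The hard part will be securing the place-by-place identity of local factors, since the Converse Theorem on its own only controls $\Pi$ up to almost-everywhere equivalence. In characteristic zero one invokes the local Langlands correspondence and stability of local factors under highly ramified twists to upgrade to an everywhere-local identity; in our setting, the corresponding stability must be transferred from characteristic zero via Theorem~\ref{transfer:L3}, which is now available in \emph{any} residual characteristic including $p = 2$, precisely because the Kazhdan transfer of \S~\ref{transfer} respects the Langlands-Shahidi local factors and their behavior under highly ramified twists. Once this is in place, the argument of \cite{lomeli2009} proceeds verbatim, and the restriction ${\rm char}(k) \neq 2$ is lifted.
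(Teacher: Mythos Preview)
Your proposal has a genuine gap in the self-duality step, and it also diverges from the paper's strategy in a way worth noting.

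\medskip
\textbf{The gap.} You assert that self-duality of each isobaric constituent $\Pi_i$ is ``built into the image of the $L$-embedding''. What the $L$-embedding gives is only that the full lift satisfies $\Pi \cong \tilde{\Pi}$; by uniqueness of the isobaric decomposition this yields a permutation $\sigma$ with $\tilde{\Pi}_i \cong \Pi_{\sigma(i)}$, but $\sigma$ could have $2$-cycles, so it does \emph{not} force $\Pi_i \cong \tilde{\Pi}_i$. The paper closes this gap with Corollary~\ref{Lpoly}: if $\Pi_i \ncong \tilde{\Pi}_i$ then, putting $\sigma = \tilde{\Pi}_i \otimes \tilde{\pi}$ on the Levi ${\rm GL}_{N_i} \times {\bf G}_n$, one has $\tilde{w}_0(\sigma) \ncong \sigma$, so $L(s,\pi \times \tilde{\Pi}_i)$ is a polynomial and in particular has no pole; but the product $\prod_j L(s,\Pi_j \times \tilde{\Pi}_i)$ has a pole at $s=1$ from the factor $j=i$, a contradiction. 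You need this (or an equivalent pole-counting) argument.

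\medskip
\textbf{Difference in overall route.} The paper does \emph{not} re-run the Converse Theorem. It takes the weak lift $\Pi'$ of Theorem~8.5 of \cite{lomeli2009} as already valid in any characteristic, passes to the isobaric representative $\Pi = \Pi_1' \boxplus \cdots \boxplus \Pi_d'$ via Jacquet--Shalika, and then uses the new holomorphy results of this paper to prove the refined structure: Theorem~\ref{globholo} (holomorphy of $L(s,\pi \times \tau)$ for $\Re(s)>1$) forces all the exponents $t_i$ in $\Pi_i' = \left|\det\right|^{t_i}\Pi_i$ to vanish (unitarity) and, via the ``at most a simple pole at $s=1$'' clause, forces $\Pi_i \ncong \Pi_j$ for $i\neq j$; Corollary~\ref{Lpoly} then gives self-duality as above. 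Your route through Theorem~\ref{uclassical:p:bound} for unitarity of the $\Pi_i$ is vaguer and less direct than this pole argument. For the everywhere-local identity of factors, the paper invokes stability of $\gamma$-factors under highly ramified twists (Theorem~6.10 of \cite{lomeli2009}), explicitly noting that this is already available in characteristic two, together with Proposition~9.5 and Lemmas~9.2--9.3 of \cite{lsU}; it does not use the Kazhdan transfer of Theorem~\ref{transfer:L3} for this step. Your Kazhdan-transfer route may be workable, but it imports more (e.g.\ local Langlands for classical groups in characteristic zero) than the paper actually needs.
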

\begin{proof}
Let $\Pi'$ be the automorphic representation of ${\rm GL}_N(\mathbb{A}_k)$ obtained from $\pi$ via the weak functorial lift of Theorem~8.5 of \cite{lomeli2009}. This lift has the property that $\Pi_v'$ is the unramified lift of $\pi_v$ for all $v \notin S$ of \S~8.2.1 of [\emph{loc.\,cit.}]. From the classification of automorphic forms for ${\rm GL}_N$ \cite{js1981}, it is possible to find a globally generic representation $\Pi$ of ${\rm GL}_N(\mathbb{A}_k)$ such that
\begin{equation*}
   \Pi_v \cong \Pi_v'
\end{equation*}
for almost all places $v$ of $k$. It is the unique generic subquotient of an globally induced representation of the form ${\rm Ind}(\Pi_1' \otimes \cdots \otimes \Pi_d')$
with each $\Pi_i'$ a cuspidal representation of ${\rm GL}_{N_i}(\mathbb{A}_k)$. We write this as an isobaric sum
\begin{equation*}
   \Pi =  \Pi_1' \boxplus \cdots \boxplus \Pi_d',
\end{equation*}
The representation $\Pi$ is unitary, what we need to show is that each $\Pi_i'$ is unitary. For this, write $\Pi_i' = \left| \det(\cdot) \right|^{t_i} \Pi_i$ with each $\Pi_i$ unitary and $t_d \geq \cdots \geq t_1$, $t_1 \leq 0$. Because $\Pi$ is unitary, we cannot have $t_1>0$. Consider the quadruple $(k,\pi,\Pi_1,\psi,S) \in \mathscr{L}_{\rm glob}(p,{\bf G}_n,{\rm GL}_{N_1})$, then
\begin{align*}
   L(s,\pi \times \tilde{\Pi}_1) &= L(s,\Pi \times \tilde{\Pi}_1) \\
   						&= \prod_{i} L(s + t_i,\Pi_i \times \tilde{\Pi}_1). \nonumber
\end{align*}
From Theorem~\ref{globholo} we have that $L(s,\pi \times \tilde{\Pi}_1)$ is holomorphic for ${\rm Re}(s) > 1$. However, $L(s + t_1,\Pi_1 \times \tilde{\Pi}_1)$ has a simple pole at $s = 1 - t_1 \geq 1$. This pole carries through to a pole of $L(s,\pi \times \tilde{\Pi}_1)$, which gives a contradiction unless $t_1 = 0$. A recursive argument shows that all the $t_i$'s must be zero.

Also, every time we have $\Pi_i \cong \Pi_j$, we add to the multiplicity of the pole at $s=1$ of the $L$-function
\begin{equation}\label{eq1:fff}
   L(s,\pi \times \tilde{\Pi}_i) = \prod_{j} L(s,\Pi_j \times \tilde{\Pi}_i).
\end{equation}
From Theorem~\ref{globholo}, $L(s,\pi \times \tilde{\Pi}_j)$ has at most a simple pole at $s = 1$. Hence, we must have $\Pi_i \ncong \Pi_j$, for $i \neq j$.

To show that each $\Pi_i$ is self-dual, we argue by contradiction and assume $\Pi_i \ncong \tilde{\Pi}_i$. Notice that for $(k,\pi,\Pi_i,\psi,S) \in \mathscr{L}_{\rm glob}(p,{\bf G}_n,{\rm GL}_{N_i})$, the representation $\sigma = \tilde{\Pi}_i \otimes \tilde{\tau}$ of ${\bf M}(\mathbb{A}_k)$ satisfies $w_0(\sigma) \ncong \sigma$. From Corollary~\ref{Lpoly}, the $L$-function $L(s,\pi \times \tilde{\Pi}_i)$ has no poles. Now, we have that a pole appears on the right hand side of equation~\eqref{eq1:fff} from the $L$-function $L(s,\Pi_i \times \tilde{\Pi}_i)$. This is a contradiction, unless $\Pi_i$ is self-dual.

We now follow the proof 
of Proposition~9.5 of \cite{lsU} to obtain that $\Pi_v$ is the unique local Langlands functorial lift of $\pi_v$ at every place $v$ of $k$. A crucial step is the stability property of $\gamma$-factors under twists by highly ramified characters, Theorem~6.10 of \cite{lomeli2009}, which is available in characteristic two. Lemmas~9.2 and 9.3 of \cite{lsU} indicate how to obtain the desired equality of local factors.
\end{proof}


\begin{thebibliography}{99}

\bibitem{AuBaPlSopreprint} A.-M. Aubert, P. Baum, R. Plymen and M. Solleveld, \emph{The local Langlands correspondence for inner forms of ${\rm SL}_n$}, preprint.

\bibitem{BaHeLeSe2010} A. I. Badulescu, G. Henniart, B. Lemaire and V. S\'echerre, \emph{Sur le dual unitaire de ${\rm GL}_r(D)$}, Amer. J. Math. {\bf 132} (2010), 1365-1396.

\bibitem{borel} A. Borel, \emph{Automorphic $L$-functions}, Proc. Symp. Pure Math., vol. 33, part 2, Amer. Math. Soc., Providence, RI, 1979, pp. 27-61.

\bibitem{bg1992} D. Bump and D. Ginzburg, \emph{Symmetric square $L$-functions on ${\rm GL}(r)$}, Annals of Math. {\bf 136} (1992), 137-205.

\bibitem{casselman} W. Casselman, \emph{Introduction to the theory of admissible representations of $\mathfrak{p}$-adic reductive groups}, notes.

\bibitem{casselmanletter} \rule{3em}{.5pt}\thinspace, \emph{Letter to Shahidi}, \texttt{http://www.math.ubc.ca/$\sim$cass/research/pdf/W.pdf}

\bibitem{cs1980} W. Casselman and J. A. Shalika, \emph{The unramified principal series of $p$-adic groups II: the Whittaker function"}, Compositio Math. {\bf 41} (1980), 207-231.

\bibitem{Ga2015} R. Ganapathy, \emph{The local Langlands correspondence for ${\rm GSp}_4$ over local function fields}, Amer. J. Math. {\bf 147} (2015), 1441-1534.

\bibitem{gl2015} R. Ganapathy and L. Lomel\'i, {\it On twisted exterior and symmetric square $\gamma$-factors}, to appear in Annales IF {\bf 65} (2015).

\bibitem{GaVaJIMJ} R. Ganapathy and S. Varma \emph{On the local Langlands correspondence for split classical groups over local function fields}, J. Math. Inst. Jussieu (published online 2015).

\bibitem{harder1974} G. Harder, \emph{Chevalley groups over function fields and automorphic forms}, Annals of Math. {\bf 100} (1974), 249-306.

\bibitem{ho2013} V. Heiermann and E. Opdam, \emph{On the tempered $L$-function conjecture}, Amer. J. Math. {\bf 137} (2013), 777-800.

\bibitem{js1976} H. Jacquet and J. A. Shalika, \emph{A non-vanishing theorem for zeta functions of ${\rm GL}_n$}, Invent. Math. {\bf 38} (1976), 1-16.

\bibitem{js1981} \rule{3em}{.5pt}\thinspace, \emph{On Euler products and the classification of automorphic representations I, II},
Amer. J. Math. {\bf 103} (1981), 499-558 and 777-815.

\bibitem{Ka1986} D.~Kazhdan, \emph{Representations of groups over close local fields}, J. Analyse Math. \textbf{47} (1986), 175--179.

\bibitem{k1999} H. H. Kim, \emph{Langlands-Shahidi method and poles of automorphic $L$-functions: application to exterior square $L$-functions}, Canad. J. Math. {\bf 51} (1999), 835-849.

\bibitem{kim2000} \rule{3em}{.5pt}\thinspace, \emph{Langlands-Shahidi method and poles of automorphic $L$-functions II}, Israel J. Math {\bf 117} (2000), 261-284.

\bibitem{kimnotes2004}\rule{3em}{.5pt}\thinspace, \emph{Automorphic $L$-functions}, in [J. W. Cogdell, H. H. Kim and M. R. Murty, \emph{Lectures on automorphic $L$-functions}, Fields Institute Monographs, vol. 20, AMS, Providence, RI, 2004].

\bibitem{k2005}\rule{3em}{.5pt}\thinspace, \emph{On local $L$-functions and normalized intertwining operators}, Canad. J. Math. {\bf 57} (2005), 535-597.

\bibitem{kk2011} H. H. Kim and W. Kim, \emph{On local $L$-functions and normalized intertwining operators II; quasi-split groups}, Clay Math. Proc. {\bf 13} (2011), 265-295.

\bibitem{ks2002} H. H. Kim and F. Shahidi, \emph{Functorial products for ${\rm GL}_2 \times {\rm GL}_3$ and the symmetric cube for ${\rm GL}_2$}, Annals of Math. {\bf 155} (2002), 837-893.

\bibitem{ks2004} \rule{3em}{.5pt}\thinspace, \emph{On simplicity of poles of automorphic $L$-functions}, J. Ramanujan Math. Soc. {\bf 19} (2004), 267-280.

\bibitem{llafforgue2002} L. Lafforgue, \emph{Chtoucas de Drinfeld et correspondance de Langlands}, Invent. math. {\bf 147} (2002), 1-241.

\bibitem{langlandsES} R. Langlands, \emph{On the functional equations satisfied by Eisenstein series},
Lecture notes in Math. {\bf 544}, Springer, 1976.

\bibitem{langlandscorvalis} \rule{3em}{.5pt}\thinspace, \emph{On the notion of an automorphic representation}, Proc. Symp. Pure Math., {\bf 33}, part 1 (1979), 203-207.

\bibitem{LaMuTa2004} E. Lapid, G. Mui\'c and M. Tadi\'c, \emph{On the generic unitary dual of quasisplit classical groups}, Internat. Math. Res. Not. {\bf 26} (2004), 1335-1354.

\bibitem{li1992} J.-S. Li, \emph{Some results on the unramified principal series of $p$-adic groups}, Math. Ann. {\bf 292} (1992), 747-761.

\bibitem{lomeli2009} L. Lomel\'i, \emph{Functoriality for the classical groups over function fields}, International Mathematics Research Notices {\bf 22} (2009), 4271-4335.

\bibitem{ls} \rule{3em}{.5pt}\thinspace, \emph{The $\mathcal{LS}$ method for the classical groups in positive characteristic and the Riemann Hypothesis}, American Journal of Mathematics {\bf 137} (2015), 473-496.

\bibitem{lomeliSiegel} \rule{3em}{.5pt}\thinspace, \emph{On automorphic $L$-functions in positive characteristic}, Ann. Inst. Fourier {\bf 66} (2016), p. 1733-1771.

\bibitem{lsU} \rule{3em}{.5pt}\thinspace, \emph{The Langlands-Shahidi method over function fields: Ramanujan conjecture and Riemann Hypothesis for the unitary groups}, \texttt{http://arxiv.org/abs/1507.03625}

\bibitem{mw1994} C. M\oe glin and J.-L. Waldspurger, \emph{D\'ecomposition spectrale et s\'eries d'Eisenstein: une paraphrase de l'\'ecriture}, Progress in Math. {\bf 113}, Birkh\"auser (1994).

\bibitem{morris1982} L. E. Morris, \emph{Eisenstein series for reductive groups over global function fields I, II},
Can. J. Math. {\bf 34} (1982), 91-168 and 1112-1182.

\bibitem{MoPr1994} A. Moy and G. Prasad, \emph{Unrefined minimal $K$-types for $p$-adic groups}, Invent. Math. {\bf 116} (1994), 393-408.

\bibitem{shahidi1978} F. Shahidi, \emph{Functional equation satisfied by certain $L$-functions}, Comp. Math. {\bf 37} (1978), 171-207.

\bibitem{shahidi1990} \rule{3em}{.5pt}, \emph{A proof of Langlands' conjecture for Plancherel measures; complementary series of $\mathfrak{p}$-adic groups}, Ann. Math. {\bf 132} (1990), 273-330.

\bibitem{shahidi2000} \rule{3em}{.5pt}, \emph{Twists of a general class of $L$-functions by highly ramified characters}, Canad. Math. Bull. {\bf 43} (2000), 380-384.

\bibitem{shalika1974} J. A. Shalika, \emph{The multiplicity one theorem for ${\rm GL}_n$}, Ann. of Math. {\bf 100} (1974), 171-193.

\bibitem{takeda2014} S. Takeda, \emph{The twisted symmetric square $L$-function of ${\rm GL}(r)$}, Duke Math. J. {\bf 163} (2014), 175-266.

\bibitem{Wa2003} J.-L. Waldspurger, {\it La formule de Plancherel pour les groupes $\mathfrak{p}$-adiques: d'apr\`es Harish-Chandra}, J. Inst. Math. Jussieu {\bf  2} (2003), 235-333.

\end{thebibliography}
\end{document}